\documentclass[12pt]{amsart}
\usepackage[pagewise]{lineno}
\usepackage{amsfonts,latexsym,rawfonts,amsmath,amssymb,amsthm,mathrsfs}
\usepackage[plainpages=false]{hyperref}
\usepackage{graphicx}
\usepackage{comment}
\usepackage[pagewise]{lineno}
\usepackage{xcolor}

\numberwithin{equation}{section}
\renewcommand{\theequation}{\arabic{section}.\arabic{equation}}
\RequirePackage{color}
\textwidth = 6.2 in
\textheight = 8.25 in
\oddsidemargin = 0.25 in
\evensidemargin = 0.25 in
\voffset=-20pt
\pagestyle{plain}

\theoremstyle{plain}
\newtheorem{theorem}{Theorem}[section]
\newtheorem{lem}[theorem]{Lemma}
\theoremstyle{Corollary}
\newtheorem{cor}[theorem]{Corollary}

\newtheorem{remark}{Remark}[section]
\newcommand{\beq}{\begin{equation}}
	\newcommand{\eeq}{\end{equation}}
\newcommand{\beqs}{\begin{eqnarray*}}
	\newcommand{\eeqs}{\end{eqnarray*}}
\newcommand{\beqn}{\begin{eqnarray}}
	\newcommand{\eeqn}{\end{eqnarray}}
\newcommand{\beqa}{\begin{array}}
	\newcommand{\eeqa}{\end{array}}

\def\S{\mathbb S}
\def\R{\mathbb R}
\def\tR{\mathcal R}
\def\E{\mathscr E}
\def\C{\mathscr C}
\def\L{\mathscr L}
\def\B{\mathscr B}
\def\A{\mathscr A}
\def\tC{\mathcal C}
\def\M{\mathcal M}
\def\N{\mathcal N}
\def\G{\mathcal G}
\def\J{\mathcal J}
\def\I{\mathcal I}
\def\K{\mathcal K}
\def\O{\mathcal O}
\def\tS{\mathcal S}
\def\p{\partial}
\def\D{\nabla}
\def\bD{\overline \nabla}
\def\vol{\text{Vol}}
\def\area{\text{Area}}
\def\diam{\text{diam}}
\def\dist{\text{dist}}
\def\const{\text{const.}}
\def\vrho{\varrho}
\def\ve{\varepsilon}
\def\eps{\epsilon}
\def\graph{\text{graph}}
\def\h{\widetilde h}
\def\F{\widetilde F}
\def\lan{\langle}
\def\ran{\rangle}
\def\a{\widetilde a}
\def\vphi{\varphi}


\begin{document}

	\baselineskip16pt
	\parskip3pt
	
	\title{A flow approach to the Orlicz-Minkowski problem for torsional rigidity}
	
	\author{Weimin Sheng}
	\address{Weimin Sheng: School of Mathematical Sciences, Zhejiang University, Hangzhou 310058, China.}
	\email{weimins@zju.edu.cn}
	
	\author{Ke Xue}
	\address{Ke Xue: School of Mathematical Sciences, Zhejiang University, Hangzhou 310058, China.}
	\email{12235003@zju.edu.cn}
	
	\keywords{geometric flows, Monge-Amp\`ere equation, torsional measure, Orlicz-Minkowski problem.}
	\subjclass[2010]{35K96, 53C21, 52A39}
	\date{}
	
	\thanks{The authors were supported by NSFC, grant nos. 11971424 and 12031017.}
	
	\begin{abstract}
		
		In this paper the Orlicz-Minkowski problem for torsional rigidity, a generalization of the classical Minkowski problem, is studied. Using the flow method, we obtain a new existence result of solutions to this problem for general measures.
	\end{abstract}
	
	\maketitle

	\section{Introduction and overview of the main results} \label{introduction--1} \setcounter{equation}{0}
	
	During the past two decades it has been witnessed the great progress was achieved on the Minkowski type problems,
	and some new Minkowski type problems have been continuously emerged, for example, the
	$L_p$ Minkowski problem \cite{Luk93}, the Orlicz-Minkowski problem \cite{Luk10}, the dual Minkowski \cite{HLYZ16},
	the $L_p$ dual Minkowski \cite{LYZ18}, the dual Orlicz-Minkowski problems \cite{GHWXY18, GHXY18, SY20, ZXY18}, and the
	($L_p$ and Orlicz) Aleksandrov problem \cite{YH21, HLYZ18} etc.. There is a growing body researches addressed, we only name a few as follows \cite{BF19, BHP18, BLDZ13, BLYZZ17, BIS16, BIS21, CHZ18, CL19, CLZ17, CLZ19,  ChWang00, ChWang06, HZ18, HLYZ05, LSW16, LW13, LO95, Zhao17, Zhao18} etc.. More can be found in the references therein, see \cite{HLYZ16, LSYY21, SY20} for instance.
	These Minkowski type problems can be reformulated by Monge-Amp\`{e}re
	type equations (assuming smooth enough) and hence largely enrich the theory of the
	fully nonlinear PDEs. On the other hand, these Minkowski type problems greatly push the development of the Brunn-Minkowski theory of convex bodies forward. 
	For example, 
	the development of the Orlicz-Brunn-Minkowski theory owes greatly to the study of the Orlicz-Minkowski problem \cite{Luk10}.
	
	Furthermore, the Orlicz-Minkowski problem for other Borel measures, for example, the capacity and the torsional rigidity etc., with respect to the solution of a boundary-value problem
	has also been studied extensively, see \cite{CZ21, Colesanti15, Jerison96, KT21} etc. for the capacity and \cite{ChenDai20, Colesanti10, HuLiu21, HuLiu22, LiZhu20} etc.  for the torsional rigidity. The Orlicz-Minkowski problem for torsional
	rigidity was introduced by Li-Zhu \cite{LiZhu20}. Recall that the torsional rigidity of a convex body in $\R^n$ is described by Colesanti \cite{Colesanti05}
	\begin{equation}\label{def1}
		\frac{1}{T(\Omega)}=\inf\Big\{\frac{\int_{\Omega}|\overline{\nabla} U|^2dX}{(\int_{\Omega}|U|dX)^2}, U\in W_0^{1,2}(\Omega) : \int_{\Omega}|U|dX>0\Big\},
	\end{equation}
	where $\overline{\nabla}$ is the gradient in $\R^n$. It has been shown in \cite{Colesanti10} that, there exists a unique function $U$ such that
	
	\begin{equation}\label{TRV}
		T(\Omega)=\int_{\Omega}|\overline{\nabla} U|^2dX,
	\end{equation}
	where $U$ satisfies the boundary-value problem
	\begin{equation}\label{BVP}
		\left\{
		\begin{array}{ll}
			\Delta U(X)=-2, X\in \mathring{\Omega} ,\\\\
			U(X)=0,\quad   X\in \partial\Omega,
		\end{array}
		\right.
	\end{equation}
	for a convex body $\Omega$ we denote by $\mathring{\Omega}$ its interior and $\Delta$ the Lapalacian in $\mathbb{R}^n.$
	
	
	Colesanti and Fimiani in \cite{Colesanti10} built up the following variational formula of the torsional rigidity, for two arbitrary convex bodies $\Omega_0,\Omega_1$ in $\R^n$, 
	\begin{equation}\label{VFT}
		\frac{d}{dt}T(\Omega_0+t\Omega_1)|_{t=0^+}=\int_{\partial\Omega_0}h_{\Omega_1}(\nu_{\Omega_0}(X))|\overline{\nabla} U(X)|^2d\mathcal{H}^{n-1}(X)
	\end{equation}
	where $h_{\Omega_1}$ is the support function of $\Omega_1$, $\nu_{\Omega_0}$ is the Gauss mappign of $\partial\Omega_0$, $\mathcal{H}^{n-1}$ is the $(n-1)$-dimensional Hausdorff measure. By \eqref{VFT}, the torsional measure $\mu_{tor}(\Omega,\cdot)$ is defined on the unit sphere $\mathbb{S}^{n-1}$  by
	\begin{equation}\label{TRM}
		\mu_{tor}(\Omega,\eta)=\int_{\nu^{-1}_{\Omega}(\eta)}|\overline{\nabla} U(X)|^2d\mathcal{H}^{n-1}(X)=\int_{\eta}|\overline{\nabla} U(\nu^{-1}_{\Omega}(x))|^2dS_{\Omega}(x),
	\end{equation}
	for every Borel subset $\eta$ of $\mathbb{S}^{n-1}$. 
	Here $S_{\Omega}$ is the surface area measure of $\mathbb{S}^{n-1}$ and $\nu^{-1}_{\Omega}(x)$ is
	the inverse of the Gauss map on $\mathbb{S}^{n-1}$.
	Furthermore, Dahlberg \cite{Dahlberg77} revealed that $\overline{\nabla} U$
	has finite non-tangential limit
	$\mathcal{H}^{n-1}$ a.e. on $\partial\Omega$
	and $|\overline{\nabla} U|\in L^2(\partial\Omega,\mathcal{H}^{n-1}) $ without
	smoothness assumption on $\Omega$. Hence, \eqref{TRM} is well-defined on the unit sphere $\mathbb{S}^{n-1}$.
	
	Colesanti and Fimiani \cite{Colesanti10} first proposed the Minkowski problem for torsional rigidity: {\it{
			Given a finite Borel measure $\mu$ on the unit sphere $\mathbb{S}^{n-1}$,
			under what the necessary and sufficient conditions on $\mu$,
			does there exist a 
			convex body $\Omega$ of $\R^n$ such that $\mu=\mu_{tor}(\Omega,\cdot)$?}} They proved the existence and uniqueness up to translations of the solution
	via a variational argument which was first proposed by Aleksandrov \cite{Aleksandrov39, Aleksandrov38}.
	
	The $L_p$ version of Minkowski problem for torsional rigidity has
	been studied, see for instance, Chen-Dai \cite{ChenDai20} for $p > 1$ and Hu-Liu \cite{HuLiu21} for $0 < p < 1$.
	Recently, the $L_p$ Minkowski problem for torsional rigidity was extended to the
	Orlicz case involving non-homogeneous terms by Li-Zhu \cite{LiZhu20}, which asks: given a
	finite Borel measure $\mu$ on the unit sphere $\mathbb{S}^{n-1}$, what are the necessary and sufficient conditions on $\mu$ such that there exists a convex body $\Omega$ with support function $h$
	so that
	\begin{equation}\label{OMPT}
		hd\mu_{tor}(\Omega,\cdot) = \tau \,\psi(h)d\mu.
	\end{equation} 
	where $\psi\in\mathcal{A}$, the class of continuous functions $\psi: (0,\infty)\to (0,\infty)$ such that 
	\begin{itemize}
		\item[(i)]  $\Psi(s)=\int_0^s\frac{\psi(t)}{t}dt$ exists for all $s>0$;
		\item[(ii)]  $\lim_{s\rightarrow\infty}\Psi(s)=\infty$.
	\end{itemize}
	Hu, Liu and Ma \cite{HuLiu22} studied the existence of the Orlicz-Minkowski problem \eqref{OMPT} for even data. They proved the following:
	\begin{theorem}\label{HL}
		\cite{HuLiu22}	Suppose $\psi\in\mathcal{A}$. If $\mu$ is an even finite Borel measure on $\mathbb{S}^{n-1}$ which is not concentrated on any closed hemisphere of $\mathbb{S}^{n-1}$, then there exists an origin symmetric convex body $\Omega\in\mathcal{K}_e$
		such that \eqref{OMPT} holds.
	\end{theorem}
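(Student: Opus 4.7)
The plan is to construct an origin-symmetric convex body solving \eqref{OMPT} by flowing toward a stationary solution of a suitably normalized parabolic equation, and then to remove the smoothness assumption on $\mu$ by approximation. First, I approximate the even measure $\mu$ by a sequence $\mu_k = f_k\, d\sigma$, where each $f_k$ is a smooth, strictly positive, even function on $\mathbb{S}^{n-1}$, $\mu_k \to \mu$ weakly, and (thanks to the hypothesis on $\mu$) the $\mu_k$ are bounded in mass and not concentrated on any closed hemisphere. For each $k$, I evolve the support function $h(\cdot,t)$ of an origin-symmetric smooth strictly convex body $\Omega_t$ by
$$\partial_t h \;=\; -h \;+\; \tau(t)\,\frac{\psi(h)\,f_k}{|\overline{\nabla} U|^2\,\det(\nabla^2 h + h I)},$$
starting from a round ball. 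Here $|\overline{\nabla} U|^2$ is evaluated on $\partial\Omega_t$ and viewed as a function on $\mathbb{S}^{n-1}$ via $\nu_{\Omega_t}^{-1}$, and $\tau(t)$ is the global Lagrange multiplier that enforces $\tfrac{d}{dt}\int_{\mathbb{S}^{n-1}}\Psi(h)\,f_k\,d\sigma = 0$. Equilibria of the flow then satisfy the smooth Orlicz-Minkowski equation $h\,d\mu_{tor}(\Omega,\cdot) = \tau\,\psi(h) f_k\,d\sigma$ as desired.

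The a priori estimates form the core of the argument. Origin-symmetry and evenness of $f_k$ are preserved by the flow; combined with the conservation of $\int \Psi(h) f_k\,d\sigma$ and properties (i)--(ii) of $\Psi$, this yields uniform $C^0$ bounds on $h(\cdot,t)$: the lower bound is obtained by contradiction, using that $f_k$ is not concentrated on any closed hemisphere and that origin-symmetry forces the thinnest direction to have antipodal support, while the upper bound follows from $\lim_{s\to\infty}\Psi(s) = \infty$. The non-local factor $|\overline{\nabla} U|^2$ is controlled through Dahlberg's estimate together with classical regularity for \eqref{BVP} on a family of uniformly convex domains with uniform $C^2$-bounds. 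With these in place, applying the maximum principle to an auxiliary function of the principal radii gives uniform bounds on the Gauss curvature from above and below; Krylov's theorem and parabolic Schauder theory then supply $C^{m+\alpha}$-estimates to every order, securing long-time existence of the flow.

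For convergence as $t \to \infty$, I would exhibit a monotone Lyapunov functional, the natural choice being the torsional rigidity $T(\Omega_t)$ itself, which decreases under the flow while the Orlicz quantity $\int\Psi(h)f_k\,d\sigma$ is held fixed, by a computation analogous to the ones carried out for the classical and $L_p$ torsional Minkowski problems. Combined with the compactness from the $C^{m+\alpha}$-estimates, this yields, for each $k$, an origin-symmetric smooth convex body $\Omega^{(k)}$ with support function $h_k$ and a constant $\tau_k > 0$ satisfying $h_k\,d\mu_{tor}(\Omega^{(k)},\cdot) = \tau_k\,\psi(h_k)\,f_k\,d\sigma$, where $\tau_k$ remains uniformly bounded above and below in $k$. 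Finally, the uniform $C^0$-bounds in $k$, Blaschke selection, and weak continuity of the torsional measure under Hausdorff convergence of convex bodies allow one to pass to the limit and obtain an origin-symmetric $\Omega$ satisfying $h\,d\mu_{tor}(\Omega,\cdot) = \tau\,\psi(h)\,d\mu$. I expect the main obstacle to be the curvature ($C^2$) estimate along the flow, since the non-local coefficient $|\overline{\nabla} U|^2$ has to be bounded and differentiated on the evolving body; this will require stability estimates for the Dirichlet problem \eqref{BVP} as $\Omega_t$ deforms and a careful balancing of the behaviour of $|\overline{\nabla} U|^2$ near $\partial\Omega_t$ against the largest principal radius in the maximum-principle argument.
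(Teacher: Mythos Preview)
This theorem is not proved in the present paper: it is quoted from \cite{HuLiu22} as background. There is therefore no ``paper's own proof'' to compare against here; the closest in-paper analogue is Theorem~\ref{main4} (the even case for $\psi\in\mathcal{C}$), whose proof runs through the flow \eqref{SSF}/\eqref{uSSF} and Lemma~\ref{LogC0}.

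Your overall strategy---Gauss-curvature-type flow, $C^0$ estimates exploiting origin symmetry, $C^2$ estimates via maximum principle, Krylov--Safonov, subconvergence, then approximation in $\mu$---matches the architecture used both in \cite{HuLiu22} and in this paper. The substantive difference is that you \emph{reverse the roles of the two functionals}: you normalize so that $\int_{\mathbb{S}^{n-1}}\Psi(h)f_k\,d\sigma$ is conserved and claim $T(\Omega_t)$ is the monotone Lyapunov quantity, whereas the paper (and, from its title and structure, \cite{HuLiu22}) conserves $T(\Omega_t)$ along the flow (Lemma~\ref{Volume}) and shows $\mathcal J(h)=\int f\Psi(h)$ is nonincreasing (Lemma~\ref{mono}). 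A Cauchy--Schwarz computation does give $\tfrac{d}{dt}T\le 0$ for your flow, so the monotonicity claim is fine.

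The real issue is the $C^0$ \emph{lower} bound. In the paper's set-up the lower bound in the even case is immediate: $T(\Omega_t)=T(\Omega_0)>0$ together with $T(\Omega)\le \mathrm{diam}(\Omega)^2\,\mathrm{Vol}(\Omega)\le C\,w^-_{\Omega}$ forces $w^-_{\Omega_t}\ge c>0$, and origin symmetry turns this into $h\ge c/2$ (see Lemma~\ref{widthbound} and Lemma~\ref{LogC0}). With your normalization $T$ is only nonincreasing, so nothing prevents $T(\Omega_t)\to 0$ and the body from collapsing onto a hyperplane while keeping $\int\Psi(h)f_k$ fixed (make $h$ small near $\pm x_0$ and moderate elsewhere). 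Your sketched contradiction ``via non-concentration of $f_k$ on a hemisphere'' does not obviously close this gap: that hypothesis controls where $f_k$ puts mass, not the geometry of $\Omega_t$, and the conserved Orlicz integral alone does not force $\min h$ away from $0$. Likewise, a parabolic maximum principle at the spatial minimum of $h$ does not help for your equation, since at that point you only get $\det(\nabla^2 h+hI)\ge h^{n-1}$, which bounds the \emph{positive} reaction term from \emph{above}, not below. Unless you can supply an independent positive lower bound for $T(\Omega_t)$ (or switch to conserving $T$ as the paper does), the lower $C^0$ estimate---and with it the control of $|\overline\nabla U|$ on $\partial\Omega_t$ that your $C^2$ step relies on---is not established.
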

	Here $\mathcal{K}_e$ denotes the set of origin-symmetric convex bodies in $\mathbb{R}^n.$
	Li and Zhu \cite{LiZhu20} studied the general (not necessarily even) Orlicz-Minkowski problem and obtained the following:
	\begin{theorem}\label{LiZhu}
		\cite{LiZhu20}	Suppose $\psi\in\mathcal{A}$, and $s/\psi(s)$ tends to $+\infty$ as $s\rightarrow0^+$. If $\mu$ is a finite Borel measure on $\mathbb{S}^{n-1}$ which is not concentrated on any closed hemisphere of $\mathbb{S}^{n-1}$, then there exists a convex body $\Omega\in\mathcal{K}_0$
		such that \eqref{OMPT} holds.
	\end{theorem}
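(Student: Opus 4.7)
The plan is to prove existence by running a parabolic flow on support functions, combined with an approximation argument removing smoothness assumptions on $\mu$. First I would approximate $\mu$ by smooth positive densities $d\mu_k = f_k\,d\sigma$ with $\mu_k \rightharpoonup \mu$ weakly and each $\mu_k$ uniformly not concentrated on any closed hemisphere. For a smooth strictly convex $\Omega$, \eqref{OMPT} becomes, via $dS_\Omega = \det(\nabla^2 h + hI)\,d\sigma$ and \eqref{TRM}, the Monge-Amp\`ere type equation
\[
h\,|\overline{\nabla} U|^2 \det(\nabla^2 h + hI) = \tau\,\psi(h)\,f_k.
\]
Provided each smooth problem admits a solution $(\Omega_k,\tau_k)$ with $C^0$ bounds uniform in $k$, Blaschke selection together with the weak continuity of $\mu_{tor}$ (guaranteed by Dahlberg's $L^2$ trace estimate for $|\overline{\nabla} U|$ and the continuous dependence of \eqref{BVP} on $\Omega$) produces a limit body $\Omega \in \mathcal{K}_0$ and a constant $\tau>0$ satisfying \eqref{OMPT}.

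To solve each smooth problem, I would consider the normalized flow
\[
\partial_t h = \frac{\tau(t)\,\psi(h)\,f_k}{|\overline{\nabla} U|^2\det(\nabla^2 h + hI)} - h,\qquad h(\cdot,0)=h_0,
\]
starting from a smooth uniformly convex body, with $\tau(t)$ chosen so that the Orlicz functional $\mathcal{J}(t) := \int_{\mathbb{S}^{n-1}} \Psi(h(\cdot,t))\,f_k\,d\sigma$ is conserved. Stationary points of the flow are precisely smooth solutions of the equation. The general first variation $\frac{d}{dt} T(\Omega_t) = \int \partial_t h\,d\mu_{tor}(\Omega_t,\cdot)$ (an extension of \eqref{VFT}), combined with the identity $\int h\,d\mu_{tor} = (n+2)T(\Omega)$ (which follows from \eqref{VFT} with $\Omega_1 = \Omega_0$ and the $(n+2)$-homogeneity of $T$), and a Cauchy-Schwarz argument applied to the implicit formula for $\tau(t)$, shows that $T(\Omega_t)$ is monotone non-increasing along the flow.

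The heart of the proof is producing uniform a priori $C^0$, $C^1$, $C^2$ estimates. Conservation of $\mathcal{J}$ and monotonicity of $T(\Omega_t)$, together with property (ii) of the class $\mathcal{A}$ and the uniform non-concentration of $\mu_k$, yield an upper bound on $\max h$. The decisive difficulty is the \emph{lower} bound on $\min h$: the origin-symmetric trick underlying Theorem \ref{HL} is unavailable here, and the hypothesis $s/\psi(s) \to +\infty$ as $s \to 0^+$ is exactly what rescues the estimate. The intuition is that this condition forces $\psi$ to be sub-linear near $0$; at a minimum of $h$ one has $\det(\nabla^2 h + hI) \ge h^{n-1}$, and balancing this against the $\tau$-control provided by the monotone quantities prevents $\min h$ from collapsing to $0$. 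With $C^0$ in hand, Tso-type arguments bound the principal radii of curvature from above and below, Krylov-Evans theory supplies $C^{2,\alpha}$ regularity, and standard parabolic machinery gives long-time existence and subsequential convergence to a smooth stationary solution of \eqref{OMPT} for $\mu_k$.

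The main obstacle throughout is the uniform lower $C^0$ bound on $h$ without symmetry; the remaining steps — higher order estimates, long-time existence, and the final Hausdorff limit $k \to \infty$ — are structurally standard once this bound is secured.
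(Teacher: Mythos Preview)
This theorem is quoted from \cite{LiZhu20} and is not proved in the present paper; there is therefore no proof here to compare your attempt against. The paper uses a flow method only for its own results (Theorems~\ref{main1}, \ref{main3}, \ref{main4}), under different hypotheses.

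That said, your sketch has a genuine gap at the step you yourself flag as decisive. With your flow direction (curvature term positive, drift $-h$), the inequality $\det(\nabla^2 h+hI)\ge h^{n-1}$ at a spatial minimum gives only an \emph{upper} bound on $\partial_t h$, since $1/\det\le h^{1-n}$ sits in the numerator; this does not prevent $\min h$ from collapsing. If instead you reverse the sign (as the paper does for its own flow, conserving $T$ and making $\mathcal J$ monotone), the maximum-principle computation at the minimum yields
\[
\partial_t h(\bar x_t,t)\ \ge\ h\Big(\eta(t)-\frac{f}{|\overline\nabla U|^2}\cdot\frac{\psi(h)}{h^{n}}\Big),
\]
and positivity of the bracket for small $h$ requires $\psi(s)/s^{n}\to 0$, i.e.\ the stronger condition $\liminf_{s\to 0^+} s^n/\psi(s)=\infty$ used in Part~(i) of Theorem~\ref{main3}. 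The Li--Zhu hypothesis $s/\psi(s)\to\infty$ is strictly weaker (it only gives $\psi(s)=o(s)$), and is \emph{not} enough to close this barrier argument. Li--Zhu's original proof in \cite{LiZhu20} is variational rather than flow-based, and the lower bound there comes from a different mechanism; your outline does not supply a substitute for it.
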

	$\mathcal{K}_0$ denotes the set of convex bodies in $\mathbb{R}^n$ with the origin in their interior.
	When $\psi(s)=s^p$, we note that Theorem \ref{HL} includes the even $L_p$ torsional-Minkowski problem for $p >0$, and Theorem \ref{LiZhu} includes the general $L_p$ torsional-Minkowski problem for $p >1$. There is no result about  the general Orlicz-Minkowski problem which can include the general $L_p$ torsional-Minkowski problem for $0 <p <1$. Inspired by \cite{LSYY21}, in this paper, we shall fill this gap. In \cite{HuLiu22, LiZhu20}, the authors considered the convex body $\Omega\in\mathcal{K}_0$, 
	but the weak solution to \eqref{OMPT} may vanish somewhere, so the
	associated convex body may contain the origin on its boundary.
	Hence our primary goal is to extend the Orlicz-Minkowski problem to $\Omega\in\mathcal{K}$. We consider to study the extended set of $\psi$ in the following ways. 
	Let $\mathcal{B}$ be the class of continuous functions $\psi: [0,\infty)\to [0,\infty)$ such that 
	\begin{itemize}
		\item[(i)]  $\psi(s)\in \mathcal{A}$; 
		\item[(ii)]  $\psi(0)=\lim_{s\to 0^+}\psi(s)=0$.
	\end{itemize}
	
	We get the following theorem: 
	\begin{theorem}\label{main1}
		Suppose $\psi\in\mathcal{B}$. If $\mu$ is a finite Borel measure on $\mathbb{S}^{n-1}$ which is not concentrated on any closed hemisphere of $\mathbb{S}^{n-1}$, then there exists a convex body $\Omega\in\mathcal{K}$ such that \eqref{OMPT} holds.
	\end{theorem}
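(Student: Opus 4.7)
The plan is to combine a parabolic flow on the space of convex bodies (in the spirit of \cite{LSYY21}) with an approximation argument on the measure side. Since a direct PDE approach is unavailable for a general Borel $\mu$, I would first approximate $\mu$ by a sequence $d\mu_k = f_k\,dS$ with smooth, strictly positive densities $f_k$ on $\mathbb{S}^{n-1}$ (e.g.\ by mollifying $\mu+\frac{1}{k}dS$), arranged so that $\mu_k\to\mu$ weakly, $\int d\mu_k\to\int d\mu$, and no $\mu_k$ is concentrated on a closed hemisphere.

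For each smooth $f_k$ I would consider the flow of support functions
\begin{equation*}
\partial_t h(x,t)=\eta_k(t)\,\frac{\psi(h)\,f_k(x)}{|\overline{\nabla} U_{\Omega_t}(\nu_{\Omega_t}^{-1}(x))|^2\,\det(\nabla^2 h+hI)(x,t)}-h(x,t),
\end{equation*}
starting from a smooth origin-symmetric convex body, where the Lagrange multiplier $\eta_k(t)$ is chosen so that the Orlicz functional $\int_{\mathbb{S}^{n-1}}\Psi(h)\,d\mu_k$ is preserved. Its stationary equation is precisely $h\,|\overline{\nabla} U|^2\det(\nabla^2 h+hI)=\eta_k\,\psi(h)f_k$, that is, the smooth form of \eqref{OMPT} with $\tau=\eta_k$. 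The analytical program for this flow is standard in outline: (i) derive two-sided positive $C^0$ bounds for $h(\cdot,t)$ uniform in $t$ from the preserved functional (using $\Psi(s)\to\infty$) and the non-concentration of $\mu_k$; (ii) obtain $C^2$ bounds via maximum-principle arguments applied to the fully nonlinear evolution for $h$, with the non-local dependence on $U_{\Omega_t}$ controlled by Colesanti's Hadamard-type formula \eqref{VFT} and continuity of $|\overline{\nabla}U|$ under Hausdorff convergence of the body; (iii) invoke Krylov--Safonov and Schauder theory for long-time existence and extract a smooth positive limit $(h_k,\eta_k)$ as $t\to\infty$, which solves the $k$-th smooth Orlicz--Minkowski problem.

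In the final step I would send $k\to\infty$. A $k$-uniform upper bound on $\diam(\Omega_k)$ follows from non-concentration of $\mu$ by a standard Aleksandrov-type argument, while a $k$-uniform positive lower bound on $T(\Omega_k)$ (equivalently on $\int h_k\,d\mu_{tor}(\Omega_k,\cdot)$), obtained from the preserved Orlicz functional together with a Brunn--Minkowski-type inequality for torsional rigidity, rules out collapse to a point. Blaschke selection then yields $\Omega_k\to\Omega\in\mathcal{K}$ with $\eta_k\to\tau>0$, and the weak continuity of the torsional measure under Hausdorff convergence (\cite{Colesanti10}) allows passage to the limit in $h_k\,d\mu_{tor}(\Omega_k,\cdot)=\eta_k\,\psi(h_k)\,d\mu_k$ to produce \eqref{OMPT}.

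The essential difficulty, and the novelty over Theorem \ref{LiZhu}, lies in this last step: because $\psi\in\mathcal{B}$ only imposes $\psi(0)=0$ rather than $s/\psi(s)\to\infty$ as $s\to 0^+$, one cannot expect a uniform positive lower bound for $\min h_k$, so the limit body may only belong to $\mathcal{K}$ (with the origin possibly on $\partial\Omega$) and $h$ may vanish on a nontrivial Borel subset of $\mathbb{S}^{n-1}$. Making sense of \eqref{OMPT} under this degeneration and justifying the limit is the main obstacle; the saving grace is exactly the condition $\psi(0)=0$, which forces the right-hand side to vanish wherever $h$ does, so that the identity continues to hold in the natural weak sense on all of $\mathbb{S}^{n-1}$.
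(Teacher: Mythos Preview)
Your overall architecture (flow for smooth densities, then weak approximation in $\mu$) matches the paper, and your final-step discussion of why the limit body may only lie in $\mathcal{K}$ is on target. However, there is a genuine gap in step (i) of your flow program: you assert a two-sided positive $C^0$ bound on $h(\cdot,t)$ ``from the preserved functional (using $\Psi(s)\to\infty$) and the non-concentration of $\mu_k$,'' but these ingredients only yield the \emph{upper} bound. For the lower bound, note that $\Psi(0)=0$ when $\psi\in\mathcal{B}$, so a fixed value of $\int\Psi(h)\,d\mu_k$ does not prevent $\min_{\mathbb{S}^{n-1}}h(\cdot,t)$ from collapsing to zero; and a bound on torsional rigidity or minimum width (which you could get from monotonicity) controls only $w_{\Omega_t}^-$, not $\min h$, since the origin may drift toward $\partial\Omega_t$. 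At a spatial minimum one has $\det(\nabla^2 h+hI)\ge h^{n-1}$, so the maximum-principle inequality at $\bar x_t$ reads (schematically) $\partial_t h\ge -C\,\psi(h)\,h^{1-n}+c\,h$, which forces $h$ upward only if $s^n/\psi(s)\to\infty$ as $s\to 0^+$. For general $\psi\in\mathcal{B}$ this fails, and without a positive lower bound on $h$ the flow is not uniformly parabolic, so your steps (ii)--(iii) cannot even get started.

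The paper's remedy, which your proposal is missing, is to replace $\psi$ by a smooth $\widehat{\psi}_\varepsilon$ equal to $\psi$ on $[2\varepsilon,\infty)$ but equal to $s^{n+\varepsilon}$ on $[0,\varepsilon]$; the modified flow then satisfies $s^n/\widehat{\psi}_\varepsilon(s)=s^{-\varepsilon}\to\infty$, so the maximum principle gives an $\varepsilon$-dependent lower bound $\min h\ge 1/C_\varepsilon$ and the flow converges to a smooth solution of the $\widehat{\psi}_\varepsilon$-problem. The crucial point is that the \emph{width} bounds and the bounds on $\eta_\varepsilon$ are uniform in $\varepsilon$, so one can let $\varepsilon\to 0$ (before or together with $k\to\infty$) and recover a solution for $\psi$ itself, possibly with the origin on $\partial\Omega$. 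A secondary difference: the paper's flow keeps $T(\Omega_t)$ fixed and has $\mathcal{J}(h)=\int f\Psi(h)\,dx$ monotone decreasing, the reverse of your choice; this is harmless in itself, but the $\widehat{\psi}_\varepsilon$ device is the essential missing idea.
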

	We would like to mention that Theorem \ref{main1} holds for $\psi=s^p$ with $p>0$. The proof of Theorem \ref{main1} is based on the study of a suitably designed parabolic flow  and the use of approximation argument. The idea of using the parabolic flow comes from the fact that Problem \eqref{OMPT} can be rewritten as a Monge-Amp\`ere type equation on $\mathbb{S}^{n-1}$. Assume the pregiven measure $\mu$ has a density $f$ with respect to $\,dx$, \eqref{OMPT}  reduces to solve the following Monge-Amp\`ere type equation on $\mathbb{S}^{n-1}$:
	\begin{eqnarray}\label{elliptic}\ \ \ \ \ \ \ \ 
		h|\overline{\nabla} U(hx+\nabla h)|^2\det(\nabla^2 h+hI)=\gamma f(x){\psi(h)},
	\end{eqnarray}
	where $\nabla$ and $\nabla^2$ are the gradient and Hessian operators with respect to an orthonormal frame on $\mathbb{S}^{n-1}$, $\gamma>0$ is a constant, $I$ is the identity matrix.
	
	Let  $f:\mathbb{S}^{n-1}\rightarrow (0,\infty)$ be  a smooth positive function, and $\Omega_0\in \mathcal{K}_0$ be  a convex body such that  $\mathcal{M}_0=\partial\Omega_0$ is a smooth and uniformly convex hypersurface. We consider the following  curvature flow,
	\begin{equation}\label{SSF}
		\left\{
		\begin{array}{ll}
			\frac{\partial{X}}{\partial{t}} (x,t)&=\left(-f(\nu){\psi} (h)|\overline{\nabla} U(hx+\nabla h,t)|^{-2}K+\eta(t)h\right)\nu ,\\\\
			X(x,0)&=X_0(x),
		\end{array}
		\right.
	\end{equation}
	where $X(\cdot,t): \mathbb{S}^{n-1}\to\mathbb{R}^{n}$ is  the embedding that parameterizes a family of convex hypersurfaces $\mathcal{M}_t$ (in particular, $X_0$ is the parametrization of $\mathcal{M}_0$), 
	$\Omega_t$ is the convex body inclosed by $\mathcal{M}_t$, $U(\cdot,t)$ is the solution of \eqref{BVP} in $\Omega_t$, $K$ denotes the Gauss curvature of $ \Omega_t$ at $X(x,t)$, $\nu$ denotes the unit outer normal of $ \Omega_t$ at $X(x,t)$, $h$ is the support function of  $ \Omega_t$, 
	and
	\begin{eqnarray}\label{eta def}
		\eta(t)=\frac{\int_{\mathbb{S}^{n-1}}f {\psi}(h) dx}{(n+2)T(\Omega_t)}.
	\end{eqnarray} 
	
	Assume the functions $f$ and $\psi$ are smooth enough. We show that,  under the condition that  
	\begin{equation}\label{good-condition-1}
			\liminf_{s\to0^+}\frac{s^n}{\psi(s)}=\infty,
		\end{equation}   
		the flow \eqref{SSF} deforms a smooth and uniformly convex hypersurface to a limit hypersurface satisfying \eqref{elliptic}. The key ingredient for such convergence is to establish a priori estimates; the $C^0$ estimate in this case can be  obtained by the maximum principle. However, when the condition  \eqref{good-condition-1} does not hold, i.e.
		\begin{equation}\label{good-condition-2}  
			\liminf_{s\to 0^+}\frac{s^n}{\psi(s)}< \infty, 
		\end{equation}   
		the $C^0$ estimate cannot be obtained by the maximum principle directly. In order to overcome this obstruction, we study a more careful designed flow replacing \eqref{SSF} with the function $\psi\in\mathcal{B}$ replaced by a smooth function $\widehat{\psi}_\varepsilon: [0,\infty)\rightarrow [0, \infty)$ defined as follows:
		\begin{equation}\label{hatpsi eps}
			\widehat{\psi}_{\varepsilon}(s)=\left\{
			\begin{array}{ll}
				\psi(s), &\textrm{if $s\ge2\varepsilon$},\\\\
				s^{n+\varepsilon},&\textrm{if $0\le s\le\varepsilon$},
			\end{array}
			\right.
		\end{equation} and $\widehat{\psi}_\varepsilon(s)\leq C_0$ for  $s\in(\varepsilon, 2\varepsilon)$ is chosen  so that $\widehat{\psi}_{\varepsilon}$ is  smooth on $[0, \infty)$ and $\widehat{\psi}_\varepsilon(s)>0$ for all $s\in(0,\infty)$, where $\varepsilon\in(0,1)$ is a small number. Hereafter, 
		\begin{eqnarray} \label{constant-c-0} 
				C_0=\max\{1, \max_{s\in[0,2] }\psi(s)\}.
		\end{eqnarray}
		In fact, we study the following curvature flow: 
		\begin{equation}\label{SF2}
			\left\{
			\begin{array}{ll}
				\frac{\partial{X_{\varepsilon}}}{\partial{t}} (x,t)&=\left(-f(\nu)\widehat{\psi}_\varepsilon(h_{\varepsilon})|\overline{\nabla} U_\varepsilon(h_{\varepsilon}x+\nabla h_{\varepsilon},t)|^{-2} K+\eta_\varepsilon(t)h_{\varepsilon}\right)\nu ,\\\\
				X_{\varepsilon}(x,0)&=X_0(x),
			\end{array}
			\right.
		\end{equation} 
		where $X_\varepsilon(\cdot,t):  \mathbb{S}^{n-1}\to\mathbb{R}^{n}$ parameterizes convex hypersurface
		$\mathcal{M}_t^\varepsilon$, 
		$h_{\varepsilon}$ denotes the support function of the convex body $\Omega^{\varepsilon}_t$
		inclosed by $\mathcal{M}_t^\varepsilon$, $U_\varepsilon(\cdot,t)$ is the solution of \eqref{BVP} in $\Omega^{\varepsilon}_t$ and 
		\begin{eqnarray}\label{eta epi def}
			\eta_\varepsilon(t)=\frac{\int_{\S^{n-1}}f\widehat{\psi}_\varepsilon(h) dx}{(n+2)T(\Omega^{\varepsilon}_t)}.
		\end{eqnarray}
		
		We will show  that  $h_\varepsilon(\cdot,t)$ is uniformly bounded from above in Lemma \ref{upperbound}. A positive uniform lower bound estimate for  $h_\varepsilon(\cdot,t)$ will be proved in Lemma \ref{lowerbound}, and this argument relies on the construction for $\widehat{\psi}_\varepsilon$ in \eqref{hatpsi eps}. By the $C^0$ estimates we further obtain the higher order estimates and therefore show that flow \eqref{SF2} exists for all time. These, together with Lemma \ref{monotone1},  imply the existence of a sequence of times $t_i\to\infty$ such that $h_\varepsilon(\cdot,t_i)$ converges to a positive and uniformly convex function $h_{\varepsilon,\infty}\in C^\infty(\mathbb{S}^n)$ solving the equation below
		\begin{eqnarray}\label{elliptic eq}\ \ \ \ \ \ \ \ 
			h|\overline{\nabla} U_\varepsilon(hx+\nabla h)|^2\det(\nabla^2h+hI)=\gamma_\varepsilon f(x){\widehat{\psi}_\varepsilon(h)},
		\end{eqnarray}
		where $\gamma_\varepsilon>0$ is a constant and $U_\varepsilon(\cdot)$ is the solution of \eqref{BVP} in $\Omega_{\varepsilon,\infty}$. Furthermore, we prove that there is a sequence of $\varepsilon_i\to0$ such that $h_{\varepsilon_i,\infty}$ locally uniformly converges to a weak solution of \eqref{elliptic}. Throughout this paper, we say that $h\in C^2(\mathbb{S}^{n-1})$, the set of function on $\mathbb{S}^{n-1}$ with continuous second order derivative, is uniformly convex if the matrix $\nabla^2h+hI$ is positively definite. Thus, the following theorem can be obtained, which provides solutions to \eqref{elliptic} 
		when $d\mu=fd\sigma_{\mathbb{S}^{n-1}}$.

		\begin{theorem}\label{main3}  Let  $\psi \in \mathcal{B}$ be a smooth function. Suppose that $\,d\mu(x)=f(x)\,dx$ with smooth and positive function $f$ on $\mathbb{S}^{n-1}$.
			The following statements hold:\\
			$(\mathrm{i})$ If $\psi$ satisfies \eqref{good-condition-1}, then one can find an $\Omega\in\mathcal{K}_0$  such that \eqref{elliptic};\\
			$(\mathrm{ii})$ If $\psi$ does not satisfy \eqref{good-condition-1}, i.e. it satisfies  \eqref{good-condition-2},  then one can find an  $\Omega\in\mathcal{K}$ such that \eqref{OMPT}.
		\end{theorem}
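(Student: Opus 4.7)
The plan is to prove (i) by analyzing the long-time behavior of the curvature flow \eqref{SSF} directly, and to prove (ii) by applying (i) to the regularized function $\widehat{\psi}_\varepsilon$ and then sending $\varepsilon\to 0$.

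For part (i), the first step is to exhibit a Lyapunov structure for the flow \eqref{SSF}. The choice of $\eta(t)$ in \eqref{eta def} makes the torsional rigidity $T(\Omega_t)$ conserved: differentiating via \eqref{VFT} gives $\frac{d}{dt}T(\Omega_t)=-\int_{\mathbb{S}^{n-1}}f\psi(h)\,dx+\eta(t)\int_{\mathbb{S}^{n-1}}h\,d\mu_{tor}(\Omega_t,\cdot)$, and the Euler-type identity $\int_{\mathbb{S}^{n-1}}h\,d\mu_{tor}(\Omega_t,\cdot)=(n+2)T(\Omega_t)$ (from \eqref{VFT} with $\Omega_1=\Omega_0$) makes this vanish. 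The energy $\mathcal{J}(t)=\int_{\mathbb{S}^{n-1}} f\Psi(h)\,dx$ is then non-increasing along the flow, by a Cauchy--Schwarz bound applied to $\int f\psi(h)\,dx$ against the measure $d\mu_{tor}(\Omega_t,\cdot)/h$; this is the content referenced as Lemma \ref{monotone1}. Next I would establish uniform two-sided $C^0$ bounds on $h$: the upper bound follows from conservation of $T(\Omega_t)$ together with the monotonicity of $\mathcal{J}$ and the non-degeneracy hypothesis on $\mu$; for the lower bound, condition \eqref{good-condition-1} is used to exclude $h_{\min}\to 0$, since on the support-function side the Monge--Amp\`ere factor $\det(\nabla^2 h+hI)$ would then be forced to be comparable to $\psi(h)/h^n$, in contradiction with the preserved value of $T(\Omega_t)$ and the isoperimetric-type relation between volume and torsional rigidity. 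From the $C^0$ bounds, higher-order a priori estimates on the principal curvatures and their reciprocals follow by the standard Krylov--Safonov and Evans--Krylov machinery applied to the parabolic Monge--Amp\`ere equation satisfied by $h$, using Dahlberg's regularity of $\overline\nabla U$ on $\partial\Omega_t$. Long-time existence follows, and $\mathcal{J}(t)$ being monotone and bounded yields a time sequence $t_i\to\infty$ along which $h(\cdot,t_i)\to h_\infty\in C^\infty(\mathbb{S}^{n-1})$ solving \eqref{elliptic}, with the associated body in $\mathcal{K}_0$.

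For part (ii), I would apply part (i) to $\widehat{\psi}_\varepsilon$: by construction $\widehat{\psi}_\varepsilon(s)=s^{n+\varepsilon}$ near $s=0$, so \eqref{good-condition-1} holds (with an $\varepsilon$-dependent constant), and (i) produces for each $\varepsilon\in(0,1)$ a body $\Omega_\varepsilon\in\mathcal{K}_0$ with smooth support function $h_{\varepsilon,\infty}$ satisfying \eqref{elliptic eq} with some constant $\gamma_\varepsilon$. The key additional task is to obtain uniform-in-$\varepsilon$ control: an upper bound on $\|h_{\varepsilon,\infty}\|_\infty$ (Lemma \ref{upperbound}) from conservation of $T$ along the $\varepsilon$-flow plus the non-degeneracy of $\mu$, and an upper bound on $\gamma_\varepsilon$ by testing \eqref{elliptic eq} at a maximum point of $h_{\varepsilon,\infty}$. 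The Blaschke selection theorem then extracts a subsequence $h_{\varepsilon_i,\infty}\to h_\infty$ uniformly on $\mathbb{S}^{n-1}$; $h_\infty$ is the support function of a convex body $\Omega\in\mathcal{K}$ (possibly with $0\in\partial\Omega$). That $\Omega$ solves \eqref{OMPT} weakly is obtained by passing to the limit in the integrated identity $\int_{\mathbb{S}^{n-1}}\phi\,h_{\varepsilon_i,\infty}\,d\mu_{tor}(\Omega_{\varepsilon_i},\cdot)=\gamma_{\varepsilon_i}\int_{\mathbb{S}^{n-1}}\phi\,\widehat{\psi}_{\varepsilon_i}(h_{\varepsilon_i,\infty})f\,dx$ for continuous test functions $\phi$, using weak continuity of the torsional measure under Hausdorff convergence of convex bodies together with $\widehat{\psi}_\varepsilon\to\psi$ locally uniformly on $(0,\infty)$ and $\psi(0)=0$.

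The main obstacle is the uniform-in-time $C^0$ lower bound on the support function, and this is precisely why condition \eqref{good-condition-1} has to be imposed on $\psi$ in part (i) and on $\widehat{\psi}_\varepsilon$ in part (ii); the degeneracy of the weight $|\overline\nabla U|^{-2}$ as $\Omega_t$ thins out can only be controlled when $\psi$ vanishes faster than $s^n$ at the origin, which explains the specific choice $s^{n+\varepsilon}$ in \eqref{hatpsi eps} rather than a plain truncation. A secondary delicate point in part (ii) is verifying that the limit body $\Omega$ has nonempty interior, so that the torsional measure $\mu_{tor}(\Omega,\cdot)$ is defined and \eqref{OMPT} makes sense; this amounts to ruling out collapse of $\Omega_{\varepsilon_i}$ to a lower-dimensional set, which is achieved through the uniform bound on $\gamma_\varepsilon$ combined with the fact that $\mu$ is not concentrated on a closed hemisphere.
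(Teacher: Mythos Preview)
Your overall strategy matches the paper's: for (i), run the flow \eqref{SSF}, use conservation of $T(\Omega_t)$ and monotonicity of $\mathcal J$ to get $C^0$ bounds, then upgrade to $C^2$ and extract a subsequential limit; for (ii), replace $\psi$ by $\widehat\psi_\varepsilon$ (which does satisfy \eqref{good-condition-1}), obtain solutions $\Omega_{\varepsilon,\infty}$, and pass to the limit $\varepsilon\to 0$ using Blaschke selection and weak continuity of the torsional measure. This is exactly what the paper does (the paper proves (ii) first and presents it as running the flow \eqref{SF2}, but that is literally flow \eqref{SSF} applied with $\widehat\psi_\varepsilon$).

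There are, however, two mechanisms you describe incorrectly. First, the $C^0$ lower bound in (i) is \emph{not} obtained by a contradiction with the preserved value of $T(\Omega_t)$; it is a direct parabolic maximum-principle argument at a spatial minimum $\bar x_t$ of $h(\cdot,t)$: there $\nabla^2 h\geq 0$ forces $K\leq h^{1-n}$, and inserting this into \eqref{uSSF} gives $\partial_t h(\bar x_t,t)\geq h\eta(t)\big(1-\tfrac{f}{\eta\,|\overline\nabla U|^2}\cdot \tfrac{\psi(h)}{h^n}\big)$, which is nonnegative once $h$ is small, precisely because $s^n/\psi(s)\to\infty$. Second, this argument (and the $C^2$ estimates) requires a uniform \emph{positive lower bound} on $|\overline\nabla U|$ along $\partial\Omega_t$, which you do not mention and which is not a consequence of Dahlberg's result. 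The paper obtains it (Remark after Lemma \ref{widthbound}) from the uniform width bound: an inscribed ball of fixed radius touches $\partial\Omega_t$ at each boundary point, and comparison with the explicit torsion function on that ball plus the Hopf lemma gives $|\overline\nabla U|\geq \widehat c>0$. Your phrase ``degeneracy of the weight $|\overline\nabla U|^{-2}$ as $\Omega_t$ thins out'' suggests you expect this weight to blow up, but in fact the width lower bound prevents that entirely.

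Finally, your non-collapse argument in (ii) is more circuitous than needed. The paper shows directly that the minimal width satisfies $w_{\Omega_t}^-\geq T(\Omega_0)/C$, using $T(\Omega_t)=T(\Omega_0)$ together with $T(\Omega)\leq \mathrm{diam}(\Omega)^2\,\mathrm{Vol}(\Omega)\leq C\, w_\Omega^-$; this bound is independent of $\varepsilon$ and immediately guarantees that the limit body has nonempty interior, without any appeal to the hemisphere condition on $\mu$ (which in this smooth-density setting is automatic anyway).
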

		
		For any general measure $\mu$,  there is a sequence of measures $\{\mu_i\}_{i\in \mathbb{N}}$,
		where $\,d\mu_i =f_i d\sigma_{\mathbb{S}^{n-1}}$ with $f_i$ being smooth and strictly positive on $\mathbb{S}^{n-1}$,
		such that $\mu_i$ converges to $\mu$ weakly.
		Theorem \ref{main1} is then proved by the virtue of Theorem \ref{main3} and an approximation argument.
		
		Next, we consider the Orlicz-Minkowski problem for torsional rigidity that includes the $L_p$ torsional Minkowski problem for $p=0$, which is called the logarithmic torsional Minkowski problem.
		Let $\mathcal{C}$ be the class of continuous functions $\psi: (0,\infty)\to (0,\infty)$ such that 
		\begin{itemize}
			\item[(i)]  $\Psi(s)=\int_1^s\frac{\psi(t)}{t}dt$ exists for all $s>0$;
			\item[(ii)]  $\lim_{s\rightarrow\infty}\Psi(s)=\infty$;
			\item[(iii)] For some $\widehat{C_1}>0$, we have $\int_{ \mathbb{S}^{n-1}}\Psi(|u\cdot x|)f(x)dx\geq-\widehat{C_1}, \quad  \forall u\in\mathbb{S}^{n-1}. $
		\end{itemize}
		We can get the following theorem:
		\begin{theorem}\label{main4}  Let  $\psi \in \mathcal{C}$ be a smooth function. Suppose that $\,d\mu(x)=f(x)\,dx$ with $f$ being even, smooth and strictly positive on $\mathbb{S}^{n-1}$, then there is a uniformly convex, even, smooth and positive solution to \eqref{elliptic}.
		\end{theorem}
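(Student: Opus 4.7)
My plan is to adapt the curvature-flow approach of Theorem \ref{main3} to the logarithmic setting of class $\mathcal{C}$, with the near-origin behavior of $\psi\in\mathcal{B}$ (used in Theorem \ref{main3}) replaced by the combination of evenness of $f$ and hypothesis (iii) of class $\mathcal{C}$. I would run the flow \eqref{SSF} with $\psi\in\mathcal{C}$, \emph{without} regularization, from a smooth, origin-symmetric, uniformly convex $\mathcal{M}_{0}$. Two properties are immediate: origin-symmetry of $\Omega_{t}$ is preserved because every ingredient of \eqref{SSF} is invariant under $x\mapsto -x$ when $f$ is even; and by \eqref{VFT} together with the identity $\int_{\partial\Omega_{t}}h|\overline{\nabla}U|^{2}\,d\mathcal{H}^{n-1}=(n+2)T(\Omega_{t})$, the choice \eqref{eta def} forces $T(\Omega_{t})\equiv T(\Omega_{0})$ throughout the flow.

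Next I introduce the functional $\mathcal{J}(t)=\int_{\mathbb{S}^{n-1}}\Psi(h(x,t))\,f(x)\,dx$. A direct computation using $\partial_{t}h=-f\psi(h)|\overline{\nabla}U|^{-2}K+\eta h$, followed by the Cauchy--Schwarz inequality
\[
\Bigl(\int_{\mathbb{S}^{n-1}}f\psi(h)\,dx\Bigr)^{2}\le\Bigl(\int_{\mathbb{S}^{n-1}}\frac{f^{2}\psi(h)^{2}K}{h|\overline{\nabla}U|^{2}}\,dx\Bigr)\Bigl(\int_{\mathbb{S}^{n-1}}\frac{h|\overline{\nabla}U|^{2}}{K}\,dx\Bigr)
\]
and the identity $(n+2)T=\int_{\mathbb{S}^{n-1}}h|\overline{\nabla}U|^{2}K^{-1}\,dx$, yields $\mathcal{J}'(t)\le 0$ with equality precisely at solutions of \eqref{elliptic}.

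The heart of the argument is the $C^{0}$ estimate. \textbf{Upper bound.} Let $R_{t}=\max_{x\in\mathbb{S}^{n-1}}h(x,t)$ be attained at some direction $u_{t}$; origin-symmetry gives $[-R_{t}u_{t},R_{t}u_{t}]\subset\Omega_{t}$, so $h(x,t)\ge R_{t}|x\cdot u_{t}|$ for all $x$, and monotonicity of $\Psi$ yields $\mathcal{J}(t)\ge\int_{\mathbb{S}^{n-1}}\Psi(R_{t}|x\cdot u_{t}|)f(x)\,dx$. Given $M>0$, pick $s_{M}$ with $\Psi(s)\ge M$ for $s\ge s_{M}$; on $\{|x\cdot u_{t}|\ge s_{M}/R_{t}\}$ the integrand is $\ge M$, while on the complementary equatorial strip, $\Psi(R_{t}|x\cdot u_{t}|)\ge\Psi(|x\cdot u_{t}|)$ (using $R_{t}\ge 1$), whose integral against $f$ over all of $\mathbb{S}^{n-1}$ is $\ge-\widehat{C_{1}}$ by hypothesis (iii). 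As $R_{t}\to\infty$ the first contribution tends to $M\|f\|_{L^{1}}$, so $\mathcal{J}\to\infty$, contradicting $\mathcal{J}(t)\le\mathcal{J}(0)$; hence $R_{t}\le R_{\infty}$ uniformly. \textbf{Lower bound.} The Saint-Venant inequality $T\le c_{n}|\Omega|^{(n+2)/n}$ combined with $T(\Omega_{t})=T_{0}$ gives $|\Omega_{t}|\ge c(T_{0})>0$; if $r_{t}=\min_{x}h(x,t)$ is attained at $v_{t}$, origin-symmetry places $\Omega_{t}\subset\{|y\cdot v_{t}|\le r_{t}\}\cap B_{R_{\infty}}$, so $|\Omega_{t}|\le C_{n}r_{t}R_{\infty}^{n-1}$, forcing $r_{t}\ge c(T_{0},R_{\infty})>0$.

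With uniform two-sided $C^{0}$ bounds $0<r_{\infty}\le h\le R_{\infty}$, the flow is uniformly parabolic, and $C^{2}$ estimates on $\nabla^{2}h+hI$ and its inverse follow from the maximum-principle arguments already used in the proof of Theorem \ref{main3}; Krylov's theorem and bootstrapping give $C^{\infty}$ bounds and long-time existence of a smooth, even, uniformly convex solution. Monotonicity and boundedness of $\mathcal{J}$ then force $\int_{0}^{\infty}|\mathcal{J}'(t)|\,dt<\infty$, so $\mathcal{J}'(t_{i})\to 0$ along some sequence $t_{i}\to\infty$; a $C^{\infty}$ subsequential limit $h_{\infty}$, together with the equality case of Cauchy--Schwarz, is a smooth, even, positive, uniformly convex solution of \eqref{elliptic} with $\gamma=\lim\eta(t_{i})$. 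The main obstacle I anticipate is the $C^{0}$ upper bound: hypothesis (iii) must be wielded precisely to overcome the potentially very negative contribution of $\Psi(R_{t}|x\cdot u_{t}|)$ on the equatorial strip $\{|x\cdot u_{t}|<s_{M}/R_{t}\}$, and the fact that (iii) holds \emph{uniformly} over every $u\in\mathbb{S}^{n-1}$ (not just a fixed reference direction) is what lets the argument close regardless of where $u_{t}$ drifts along the flow.
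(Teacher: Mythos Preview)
Your proposal is correct and follows essentially the same approach as the paper: run the unregularized flow \eqref{SSF} from an origin-symmetric initial body, use preservation of $T$ and monotonicity of $\mathcal{J}$, obtain the $C^{0}$ upper bound from $h(x,t)\ge R_t|x\cdot u_t|$ together with hypothesis (iii) of class $\mathcal{C}$, obtain the $C^{0}$ lower bound from $T(\Omega_t)=T(\Omega_0)$ plus origin symmetry, and then invoke the $C^{2}$ estimates and convergence argument exactly as in Part (i) of Theorem \ref{main3}. The only cosmetic difference is that for the lower bound the paper uses the elementary estimate $T(\Omega_t)\le\mathrm{diam}(\Omega_t)^{2}\,\mathrm{Vol}(\Omega_t)$ from Lemma \ref{Graes} rather than Saint-Venant, and for the upper bound it splits at the fixed threshold $|x\cdot x_t|=\tfrac12$ rather than your moving threshold $s_M/R_t$; both pairs of choices lead to the same conclusion.
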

		
		When $\psi(s)=1$, the Orlicz-Minkowski problem for torsional rigidity becomes the logarithmic torsional Minkowski problem, we claim that $\psi(s)=1\in \mathcal{C}$. Only need to show (iii) is true, to see this, we know $\int_{ \mathbb{S}^{n-1}}\log|x_i|du>-\infty$, where $(x_1,\cdot\cdot\cdot,x_n)$ denotes the Euclidean coordinates.
		The proof of the Theorem \ref{main4} relies on the flow \eqref{SSF}. Eveness and $\psi \in \mathcal{C}$ guarantee the $C^0$ estimate.

		This paper is organized as follows. In Section \ref{section-2},
		we recall some properties of convex hypersurfaces and torsional measure, and  present some properties of the flows \eqref{SSF} and \eqref{SF2}.
		We also show the monotonicity of functionals \eqref{function} and \eqref{functional} ( Lemmas \ref{mono} and \ref{monotone1}, respectively) and the preservation of $T(\cdot)$ along the flows ( Lemmas \ref{Volume} and \ref{monotone1}). 
		The $C^0$ estimates for the flows \eqref{SSF} and \eqref{SF2} when smooth functions $\psi$ satisfy the assumptions of Theorems \ref{main3}-\ref{main4} is established in Section \ref{section-3}.  
		Section \ref{section-4} dedicates to the proof of the long time existence of the flows \eqref{SSF} and \eqref{SF2} and the proof of Theorems \ref{main3}-\ref{main4}.  Moreover, the existence of solutions to Problem \ref{OMPT},  i.e., Theorem \ref{main1},  is proved by an approximation argument for general $\mu$ that is not concentrated on any closed hemisphere.
		Section \ref{section-5} collects the second derivative estimates which is used in the study of flows \eqref{SSF} and \eqref{SF2}.
		
		\section{Preliminary and properties of the flows}\label{section-2}

		\subsection{Convex Bodies}
		\label{Convex}
		{~}
		
		We say $\Omega\subset \mathbb{R}^{n}$ is a convex body if it is a compact convex set with nonempty interior. 
		Denote by $\mathcal{K}$ the set of all convex bodies in $\mathbb{R}^{n}$ containing the origin.
		Let $\mathcal{K}_0\subset \mathcal{K}$ be the set of all convex bodies with the origin in their interiors, $\mathcal{K}_e$ is the class of origin-symmetric convex bodies. For $\Omega\in\mathcal{K}$,  define its radial function $r_\Omega: \mathbb{S}^{n-1}\rightarrow [0, \infty)$ and support function $h_\Omega: \mathbb{S}^{n-1}\rightarrow [0, \infty)$, respectively, by  
		\begin{eqnarray}  \label{def u} \ \ \ \ 
			r_\Omega(x)=\max\{a \in \mathbb{R}: a x \in \Omega\} \ \ \mathrm{and} \ \ h_\Omega(x)=\max\{x\cdot y, \,y\in \Omega\}, \, \ \,x\in\mathbb{S}^{n-1}, 
		\end{eqnarray} where $x\cdot y$ denotes the inner product in $\mathbb{R}^{n}$. 
		
		For $\Omega\in\mathcal{K}$, let $\partial \Omega$ be its boundary. The Gauss map  of $\partial \Omega$, denoted by $\nu_{\Omega}: \partial \Omega\to\mathbb{S}^{n-1}$, is defined as follows: for $y\in \partial \Omega$, 
		\begin{eqnarray*}
			\nu_{\Omega}(y)=\{x\in\mathbb{S}^{n-1}: x\cdot y=h_\Omega(x)\}.
		\end{eqnarray*} Let $\nu_{\Omega}^{-1}: \mathbb{S}^{n-1}\rightarrow \partial \Omega$ be the reverse Gauss map such that 
		\begin{eqnarray*}
			\nu_{\Omega}^{-1}(x)=\{y\in\partial \Omega: x\cdot y=h_\Omega(x)\}, \ \ \ x\in \mathbb{S}^{n-1}.
		\end{eqnarray*}  Denote by $\alpha_\Omega: \mathbb{S}^{n-1}\rightarrow \mathbb{S}^{n-1}$   the radial Gauss image of $\Omega$. That is,  $$\alpha_{\Omega}(\xi)=\{x\in \mathbb{S}^{n-1}:  x\in \nu_{\Omega}(r_{\Omega}(\xi)\xi)\}, \ \ \xi\in \mathbb{S}^{n-1}. $$ Define $\alpha^*_{\Omega}: \mathbb{S}^{n-1}\rightarrow \mathbb{S}^{n-1}$, the reverse radial Gauss image of $\Omega$ as follows: for any Borel set $E\subset\mathbb{S}^{n-1}$,  
		\begin{equation}\label{rev-rad-gauss} \alpha^*_\Omega(E)=\{\xi\in\mathbb{S}^{n-1}: r_\Omega(\xi)\xi\in\nu_\Omega^{-1}(E)\}.\end{equation} 
		We often omit the subscript $\Omega$ in $r_{\Omega}$, $h_{\Omega}$, $\nu_{\Omega}$, $\nu_{\Omega}^{-1}$, $\alpha_{\Omega}$,  and $\alpha_{\Omega}^*$ if no confusion occurs.

		Let us recall some basic notations. In $\mathbb{R}^{n}$, sometimes, we also use $\langle x, y\rangle =x\cdot y$ to denote the inner product of $x, y\in \mathbb{R}^n$. $|x|$ means the Euclidean norm of $x\in \mathbb{R}^n$. The unit sphere $\S^{n-1}$ will be assumed to have a smooth local orthonormal frame field  $\{e_1, \cdots, e_{n-1}\}$. By $\nabla$ and $\nabla^2$, the gradient and Hessian operators with respect to $\{e_1, \cdots, e_{n-1}\}$ on $\S^{n-1}$. The surface area of $\S^{n-1}$ is denoted by $|\S^{n-1}|$.  
		
		Let  $\Omega\in \mathcal{K}$ be a convex body containing the origin. Denote by $w_\Omega:\S^{n-1}\to\mathbb{R}$  the width function of $\Omega$ which can be formulated by  \[w_\Omega(x)=h_\Omega(x)+h_\Omega(-x).\] We shall need $w_\Omega^+$ and $w_\Omega^-$,  the maximal and the  minimal width of $\Omega$, respectively, which can be formulated by
		\begin{eqnarray}\label{mmwide}
			w_\Omega^+=\max_{x\in\S^{n-1}}\{h_\Omega(x)+h_\Omega(-x)\}\ \ \mathrm{and}\ \ 
			w_\Omega^-=\min_{x\in\S^{n-1}}\{h_\Omega(x)+h_\Omega(-x)\}.
		\end{eqnarray} 
		The following result can be found in \cite{CL19} for instance.
		\begin{lem}\label{pro}
			Let $\Omega\in\mathcal{K}_0$ be a convex body containing the origin in its interior, $h_{\Omega}$ and $r_{\Omega}$ be the support and radial functions of $\Omega$, and $x_{max}\in \S^{n-1}$ and $\xi_{min}\in \S^{n-1}$ be such that $h_{\Omega}(x_{max})=\max_{x\in\S^{n-1}}h_{\Omega}(x)$ and $r_{\Omega}(\xi_{min})=\min_{\xi\in\S^{n-1}}r_{\Omega}(\xi)$. Then
			\begin{eqnarray*}
				\max_{x\in \S^{n-1}}h_{\Omega}(x)&=&\max_{\xi\in \S^{n-1}}r_{\Omega}(\xi)\ \ \mathrm{and}\ \ \min_{x\in\S^{n-1}}h_{\Omega}(x)=\min_{\xi\in \S^{n-1}}r_{\Omega}(\xi),\\
				h_{\Omega}(x)&\ge& \langle x,  x_{max}\rangle h_{\Omega}(x_{max}) \ \ \ \mathrm{for\ all}\ x\in\S^{n-1},\\
				r_{\Omega}(\xi) \langle\xi, \xi_{min}\rangle &\le&  r_{\Omega}(\xi_{min})\ \ \ \mathrm{for\ all}\ \xi\in\S^{n-1}.
			\end{eqnarray*}
		\end{lem}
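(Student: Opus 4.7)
The plan is to handle the three claims in order: the max/min identities first, then the two inequalities, since the inequalities depend on identifying $x_{max}$ and $\xi_{min}$ as outer normals, which in turn uses the identities.

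For $\max h_\Omega = \max r_\Omega$, I would observe that both quantities equal $\max_{y\in\Omega}|y|$. Since $h_\Omega(x)=\max_{y\in\Omega}\langle x,y\rangle$, interchanging the maxima and applying Cauchy-Schwarz gives $\max_x h_\Omega(x)=\max_y |y|$. Meanwhile $r_\Omega(\xi)\xi\in\partial\Omega$ and $|r_\Omega(\xi)\xi|=r_\Omega(\xi)$, so $\max r_\Omega=\max_{y\in\partial\Omega}|y|=\max_y|y|$, where the last step uses that the norm is maximized on the boundary of a compact convex body. For $\min h_\Omega=\min r_\Omega$, the easy direction $\min r_\Omega\le\min h_\Omega$ comes from the supporting half-space inclusion: for every $x\in\mathbb{S}^{n-1}$, $r_\Omega(x)x\in\Omega$, so $r_\Omega(x)=\langle r_\Omega(x)x,x\rangle\le h_\Omega(x)$. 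The reverse inequality will fall out of the outer-normal argument described below.

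For the support-function inequality, the key is to show that $h_\Omega(x_{max})x_{max}$ itself lies in $\Omega$. Choose $y^*\in\Omega$ attaining $\langle y^*,x_{max}\rangle=h_\Omega(x_{max})$; then $|y^*|\ge\langle y^*,x_{max}\rangle=h_\Omega(x_{max})=\max h_\Omega=\max_{y\in\Omega}|y|$, which forces equality in Cauchy-Schwarz and hence $y^*=h_\Omega(x_{max})x_{max}$. Testing this point in the definition $h_\Omega(x)\ge\langle x,y^*\rangle$ yields the inequality.

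For the radial-function inequality, I would prove that $\xi_{min}$ is the outer unit normal of some supporting hyperplane at $p_0:=r_\Omega(\xi_{min})\xi_{min}\in\partial\Omega$. Let $x_0$ be any outer normal at $p_0$; then
\[
h_\Omega(x_0)=\langle p_0,x_0\rangle=r_\Omega(\xi_{min})\langle\xi_{min},x_0\rangle\le r_\Omega(\xi_{min}),
\]
while $h_\Omega(x_0)\ge\min h_\Omega\ge\min r_\Omega=r_\Omega(\xi_{min})$ by the easy direction of the min identity. Equality throughout forces $\langle\xi_{min},x_0\rangle=1$, i.e.\ $x_0=\xi_{min}$; this simultaneously closes the loop $\min h_\Omega=\min r_\Omega$ and gives $\Omega\subset\{y:\langle y,\xi_{min}\rangle\le r_\Omega(\xi_{min})\}$. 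Testing $r_\Omega(\xi)\xi\in\Omega$ in this half-space delivers $r_\Omega(\xi)\langle\xi,\xi_{min}\rangle\le r_\Omega(\xi_{min})$. The only subtlety is bookkeeping: one must avoid invoking the full min identity before the outer-normal argument has been carried out, but otherwise every step is a direct consequence of the definitions together with Cauchy-Schwarz and the supporting hyperplane theorem.
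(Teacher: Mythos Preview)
Your argument is correct. The paper does not actually prove this lemma; it simply cites it as a known fact from \cite{CL19}, so there is no in-paper proof to compare against. Your self-contained treatment---reducing the max/min identities to $\max_{y\in\Omega}|y|$ via Cauchy--Schwarz, then extracting the two inequalities from the observation that $x_{max}$ and $\xi_{min}$ are themselves outer unit normals at the corresponding boundary points---is the standard elementary route and is carried out cleanly. Your remark about the bookkeeping (using only the easy direction $\min r_\Omega\le\min h_\Omega$ before the outer-normal step, and letting the chain $r_\Omega(\xi_{min})\ge h_\Omega(x_0)\ge\min h_\Omega\ge\min r_\Omega=r_\Omega(\xi_{min})$ close the loop) correctly avoids circularity.
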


		\subsection{Torsional rigidity and torsional measure}
		\label{TRTM}
		{~}
		
		According to \cite{Colesanti10,LiZhu20}, we first list some properties of torsional rigidity associated with the solution $U$ of \eqref{BVP} as follows:
		\eqref{TRV} can be transformed as
		
		\begin{align*}
			T(\Omega)&=\frac{1}{n+2}\int_{\partial\Omega}X\cdot\nu_{\Omega}(X)|\overline{\nabla} U(X)|^{2} d\mathcal{H}^{n-1}(X)\nonumber\\
			&=\frac{1}{n+2}\int_{\S^{n-1}}h_{\Omega}(x)|\overline{\nabla} U(\nu^{-1}_{\Omega}(x))|^{2} dS_{\Omega}(x).
		\end{align*}
		The torsional rigidity is positively homogeneous of order $(n +2)$, that is,
		\[T(t\Omega)=t^{n+2}T(\Omega) \quad \forall \Omega \in \K, t>0.\]
		This can be obtained from the fact that if $U$ is the solution of \eqref{BVP} in $\Omega$ and $t>0$, then the function
		\[V(y)=t^2U(\frac{y}{t}),\quad y\in t\Omega,\]
		is the corresponding solution in $t\Omega$.
		
		The torsional rigidity is also translation invariant ,that is, $T(x+\Omega)=T(\Omega), \forall x\in\R^n$. By the fact that $W(y)=U(y-x)$ is the corresponding solution in $x+\Omega$.\\
		\indent Obviously, by the definition \eqref{def1}, the torsional rigidity is monotone increasing, i.e.$T(\Omega_1)\leq T(\Omega_0)$ if $\Omega_1\subset\Omega_0.$
		
		The following lemma leads to an $L^\infty$ estimate for the gradient of $U$.
		\begin{lem}\label{Graes}\cite{Colesanti10}
			Let $\Omega$ be a convex body in 
			$\R^n$, if $U$ is the solution of \eqref{BVP} 
			in $\Omega$, then
			\[|\overline{\nabla} U(X)|\leq\diam(\Omega), \quad \forall X\in\Omega.\]
			
		\end{lem}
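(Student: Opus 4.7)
The plan is to split the estimate into an interior argument based on the maximum principle applied to $|\overline{\nabla} U|^2$, plus a boundary argument based on a one-dimensional barrier comparison in a slab containing $\Omega$. I will first assume $\Omega$ is smooth and strictly convex so that $U\in C^2(\overline{\Omega})$, and then recover the general case by approximating $\Omega$ from outside by smooth convex bodies whose torsion functions converge uniformly (and whose diameters converge).

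For the interior step, compute
\begin{equation*}
\overline{\Delta}\,|\overline{\nabla} U|^2 \;=\; 2|\overline{\nabla}^2 U|^2 + 2\,\overline{\nabla} U\cdot\overline{\nabla}(\overline{\Delta} U) \;=\; 2|\overline{\nabla}^2 U|^2 \;\geq\; 0,
\end{equation*}
since $\overline{\Delta}U\equiv -2$ is constant. Hence $|\overline{\nabla} U|^2$ is subharmonic in $\Omega$, and the maximum principle reduces the problem to proving $|\overline{\nabla} U(X_0)|\le\diam(\Omega)$ for every $X_0\in\partial\Omega$. At such a boundary point, since $U\equiv 0$ on $\partial\Omega$, all tangential derivatives vanish, so $|\overline{\nabla} U(X_0)|=|\partial U/\partial\nu(X_0)|$, where $\nu$ is the outer unit normal at $X_0$.

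For the boundary step, let $d=\diam(\Omega)$ and pick any supporting hyperplane at $X_0$ with outer normal $\nu$; define the affine function $\ell(X)=-(X-X_0)\cdot\nu$ on $\R^n$. Convexity gives $0\le\ell(X)\le d$ for all $X\in\overline\Omega$. Consider the comparison function
\begin{equation*}
W(X)\;=\;\ell(X)\bigl(d-\ell(X)\bigr),
\end{equation*}
which satisfies $\overline\Delta W=-2$ on the slab $\{0\le\ell\le d\}\supset\Omega$, and $W\ge 0$ on $\partial\Omega$. Then $\overline{\Delta}(W-U)=0$ in $\Omega$ and $W-U\ge 0$ on $\partial\Omega$, so the maximum principle yields $U\le W$ in $\Omega$. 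Since $U(X_0)=W(X_0)=0$ and both are nonnegative, comparing inner normal derivatives gives $|\partial U/\partial\nu(X_0)|\le|\partial W/\partial\nu(X_0)|$. A direct computation using $\partial\ell/\partial\nu=-1$ and $\ell(X_0)=0$ yields $\partial W/\partial\nu(X_0)=-d$, so $|\overline{\nabla} U(X_0)|\le d=\diam(\Omega)$, completing the smooth case.

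The only non-routine point will be the extension to arbitrary (possibly non-smooth) convex bodies, which I will handle by an approximation argument: choose $\Omega_k\in\mathcal{K}$ smooth and strictly convex with $\Omega\subset\Omega_k$, $\Omega_k\to\Omega$ in Hausdorff distance (so $\diam(\Omega_k)\to\diam(\Omega)$), and let $U_k$ solve \eqref{BVP} on $\Omega_k$. Standard elliptic theory gives $U_k\to U$ locally uniformly with $\overline{\nabla}U_k\to\overline{\nabla}U$ in $\Omega$, and the already-established estimate $|\overline\nabla U_k|\le\diam(\Omega_k)$ passes to the limit. This yields $|\overline\nabla U(X)|\le\diam(\Omega)$ for all $X\in\mathring\Omega$, which extends to $\overline\Omega$ by the non-tangential limit result of Dahlberg quoted above.
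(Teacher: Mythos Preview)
Your argument is correct. The paper does not supply its own proof of this lemma; it simply quotes the result from \cite{Colesanti10}, and the barrier-plus-subharmonicity argument you give is exactly the standard one used there, so there is nothing to compare.
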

		For any $X\in\partial\Omega, 0<b<1,$ the non-tangential cone is defined as
		\[\Gamma(X)=\{Y\in \Omega: \dist(Y,\partial\Omega)>b|X-Y|\}.\]
		\begin{lem}\label{nonlimit}
			Let $\Omega$ be a convex body in 
			$\R^n$, if $U$ is the solution of \eqref{BVP}
			in $\Omega$, then the non-tangential limit 
			\[\overline{\nabla} U(X)=\lim_{Y\rightarrow X,Y\in\Gamma(X)}\overline{\nabla} U(Y),\]
			exists for $\mathcal{H}^{n-1}$ almost all $X\in\partial\Omega$. Furthermore, for $\mathcal{H}^{n-1}$ almost all $X\in\partial\Omega$,
			\[\overline{\nabla} U(X)=-|\overline{\nabla} U(X)|\nu_{\Omega}(X).\]
		\end{lem}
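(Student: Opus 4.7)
The plan is to reduce both claims to Dahlberg's non-tangential boundary regularity theorem for harmonic functions on Lipschitz domains \cite{Dahlberg77}. First, I would remove the inhomogeneity by setting $v(X):=-|X|^{2}/n$, so that $\Delta v = -2$ on $\mathbb{R}^{n}$ and $U_{0}:=U-v$ is harmonic in $\mathring{\Omega}$ with smooth boundary values $U_{0}|_{\partial\Omega} = |X|^{2}/n$. Since $v$ is smooth on all of $\mathbb{R}^{n}$, the non-tangential limit of $\overline{\nabla}U$ exists at a point $X\in\partial\Omega$ if and only if that of $\overline{\nabla}U_{0}$ exists there, and the two limits differ by the continuous quantity $\overline{\nabla}v(X)$. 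Because $\Omega$ is a convex body, $\partial\Omega$ is globally Lipschitz, so Dahlberg's theorem applies to $U_{0}$: its non-tangential maximal function of the gradient belongs to $L^{2}(\partial\Omega,\mathcal{H}^{n-1})$, and the non-tangential limit of $\overline{\nabla}U_{0}$ exists $\mathcal{H}^{n-1}$-almost everywhere on $\partial\Omega$. Adding back $\overline{\nabla}v$ yields the first assertion.

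For the second assertion I would combine three ingredients. (a) By the strong maximum principle applied to $-\Delta U = 2 > 0$ with $U|_{\partial\Omega}=0$, we have $U>0$ in $\mathring{\Omega}$. (b) Since $\Omega$ is convex, at $\mathcal{H}^{n-1}$-almost every $X_{0}\in\partial\Omega$ the boundary admits a unique supporting hyperplane with outer normal $\nu_{\Omega}(X_{0})$ and can be written locally as a $C^{1}$ graph over that hyperplane. (c) Since $U\equiv 0$ on $\partial\Omega$, composing with any such local $C^{1}$ boundary parametrization shows that every tangential derivative of $U$ at $X_{0}$ vanishes; combined with the existence of the non-tangential limit established in the first step, the tangential component of $\overline{\nabla}U(X_{0})$ must be zero, so $\overline{\nabla}U(X_{0})$ is parallel to $\nu_{\Omega}(X_{0})$. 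The sign is then fixed by (a): since $U>0$ inside and $U=0$ on $\partial\Omega$, the inward normal derivative is nonnegative at $X_{0}$, forcing $\overline{\nabla}U(X_{0}) = -|\overline{\nabla}U(X_{0})|\,\nu_{\Omega}(X_{0})$.

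The main technical obstacle is making step (c) rigorous: one must pass from the vanishing of the tangential derivative of the boundary trace of $U$ to the vanishing of the tangential component of the $\mathcal{H}^{n-1}$-a.e.\ defined non-tangential limit of $\overline{\nabla}U$. A clean route is to write
\[
U(Y)-U(X_{0})=\int_{0}^{1}\overline{\nabla}U\bigl(X_{0}+s(Y-X_{0})\bigr)\cdot(Y-X_{0})\,ds
\]
along line segments from $X_{0}$ to nearby boundary points $Y$ pushed slightly into the non-tangential cone, and pass to the limit using the $L^{2}$ bound for the non-tangential maximal function supplied by Dahlberg's theorem. Granting that theorem together with the $C^{1}$-regularity of convex hypersurfaces $\mathcal{H}^{n-1}$-a.e., the remainder is essentially bookkeeping, which is why the lemma is typically just quoted from \cite{Dahlberg77}.
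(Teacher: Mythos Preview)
The paper does not actually prove this lemma: it is stated without proof among a list of preliminary facts quoted from \cite{Colesanti10,LiZhu20}, with the non-tangential existence attributed earlier in the introduction to Dahlberg \cite{Dahlberg77}. Your proposal therefore goes well beyond what the paper does, supplying a genuine proof sketch where the authors simply cite the result.

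Your argument is the standard one and is essentially correct. The reduction $U_{0}=U+|X|^{2}/n$ to a harmonic function with Lipschitz boundary data, followed by Dahlberg's boundary regularity on Lipschitz (hence convex) domains, is exactly how one derives the first assertion; you even flag, correctly, that the only delicate point is justifying that the tangential component of the non-tangential limit vanishes, and you indicate the right tool (the $L^{2}$ control of the non-tangential maximal function of $\overline{\nabla}U_{0}$). One small refinement: rather than integrating along segments to boundary points ``pushed slightly into the cone'', it is cleaner to invoke that $U_{0}$ has a well-defined $H^{1}(\partial\Omega)$ trace whose tangential gradient coincides $\mathcal{H}^{n-1}$-a.e.\ with the tangential part of the non-tangential limit of $\overline{\nabla}U_{0}$ --- this is part of the Dahlberg/Jerison--Kenig package on Lipschitz domains and avoids the ad hoc limiting argument. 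With that in hand, the sign determination via the strong maximum principle is immediate.
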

		
		The following two lemmas are imporant for us to solve the Minkowski problem for torisional rigidity(see \cite{Colesanti10,LiZhu20}):
		\begin{lem}\label{TVC}
			Let $\{\Omega_i\}_{i=0}^\infty$ be a sequence of convex bodies in $\R^n$, 
			if $\Omega_i$ converges to 
			$\Omega_0$ in the Hausdorff metric as $i\rightarrow\infty$, then
			\[\lim_{i\rightarrow\infty}T(\Omega_i)=T(\Omega_0).\]
		\end{lem}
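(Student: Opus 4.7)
The plan is to exploit three elementary properties of $T$ already recorded in the excerpt: translation invariance, homogeneity $T(t\Omega)=t^{n+2}T(\Omega)$, and monotonicity $T(\Omega_1)\leq T(\Omega_0)$ when $\Omega_1\subset\Omega_0$. Combined with a standard consequence of Hausdorff convergence expressed in terms of support functions, these give the continuity statement essentially for free.

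First I would reduce to the case where $\Omega_0$ contains the origin in its interior. Since $\Omega_0$ is a convex body it has nonempty interior, so there exists $p\in\mathbb{R}^n$ and $r>0$ with $B_r(0)\subset \Omega_0 - p$. Using translation invariance $T(\Omega_i)=T(\Omega_i-p)$ and the fact that Hausdorff convergence is preserved under translation, I may replace each $\Omega_i$ by $\Omega_i-p$ and assume outright that $B_r\subset\Omega_0$, in particular $h_{\Omega_0}\geq r$ on $\mathbb{S}^{n-1}$.

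Next I would pass from Hausdorff convergence of sets to uniform convergence of support functions: $\Omega_i\to\Omega_0$ in the Hausdorff metric if and only if $h_{\Omega_i}\to h_{\Omega_0}$ uniformly on $\mathbb{S}^{n-1}$. Since $h_{\Omega_0}\geq r>0$, for any $\varepsilon\in(0,1)$ there is $N$ such that for $i\geq N$,
\[
(1-\varepsilon)h_{\Omega_0}(x)\leq h_{\Omega_i}(x)\leq (1+\varepsilon)h_{\Omega_0}(x),\qquad x\in\mathbb{S}^{n-1},
\]
which is equivalent to the set inclusions
\[
(1-\varepsilon)\Omega_0\subset \Omega_i\subset (1+\varepsilon)\Omega_0.
\]

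Finally, applying the monotonicity of the torsional rigidity noted just before Lemma \ref{Graes} together with the homogeneity $T(t\Omega)=t^{n+2}T(\Omega)$ yields
\[
(1-\varepsilon)^{n+2}T(\Omega_0)\leq T(\Omega_i)\leq (1+\varepsilon)^{n+2}T(\Omega_0),
\]
for $i\geq N$. Letting $\varepsilon\to 0^+$ gives $\lim_{i\to\infty} T(\Omega_i)=T(\Omega_0)$, completing the proof. The only step that requires any care is confirming the two-sided scaling containment from Hausdorff convergence; this is where the interior-point reduction is essential, since without $h_{\Omega_0}\geq r>0$ one could not bound $h_{\Omega_i}$ from below and above by multiples of $h_{\Omega_0}$. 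Everything else is a direct quotation of the structural properties of $T$ listed in Subsection \ref{TRTM}.
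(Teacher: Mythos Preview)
Your argument is correct. The paper does not actually supply a proof of this lemma; it merely cites \cite{Colesanti10,LiZhu20} as references for the result. Your approach, by contrast, gives a short self-contained proof using only the translation invariance, $(n+2)$-homogeneity, and monotonicity of $T$ recorded in Subsection~\ref{TRTM}, together with the standard equivalence between Hausdorff convergence of convex bodies and uniform convergence of their support functions. The reduction to $0\in\mathrm{int}\,\Omega_0$ via translation invariance is exactly what is needed to turn the additive closeness $|h_{\Omega_i}-h_{\Omega_0}|<\varepsilon r$ into the multiplicative sandwich $(1-\varepsilon)h_{\Omega_0}\le h_{\Omega_i}\le(1+\varepsilon)h_{\Omega_0}$, and from there the squeeze argument is immediate. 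This is more elementary than what one finds in the cited references, where continuity is typically deduced from convergence of the underlying solutions $U_i$ of the boundary-value problem; your route avoids any PDE analysis entirely.
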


		\begin{lem}\label{TMC}
			Let $\{\Omega_i\}_{i=0}^\infty$ 
			be a sequence of convex bodies in $\R^n$, if $\Omega_i$ convergens to  
			$\Omega_0$ in the Hausdorff metric as $i\rightarrow\infty$, then the sequence of $\mu_{tor}(\Omega_i,\cdot)$ converges weakly in the sense of measures
			to  $\mu_{tor}(\Omega_0,\cdot)$ as $i\rightarrow\infty.$
		\end{lem}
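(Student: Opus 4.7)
The strategy is the classical one for weak continuity of geometric measures on $\mathbb{S}^{n-1}$ under Hausdorff convergence of convex bodies: combine a uniform bound on the total masses with convergence of integrals against a dense family of test functions in $C(\mathbb{S}^{n-1})$.

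The uniform total-mass bound is immediate. By Lemma~\ref{Graes},
\[\mu_{tor}(\Omega_i,\mathbb{S}^{n-1})=\int_{\partial\Omega_i}|\overline{\nabla}U_i|^2\,d\mathcal{H}^{n-1}\le (\diam\Omega_i)^2\,\mathcal{H}^{n-1}(\partial\Omega_i),\]
and both the diameter and the surface area are continuous functionals on $\mathcal{K}$ under the Hausdorff metric, hence uniformly bounded along any convergent sequence. By Banach--Alaoglu the problem reduces to identifying every weak* cluster point $\mu_\ast$ of $\{\mu_{tor}(\Omega_i,\cdot)\}$ with $\mu_{tor}(\Omega_0,\cdot)$. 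Because differences of support functions of convex bodies are uniformly dense in $C(\mathbb{S}^{n-1})$ (any $C^2$ function on $\mathbb{S}^{n-1}$ becomes a support function after shifting by a large constant), it suffices to prove, for every $K\in\mathcal{K}$,
\[\int_{\mathbb{S}^{n-1}} h_K\,d\mu_{tor}(\Omega_i,\cdot)\longrightarrow \int_{\mathbb{S}^{n-1}} h_K\,d\mu_{tor}(\Omega_0,\cdot).\]
By the Colesanti--Fimiani variational identity \eqref{VFT}, each side equals the one-sided derivative $\tfrac{d}{dt}T(\Omega+tK)|_{t=0^+}$, so the claim reduces to convergence of these derivatives.

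The main obstacle is that Lemma~\ref{TVC} only yields pointwise convergence $T(\Omega_i+tK)\to T(\Omega_0+tK)$ in $t$, which does not automatically transfer to convergence of right derivatives at the endpoint $t=0$. I would address this through two complementary ingredients. First, the Brunn--Minkowski inequality for torsional rigidity (Colesanti) makes $t\mapsto T(\Omega+tK)^{1/(n+2)}$ concave on $[0,\infty)$, so the difference quotients $(T(\Omega+tK)^{1/(n+2)}-T(\Omega)^{1/(n+2)})/t$ are monotone in $t$ and approach $\tfrac{d}{dt}T(\Omega+tK)^{1/(n+2)}|_{t=0^+}$ from below; passing to the limit in $i$ at a fixed $t>0$ (using $T(\Omega_i)\to T(\Omega_0)>0$ for the chain rule) yields the $\liminf$ inequality $\liminf_i\int h_K\,d\mu_{tor}(\Omega_i,\cdot)\ge \int h_K\,d\mu_{tor}(\Omega_0,\cdot)$. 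Second, the matching $\limsup$ bound requires PDE stability for the Dirichlet problem \eqref{BVP} under Hausdorff perturbation of convex domains: $U_i\to U_0$ uniformly on compact subsets of $\mathring{\Omega}_0$, and using Dahlberg's $L^2$-bound for the boundary gradient together with Lemma~\ref{nonlimit}, the boundary gradients $|\overline{\nabla}U_i|$ converge to $|\overline{\nabla}U_0|$ after pulling back to a common parameterization (e.g., via the radial functions, which converge uniformly under Hausdorff convergence). A dominated convergence argument, using the uniform bound $|\overline{\nabla}U_i|\le\diam\Omega_i$, then delivers the reverse inequality and completes the identification $\mu_\ast=\mu_{tor}(\Omega_0,\cdot)$.
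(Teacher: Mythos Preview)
The paper does not prove this lemma; it is stated as a known result with a citation to \cite{Colesanti10,LiZhu20} (see the sentence preceding Lemmas~\ref{TVC} and~\ref{TMC}). So there is no in-paper proof to compare against, and your attempt is a reconstruction of a result the authors take for granted.

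On the substance of your argument: the total-mass bound and the reduction to support-function test functions are fine, and the $\liminf$ inequality via the Brunn--Minkowski concavity of $T^{1/(n+2)}$ is a clean observation. The gap is the $\limsup$ half. You write that ``the boundary gradients $|\overline{\nabla}U_i|$ converge to $|\overline{\nabla}U_0|$ after pulling back to a common parameterization'' and then invoke dominated convergence, but this convergence of boundary gradients is precisely the non-trivial analytic content of the lemma, and neither Dahlberg's $L^2$ bound nor Lemma~\ref{nonlimit} (existence of non-tangential limits for a \emph{fixed} domain) gives it. Hausdorff convergence of convex domains does not by itself imply any pointwise or $L^2$ convergence of the normal derivatives of the torsion functions on the moving boundaries; one needs quantitative stability estimates for the Dirichlet problem on convex domains (this is what Colesanti--Fimiani actually establish). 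As written, your $\limsup$ step assumes what has to be proved.

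If you want to salvage the concavity route without PDE stability, note that concavity alone only gives one inequality; you would need an additional structural input (e.g., convergence of the total mass $\mu_{tor}(\Omega_i,\mathbb{S}^{n-1})$ to $\mu_{tor}(\Omega_0,\mathbb{S}^{n-1})$, which is itself a special case of the lemma and not obviously easier) to close the argument.
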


		\subsection{Monotonicity of functionals}
		\label{MOF}
		{~}

		Let $\mathcal{M}$ be a smooth, closed, uniformly convex hypersurface in $\mathbb{R}^{n}$, enclosing the origin.  The parametrization of $\mathcal{M}$ is given by the inverse Gauss map $X:\S^{n-1}\to \mathcal{M}\subset \mathbb{R}^{n}$. It follows from \eqref{def u} and \eqref{rev-rad-gauss} that 
		\begin{eqnarray}\label{X}
			X(x)&=&r(\alpha^*(x))\alpha^*(x),\\ 
			\label{support}
			h(x)&=&\langle x,X(x)\rangle,
		\end{eqnarray} where  $h$ is the support function of (the convex body inclosed by) $\mathcal{M}$.
		It is well known that the Gauss curvature of $\mathcal{M}$ is \begin{equation}
			K=\frac{1}{\det(\nabla^2h+hI)},\label{curvature-formula-1}
		\end{equation} and  the principal curvature radii of $\mathcal{M}$  are the eigenvalues of the matrix 
		\begin{equation}\label{pcr}
			b_{ij}=\nabla_{ij}h+h\delta_{ij}.
		\end{equation}  Moreover, we have the following equalities, see e.g. \cite{LSW16},
		\begin{eqnarray}  \label{nablau} 
			r\cdot\xi&=&h\cdot x+\nabla h\ \ \mathrm{and} \ \ r=\sqrt {h^2+|\nabla h|^2},\\ \label{nablar}
			h&=&\frac{r^2}{\sqrt{r^2+|{\nabla}r|^2}}.
		\end{eqnarray}
		
		Let $h(\cdot, t)$ and $r(\cdot, t)$ be the support and radial functions of $\mathcal{M}_t$. Recall that (see e.g., \cite[Lemma 2.1]{CCL19} or \cite{HLYZ16, LSW16})
		\begin{eqnarray}\label{r--u}
			\frac{\partial_t r(\xi,t)}{r }=\frac{\partial_t h (x,t)}{h}.
		\end{eqnarray} 
		By \eqref{support} and \eqref{curvature-formula-1}, the flow equation \eqref{SSF} for  $\mathcal{M}_t$ can be reformulated by its support function $h(x,t)$ as follows: 
		\begin{equation}\label{uSSF}
			\left\{
			\begin{array}{rl}
				\partial_th(x,t)&=-f(x){\psi}(h)|\overline{\nabla} U(hx+\nabla h,t)|^{-2} K+\eta(t)h,\\\\
				h(\cdot,0)&=h_0.
			\end{array}\right.
		\end{equation} By use of \eqref{r--u}, the flow equation \eqref{SSF} for  $\mathcal{M}_t$ can also be reformulated by its  radial function $r(\xi,t)$ as follows: 
		\begin{equation}\label{rSSF}
			\left\{
			\begin{array}{rl}
				\partial_tr(\xi,t)&=-f(x){\psi}(h)h^{-1}r|\overline{\nabla} U(r\cdot\xi,t)|^{-2} K+\eta(t)r ,\\\\
				r(\cdot,0)&=r_0.
			\end{array}\right.
		\end{equation} Similarly, \eqref{SF2} can be reformulated by its support function $h(x,t)=h_{\varepsilon}(x,t)$ and radial function $r(\xi,t)=r_{\varepsilon} (\xi,t)$ rep.
		\begin{equation}\label{uSF2}
			\left\{
			\begin{array}{rl}
				\partial_th(x,t)&=-f(x)\widehat{\psi}_\varepsilon(h)|\overline{\nabla} U(hx+\nabla h,t)|^{-2} K+\eta_\varepsilon(t)h,\\\\
				h(\cdot,0)&=h_0;
			\end{array}\right.
		\end{equation} 
		\begin{equation}\label{rSF2}
			\left\{
			\begin{array}{rl}
				\partial_tr(\xi,t)&=-f(x) \widehat{\psi}_\varepsilon(h) h^{-1} r|\overline{\nabla} U(r\cdot\xi,t)|^{-2}K+\eta_\varepsilon(t)r ,\\\\
				r(\cdot,0)&=r_0.
			\end{array}\right.
		\end{equation}
		
		It is clear that both of the equations \eqref{uSSF} and \eqref{uSF2} are parabolic Monge-Amp\'ere type, their solutions exist for a short time. Therefore the flow \eqref{SSF}, as well as \eqref{SF2}, has short time solution. 
		Let $\mathcal{M}_t=X(\mathbb{S}^{n-1}, t)$ be the smooth, closed and uniformly convex hypersurface parametrized by $X(\cdot, t)$, where $X(\cdot, t)$ is a smooth solution to the flow \eqref{SSF} with $t\in[0,T)$ for some constant $T>0$.  Let $\Omega_t$ be the convex body enclosed by $\mathcal{M}_t$ such that  $\Omega_t\in \mathcal{K}_0$ for all $t\in[0,T)$.  We now show that  $T(\Omega_t)$ remains unchanged along the flow \eqref{SSF}.

		\begin{lem}\label{Volume}  
			Let $X(\cdot,t)$ be a smooth solution to the flow \eqref{SSF} with $t\in[0,T)$, and $\M_t=X(\S^{n-1},t)$ be a smooth, closed and uniformly convex hypersurface.  Suppose that the origin lies in the interior of the convex body $\Omega_t$ enclosed by $\M_t$ for all $t\in[0,T)$. Then, for any $t\in[0,T)$, one has 
			\begin{eqnarray}\label{Volume bound}
				T(\Omega_t)=T(\Omega_0).
			\end{eqnarray}
		\end{lem}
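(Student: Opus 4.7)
The plan is to differentiate $T(\Omega_t)$ in $t$ and show the derivative vanishes, using the first variation formula for torsional rigidity together with the specific choice of the global term $\eta(t)$.

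First I would establish (or invoke) the smooth first variation identity: along a smooth family of uniformly convex bodies $\Omega_t$ with support functions $h(\cdot,t)$,
\begin{equation*}
	\frac{d}{dt}T(\Omega_t)=\int_{\S^{n-1}}\partial_t h(x,t)\,d\mu_{tor}(\Omega_t,\cdot).
\end{equation*}
This is the infinitesimal version of the Colesanti--Fimiani formula \eqref{VFT}: for the Minkowski combination $\Omega_t+s\Omega_1$ the right-hand side picks up $h_{\Omega_1}$, and by linearity/approximation one can replace $h_{\Omega_1}$ by an arbitrary smooth $\partial_t h$ as long as $\nabla^2 h+hI>0$ along the flow (which is ensured by the short-time existence of the parabolic Monge--Amp\`ere equation \eqref{uSSF}). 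Using \eqref{TRM} together with $dS_{\Omega_t}=\det(\nabla^2h+hI)\,dx$ on $\S^{n-1}$, this identity rewrites as
\begin{equation*}
	\frac{d}{dt}T(\Omega_t)=\int_{\S^{n-1}}\partial_t h\,|\overline{\nabla} U(hx+\nabla h,t)|^2\det(\nabla^2h+hI)\,dx.
\end{equation*}

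Next I would plug in $\partial_t h$ from \eqref{uSSF}. Using $K=1/\det(\nabla^2h+hI)$ from \eqref{curvature-formula-1}, the $f\psi(h)|\overline{\nabla} U|^{-2}K$ term combines with $|\overline{\nabla} U|^2\det(\nabla^2h+hI)$ to give simply $f\psi(h)$, so
\begin{equation*}
	\frac{d}{dt}T(\Omega_t)=-\int_{\S^{n-1}}f\psi(h)\,dx+\eta(t)\int_{\S^{n-1}}h\,|\overline{\nabla} U|^2\det(\nabla^2h+hI)\,dx.
\end{equation*}

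Finally I would identify the second integral. The representation
\begin{equation*}
	T(\Omega_t)=\frac{1}{n+2}\int_{\S^{n-1}}h(x,t)|\overline{\nabla} U(\nu^{-1}_{\Omega_t}(x),t)|^{2}\,dS_{\Omega_t}(x),
\end{equation*}
recorded in Subsection \ref{TRTM}, gives exactly $\int h|\overline{\nabla} U|^2\det(\nabla^2h+hI)\,dx=(n+2)T(\Omega_t)$. Substituting and using the definition \eqref{eta def} of $\eta(t)$,
\begin{equation*}
	\frac{d}{dt}T(\Omega_t)=-\int_{\S^{n-1}}f\psi(h)\,dx+\eta(t)(n+2)T(\Omega_t)=0,
\end{equation*}
so $T(\Omega_t)\equiv T(\Omega_0)$ on $[0,T)$.

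The only genuinely non-routine step is the first one: justifying the pointwise first-variation formula for the torsional rigidity under a smooth flow of support functions (as opposed to the Minkowski-sum setting of \eqref{VFT}). Since $h(\cdot,t)$ stays smooth and uniformly convex on the short-time interval where the flow exists classically, this comes down to a standard linearization, but it is the one place where a little care is required; the rest is algebraic cancellation enforced by the specific form of $\eta(t)$.
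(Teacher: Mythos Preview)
Your proof is correct and follows essentially the same route as the paper: differentiate $T(\Omega_t)$ via the first-variation formula \eqref{VFT}, substitute $\partial_t h$ from \eqref{uSSF}, and use the representation $(n+2)T(\Omega_t)=\int_{\S^{n-1}} h|\overline{\nabla}U|^2K^{-1}\,dx$ together with the definition \eqref{eta def} of $\eta(t)$ to see the derivative vanishes. Your additional remark about justifying the first-variation identity for a general smooth flow (rather than only Minkowski sums) is a fair point of care that the paper leaves implicit.
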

		\begin{proof} 
			By \eqref{VFT}, \eqref{eta def} and \eqref{uSSF}, we have 
			\begin{align*}
				\frac{d}{dt}T(\Omega_t)
				&=\int_{\S^{n-1}}h_t|\overline{\nabla} U(hx+\nabla h,t)|^{2} K^{-1}dx\nonumber\\
				&=\int_{\S^{n-1}}-f{\psi}(h)dx+\eta(t)\int_{\S^{n-1}}h|\overline{\nabla} U(hx+\nabla h,t)|^{2} K^{-1}dx\nonumber\\
				&=0.
			\end{align*}
			In conclusion, $T(\Omega_t)$ remains unchanged along the flow \eqref{SSF}, and in particular, \eqref{Volume bound} holds for any $t\in[0,T)$.  
		\end{proof}

		The lemma below shows that the functional \begin{equation}\label{function}
			\mathcal{J}(h(\cdot,t))=\int_{\S^{n-1}}f{\Psi}(h)dx,
		\end{equation} is monotone along the flow \eqref{SSF}.
		\begin{lem}\label{mono} Let $\psi\in \mathcal{B}$ or $\psi\in \mathcal{C}$ be a smooth function. Let $X(\cdot,t)$, $\M_t$, and $\Omega_t$ be as in Lemma \ref{Volume}. Then the functional $\mathcal{J}$ defined by \eqref{function} is non-increasing along the flow \eqref{SSF}. That is, $\frac{\,d\mathcal{J}(h(\cdot, t))}{\,dt}\leq 0$, with equality if and only if $\mathcal{M}_t$ satisfies the elliptic equation \eqref{elliptic}.
		\end{lem}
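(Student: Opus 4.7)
The plan is to differentiate $\mathcal{J}$ directly, substitute the flow equation \eqref{uSSF} for $h_t$, and then recognize that what remains is an application of the Cauchy-Schwarz inequality whose equality case is exactly the elliptic equation \eqref{elliptic}. Since $\Psi'(s)=\psi(s)/s$ for $\psi\in\mathcal{B}$ (and likewise for $\psi\in\mathcal{C}$), a direct computation gives
\begin{equation*}
\frac{d\mathcal{J}}{dt}=\int_{\mathbb{S}^{n-1}} f\,\frac{\psi(h)}{h}\,h_t\,dx=-\int_{\mathbb{S}^{n-1}}\frac{f^2\psi(h)^2 K}{h\,|\overline{\nabla} U|^2}\,dx+\eta(t)\int_{\mathbb{S}^{n-1}} f\psi(h)\,dx.
\end{equation*}

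Next I would invoke the torsional-rigidity identity recalled in Subsection \ref{TRTM},
\begin{equation*}
(n+2)T(\Omega_t)=\int_{\mathbb{S}^{n-1}} h\,|\overline{\nabla} U|^{2} K^{-1}\,dx,
\end{equation*}
together with the definition \eqref{eta def} of $\eta(t)$, to rewrite the second term and put $d\mathcal{J}/dt$ entirely in terms of three integrals. At this point the key observation is
\begin{equation*}
\int_{\mathbb{S}^{n-1}} f\psi(h)\,dx=\int_{\mathbb{S}^{n-1}}\bigl(\sqrt{h}\,|\overline{\nabla} U|\,K^{-1/2}\bigr)\cdot\Bigl(\frac{f\psi(h)\,K^{1/2}}{\sqrt{h}\,|\overline{\nabla} U|}\Bigr)\,dx,
\end{equation*}
so Cauchy-Schwarz yields
\begin{equation*}
\Bigl(\int_{\mathbb{S}^{n-1}}f\psi(h)\,dx\Bigr)^{2}\le\int_{\mathbb{S}^{n-1}} h\,|\overline{\nabla} U|^{2} K^{-1}\,dx\cdot\int_{\mathbb{S}^{n-1}}\frac{f^2\psi(h)^{2}K}{h\,|\overline{\nabla} U|^{2}}\,dx,
\end{equation*}
which rearranges exactly to $\frac{d\mathcal{J}}{dt}\le 0$.

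For the equality case, Cauchy-Schwarz forces the two factors inside the integrand to be pointwise proportional, i.e.\ there exists a constant $\gamma>0$ such that $\gamma f\psi(h)=h|\overline{\nabla} U|^{2}K^{-1}$ on $\mathbb{S}^{n-1}$. Using \eqref{curvature-formula-1} to replace $K^{-1}$ by $\det(\nabla^{2}h+hI)$, this is precisely the elliptic equation \eqref{elliptic}, which closes the characterization. The only subtlety is that all denominators must be finite and positive: this requires $|\overline{\nabla} U|>0$ on $\partial\Omega_t$, which follows from the Hopf boundary lemma applied to the strictly subharmonic (actually $\Delta U=-2$) torsion function on the smooth uniformly convex body $\Omega_t$, and $h>0$, which is provided by the hypothesis $\Omega_t\in\mathcal{K}_0$ along the flow. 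No step looks genuinely hard; the main care is in bookkeeping the Cauchy-Schwarz pairing so that the equality case lines up with the target PDE \eqref{elliptic}.
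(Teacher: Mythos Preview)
Your proof is correct and follows essentially the same route as the paper: differentiate $\mathcal{J}$, substitute the flow equation \eqref{uSSF}, use the identity $(n+2)T(\Omega_t)=\int_{\mathbb{S}^{n-1}} h|\overline{\nabla}U|^2K^{-1}\,dx$ together with \eqref{eta def}, and apply Cauchy--Schwarz (the paper phrases it as H\"older). The paper additionally notes that the proportionality constant in the equality case is exactly $1/\eta(t)$, and your remark on $|\overline{\nabla}U|>0$ via the Hopf lemma is handled elsewhere in the paper (Remark~\ref{remark}).
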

		\begin{proof} Let $h(\cdot, t)$  be the support function of $\mathcal{M}_t$. By \eqref{eta def} and \eqref{uSSF}, we have
			\begin{align*}
				\frac{\,d\mathcal{J}(h(\cdot, t))}{\,dt}&= \int_{\mathbb{S}^{n-1}}f\frac{\psi(h)}{h}h_tdx\nonumber\\
				&=-\int_{\mathbb{S}^{n-1}}\frac{f^2\psi^2(h)K}{h|\overline{\nabla} U|^{2}}dx+\eta(t)\int_{\mathbb{S}^n}f(x)\psi(h)dx\nonumber\\
				&=\{(n+2)T(\Omega_t)\}^{-1}\{-\int_{\mathbb{S}^{n-1}}\frac{f^2\psi^2(h)K}{h|\overline{\nabla} U|^{2}}\int_{\S^{n-1}}h|\overline{\nabla} U|^{2} K^{-1}dx+(\int_{\mathbb{S}^n}f(x)\psi(h)dx)^2\}\nonumber\\
				&\leq 0,
			\end{align*}  
			where the first inequality holds if $\psi\in \mathcal{B}$, $\Psi(s)=\int_0^s\frac{\psi(t)}{t}dt$, or $\psi\in \mathcal{C}$, $\Psi(s)=\int_1^s\frac{\psi(t)}{t}dt$, the last inequality holds from H\"{o}lder inequality, and the equality holds if and only if there exists a constant $c(t)>0$ such that 
			\begin{eqnarray}\label{relsult1}
				h|\overline{\nabla} U(hx+\nabla h,t)|^2\det(\nabla^2 h+hI)=c(t) f(x){\psi(h)}.
			\end{eqnarray} Moreover, it can be proved that $c(t)=1/\eta(t)$ as follows: 
			\[\eta(t)=\frac{\int_{\mathbb{S}^{n-1}}f {\psi}(h) dx}{(n+2)T(\Omega_t)}=\frac{1}{c(t)}.\] This concludes the proof. 
		\end{proof}

		Regarding the flow \eqref{SF2}, the results similar to Lemmas \ref{Volume} and \ref{mono} can be obtained. For $\psi\in \mathcal{B}$, let \begin{eqnarray}\label{hat varphi}
			\widehat{\Psi}_\varepsilon(s)=\int_0^s\frac{\widehat{\psi}_\varepsilon(t)}{t}\,dt\ \ \ \mathrm{for}\ \  s\in [0,\infty).  
		\end{eqnarray}
		It follows from \eqref{hatpsi eps} that  $\widehat{\Psi}_\varepsilon: [0,\infty)\rightarrow [0, \infty)$ is well defined and  continuous, such that, $\widehat{\Psi}_\varepsilon(0)=0$. Moreover,  $\widehat{\Psi}_\varepsilon(s)$ is strictly increasing with respect to $s$ on $[0,\infty)$. Define the functional $\mathcal{J}_\varepsilon$ by
		\begin{equation}\label{functional}
			\mathcal{J}_\varepsilon(h)=\int_{\S^{n-1}}f\widehat{\Psi}_\varepsilon(h)dx.
		\end{equation}
		
		Following the same arguments of Lemmas \ref{mono} and \ref{Volume}, one can prove that, along the flow \eqref{SF2}, $\mathcal{J}_\varepsilon$  is non-increasing  and the torsional rigidity $T(\cdot)$ remains unchanged.  In fact, it is clear that, for $\varepsilon>0$ small enough, the solutions to the flow \eqref{SF2} exist for a short time.  Let $\mathcal{M}_t^\varepsilon=X_{\varepsilon} (\mathbb{S}^{n-1}, t)$ be the smooth, closed and uniformly convex hypersurface parametrized by $X_{\varepsilon}(\cdot, t)$, where $X_{\varepsilon}(\cdot, t)$ is a smooth solution to the flow \eqref{SF2} with $t\in[0,T)$ for some constant $T>0$.  Let $\Omega^{\varepsilon}_t$ be the convex body enclosed by $\mathcal{M}^{\varepsilon}_t$ such that  $\Omega_t^{\varepsilon}\in \mathcal{K}_0$ for all $t\in[0,T)$. $h_{\varepsilon}(x,t)$ is the support function of $\Omega_t^{\varepsilon}$.  
		\begin{lem}\label{monotone1}Let $\psi\in \mathcal{B}$ be a smooth function. Then, the functional $\mathcal{J}_\varepsilon$ is non-increasing along the flow \eqref{SF2}. That is, $\frac{\,d\mathcal{J_{\varepsilon}}(h_{\varepsilon}(\cdot, t))}{\,dt}\leq 0$, with equality if and only if $\mathcal{M}^{\varepsilon}_t$ satisfies the elliptic equation   \eqref{elliptic eq}. Moreover,  
			\begin{eqnarray}\label{G Volume bound}
				T(\Omega^{\varepsilon}_t)=T(\Omega_0),\,\forall\,t\in[0,T).
			\end{eqnarray}
		\end{lem}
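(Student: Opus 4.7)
The plan is to mirror, essentially line for line, the arguments used in Lemmas \ref{Volume} and \ref{mono}, with $\psi$ replaced by the smoothed function $\widehat{\psi}_\varepsilon$, with $\Psi$ replaced by $\widehat{\Psi}_\varepsilon$ from \eqref{hat varphi}, with $\eta$ replaced by $\eta_\varepsilon$ from \eqref{eta epi def}, and with the flow \eqref{uSSF} replaced by the reformulated flow \eqref{uSF2} for $h_\varepsilon$. One should first observe that by the construction \eqref{hatpsi eps}, $\widehat{\psi}_\varepsilon(s)=s^{n+\varepsilon}$ near $s=0$, so that $\widehat{\psi}_\varepsilon(t)/t$ is integrable at the origin, and hence $\widehat{\Psi}_\varepsilon$ is a well-defined, nonnegative, strictly increasing $C^\infty$ function on $[0,\infty)$ with $\widehat{\Psi}_\varepsilon(0)=0$. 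This ensures the functional $\mathcal{J}_\varepsilon$ is finite along the flow.

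For the invariance of torsional rigidity, I would differentiate $T(\Omega_t^\varepsilon)$ using the variational formula \eqref{VFT}, rewritten on $\mathbb{S}^{n-1}$ via the Gauss parametrization as
\begin{equation*}
\frac{d}{dt}T(\Omega_t^\varepsilon)=\int_{\mathbb{S}^{n-1}}(h_\varepsilon)_t\,|\overline{\nabla} U_\varepsilon(h_\varepsilon x+\nabla h_\varepsilon,t)|^{2}K^{-1}\,dx.
\end{equation*}
Substituting the flow equation \eqref{uSF2} and then invoking the definition \eqref{eta epi def} of $\eta_\varepsilon$ yields immediate cancellation, giving $\frac{d}{dt}T(\Omega_t^\varepsilon)=0$, and hence \eqref{G Volume bound}.

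For the monotonicity of $\mathcal{J}_\varepsilon$, differentiation under the integral with the chain rule gives
\begin{equation*}
\frac{d\mathcal{J}_\varepsilon(h_\varepsilon(\cdot,t))}{dt}=\int_{\mathbb{S}^{n-1}}f\,\frac{\widehat{\psi}_\varepsilon(h_\varepsilon)}{h_\varepsilon}(h_\varepsilon)_t\,dx.
\end{equation*}
Plugging in the flow equation \eqref{uSF2} splits the right-hand side into two terms; after using \eqref{eta epi def} together with $T(\Omega_t^\varepsilon)$ expressed through the Gauss parametrization as in Section \ref{TRTM}, one obtains
\begin{equation*}
\frac{d\mathcal{J}_\varepsilon}{dt}=\frac{1}{(n+2)T(\Omega_t^\varepsilon)}\left\{\Big(\int_{\mathbb{S}^{n-1}}f\widehat{\psi}_\varepsilon(h_\varepsilon)dx\Big)^{2}-\int_{\mathbb{S}^{n-1}}\frac{f^2\widehat{\psi}_\varepsilon(h_\varepsilon)^2 K}{h_\varepsilon|\overline{\nabla} U_\varepsilon|^{2}}dx\cdot\int_{\mathbb{S}^{n-1}}h_\varepsilon|\overline{\nabla} U_\varepsilon|^{2}K^{-1}dx\right\}.
\end{equation*}
The Cauchy--Schwarz (H\"older) inequality applied to the factorization $f\widehat{\psi}_\varepsilon(h_\varepsilon)=\bigl(f\widehat{\psi}_\varepsilon(h_\varepsilon)K^{1/2}h_\varepsilon^{-1/2}|\overline{\nabla} U_\varepsilon|^{-1}\bigr)\cdot\bigl(h_\varepsilon^{1/2}K^{-1/2}|\overline{\nabla} U_\varepsilon|\bigr)$ shows the bracket is nonpositive, proving $\frac{d\mathcal{J}_\varepsilon}{dt}\leq 0$.

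Finally, the equality case of H\"older forces the existence of $c(t)>0$ with $h_\varepsilon|\overline{\nabla} U_\varepsilon(h_\varepsilon x+\nabla h_\varepsilon,t)|^{2}\det(\nabla^2 h_\varepsilon+h_\varepsilon I)=c(t)f(x)\widehat{\psi}_\varepsilon(h_\varepsilon)$, and integrating both sides against $dx$ and using \eqref{eta epi def} together with the Gauss-parametrized expression for $T(\Omega_t^\varepsilon)$ pins down $c(t)=1/\eta_\varepsilon(t)$, which is exactly \eqref{elliptic eq} with $\gamma_\varepsilon=1/\eta_\varepsilon(t)$. There is no real obstacle beyond bookkeeping; the only subtlety is verifying the integrability of $\widehat{\psi}_\varepsilon(t)/t$ at $t=0$, which is automatic from \eqref{hatpsi eps}.
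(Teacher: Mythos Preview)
Your proposal is correct and follows exactly the approach the paper indicates: the paper itself does not give a separate proof of Lemma~\ref{monotone1} but simply states that it follows by the same arguments as Lemmas~\ref{Volume} and~\ref{mono} with $\psi$, $\Psi$, $\eta$ replaced by $\widehat{\psi}_\varepsilon$, $\widehat{\Psi}_\varepsilon$, $\eta_\varepsilon$. You have written out those details (including the integrability check for $\widehat{\psi}_\varepsilon(t)/t$ at $0$, which the paper also notes just before \eqref{functional}), so there is nothing to add.
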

		
		\section{$C^0$ estimates} \label{section-3}
		In this section, we shall estimate the uniform lower and upper bounds of the solutions to \eqref{SF2} and \eqref{SSF}. Let ${\Omega_0}\in \mathcal{K}_0$ be the convex body enclosed by the initial hypersurface $\mathcal{M}_0$ of the flow \eqref{SF2}. As $\varepsilon\in (0, 1)$  is arbitrarily small, a constant $c_0>0$ can be found, such that, 
		\begin{eqnarray}\label{ep}
			T(\Omega_0)\ge (2c_0)^{n+2}\ge(20\varepsilon)^{n+2}.
		\end{eqnarray}  For simplicity, we shall omit the subscript/superscript $\varepsilon$ in  $X_\varepsilon(\cdot,t)$  (the solution to \eqref{SF2}), $\mathcal{M}_t^\varepsilon=X_\varepsilon(\S^n,t)$, $\Omega_t^{\varepsilon}$ (the convex body enclosed by $\mathcal{M}_t^\varepsilon$),  $h_\varepsilon(\cdot,t)$ and $r_{\varepsilon}(\cdot, t)$  (the support and radial functions of $\Omega_t^{\varepsilon}$) etc, if  no confusion occurs.

		\begin{lem}\label{upperbound}
			Let $f$ is a smooth positive functions on $\mathbb{S}^{n-1}$ and $h(\cdot,t)$ be a positive, smooth and uniformly convex solution to \eqref{uSF2}.
			Let  $\psi\in \mathcal{B}$ be a smooth function.
			Then there is a constant $C>0$ depending only on $f$, $\psi$ and $\Omega_0$, but independent of $\varepsilon$ and $t$, such that
			\begin{eqnarray}\label{upper bound2}
				\max_{x\in\mathbb{S}^{n-1}}h(x, t)  \le  C \ \ \mathrm{and}\ \ \max_{x\in\mathbb{S}^{n-1}}|\nabla h (x, t)| \le C \ \ \ \mathrm{for\ all}\  t\in[0,T). 
			\end{eqnarray} 
		\end{lem}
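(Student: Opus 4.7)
The plan is to combine two ingredients already at our disposal: the monotonicity of the functional $\mathcal{J}_\varepsilon$ along the flow (Lemma \ref{monotone1}) and the geometric inequality in Lemma \ref{pro}. The starting point is $\mathcal{J}_\varepsilon(h(\cdot,t))\le \mathcal{J}_\varepsilon(h_0)$ for all $t\in[0,T)$. Since $\Omega_0\in\mathcal{K}_0$ is fixed, $h_0$ is bounded, and the primitive $\widehat{\Psi}_\varepsilon$ in \eqref{hat varphi} can be compared with $\Psi$: for any $s\geq 2$,
\begin{equation*}
\widehat{\Psi}_\varepsilon(s)=\Psi(s)+\int_0^{2\varepsilon}\tfrac{\widehat{\psi}_\varepsilon(t)}{t}\,dt-\Psi(2\varepsilon),
\end{equation*}
and the boundary contribution is $\varepsilon$-independently bounded because $\int_0^\varepsilon t^{n+\varepsilon-1}dt\le 1$, $\widehat{\psi}_\varepsilon\le C_0$ on $(\varepsilon,2\varepsilon)$ by \eqref{constant-c-0}, and $\Psi(2\varepsilon)\to 0$ as $\varepsilon\to 0$. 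This yields a constant $M$, independent of $\varepsilon$ and $t$, with $\mathcal{J}_\varepsilon(h(\cdot,t))\le M$.

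Next, I would translate this energy bound into a pointwise bound. Set $R(t)=\max_{x\in\mathbb{S}^{n-1}}h(x,t)$, attained at some $x_{\max}\in\mathbb{S}^{n-1}$. By the second inequality of Lemma \ref{pro}, $h(x,t)\ge\langle x,x_{\max}\rangle R(t)$ for every $x$ in the hemisphere $\langle x,x_{\max}\rangle\ge 0$, so $h(x,t)\ge R(t)/2$ on the cap $E=\{x:\langle x,x_{\max}\rangle\ge 1/2\}$ whose area depends only on $n$. Using positivity of $f$ (so $f\ge c_f>0$) and monotonicity of $\widehat{\Psi}_\varepsilon$,
\begin{equation*}
M\ge \mathcal{J}_\varepsilon(h(\cdot,t))\ge c_f\,|E|\,\widehat{\Psi}_\varepsilon\!\left(R(t)/2\right).
\end{equation*}
Combining with the uniform comparison $\widehat{\Psi}_\varepsilon(s)\ge\Psi(s)-C$ for $s\ge 2$ and with property (ii) of the class $\mathcal{A}\supset\mathcal{B}$ ($\Psi(s)\to\infty$), we obtain an $\varepsilon$- and $t$-independent upper bound on $R(t)$, which is exactly the first estimate in \eqref{upper bound2}.

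For the gradient bound I would use the identity $h^2+|\nabla h|^2=r(\alpha^*(x),t)^2$ from \eqref{nablau}, together with the first equality of Lemma \ref{pro} (which states $\max_\xi r=\max_x h$). This immediately gives $|\nabla h(x,t)|\le r(\alpha^*(x),t)\le R(t)\le C$ with the same constant just obtained, finishing the proof.

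The only nontrivial point is the $\varepsilon$-independent comparison between $\widehat{\Psi}_\varepsilon$ and $\Psi$: one must use the explicit form of $\widehat{\psi}_\varepsilon$ in \eqref{hatpsi eps} on $[0,2\varepsilon]$ to ensure that the initial-segment contribution to $\widehat{\Psi}_\varepsilon$ does not blow up as $\varepsilon\to 0$. Once this is in hand, everything else is monotonicity plus Lemma \ref{pro}; in particular the argument needs no maximum-principle input, which is essential because the lower bound on $h$ (treated separately in Lemma \ref{lowerbound}) is exactly the place where the maximum principle fails.
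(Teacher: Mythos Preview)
Your proposal is correct and follows essentially the same approach as the paper's proof: both use the monotonicity of $\mathcal{J}_\varepsilon$ from Lemma~\ref{monotone1}, the cap argument via Lemma~\ref{pro} on $\Sigma_{1/2}=\{x:\langle x,x_{\max}\rangle\ge 1/2\}$, the comparison $\widehat{\Psi}_\varepsilon(s)\ge \Psi(s)-\Psi(2\varepsilon)$ for $s\ge 2\varepsilon$, and the divergence $\Psi(s)\to\infty$, followed by the identical gradient bound via \eqref{nablau} and Lemma~\ref{pro}. Your treatment is in fact slightly more explicit than the paper's in justifying why $\mathcal{J}_\varepsilon(h_0)\le C_1$ holds with $C_1$ independent of $\varepsilon$.
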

		\begin{proof}  
				Combining \eqref{functional} wtih Lemma \ref{monotone1},  there exists a constant $C_1>0$, independent of $\varepsilon$, such that, 
				\begin{eqnarray}\label{intbound1} C_1\geq 
					\mathcal{J}_\varepsilon \big(h_0\big)\ge\mathcal{J}_\varepsilon\big(h(\cdot, t)\big)=\int_{\mathbb{S}^{n-1}}f(x)\widehat{\Psi}_\varepsilon(h)dx.
				\end{eqnarray}
				Let $x_t\in\mathbb{S}^{n-1}$ satisfy $h(x_t, t) =\max_{x\in \mathbb{S}^{n-1}} h(x, t)$. Without loss of generality, assume that $h(x_t, t)>10$ holds for some $t\in [0, T)$, otherwise, we have done. Let $$\Sigma_{\beta}=\{x\in \mathbb{S}^{n-1}: \langle x, x_t\rangle \ge \beta\}.$$ As $\varepsilon\in (0, 1)$, it can be checked from  Lemma \ref{pro} that, \begin{equation}\label{comp-max-supp}
					h(x,t)\ge h(x_t,t)\langle x_t,x\rangle\ge\frac{1}{2}h(x_t,t)>5>2\varepsilon \ \ \ \mathrm{for\ all}\ x\in \Sigma_{1/2}.\end{equation}
				It is easy to check that $\widehat{\Psi}_\varepsilon(s)\ge \Psi(s)-\Psi(2\varepsilon)\ge\Psi(s)-\Psi(5)$ for $s\in[2\varepsilon,\infty)$. Therefore, 
				\begin{eqnarray*}
					\int_{\mathbb{S}^{n-1}}f(x)\widehat{\Psi}_\varepsilon(h(x, t))\,dx&\ge&\int_{x\in\Sigma_{1/2}}f(x)\widehat{\Psi}_\varepsilon(h(x, t))\,dx
					\\&\ge&\int_{x\in\Sigma_{1/2}} f(x) \widehat{\Psi}_\varepsilon \Big(\frac{h(x_t,t)}{2}\Big)\,dx
					\\&\ge&  \Big[\Psi\Big(\frac{h(x_t,t)}{2}\Big)-\Psi(5)\Big] \cdot \min_{x\in \mathbb{S}^{n-1}} f(x)\cdot \int_{\Sigma_{1/2}}\,dx.
				\end{eqnarray*} Note that, the set $\Sigma_{1/2}$ may depend on $t$, but $ \int_{\Sigma_{1/2}}\,dx$ is a constant independent of $t$.
				Since  $\psi\in \mathcal{B}$, that $\Psi(s)$ is strictly increasing on $s\in (0, \infty)$ and $\lim_{s\rightarrow +\infty}\Psi(s)=+\infty$, this implies that $h(x, t)$ is uniformly bounded above.
				Hence it follows from \eqref{nablau} and Lemma \ref{pro} that 
				\[  \max_{x\in \mathbb{S}^{n-1}}|\nabla h(x, t)| \leq   \max_{x\in \mathbb{S}^{n-1}} r(x,t) =  \max_{x\in\mathbb{S}^{n-1}} h(x,t)\le C.\]
				
			\end{proof}

			The following lemma provides uniform bound for $\eta_\varepsilon(t)$ defined in \eqref{eta epi def}.

			\begin{lem}\label{etaepsbound}
				Let $f$ and $\psi$ satisfy conditions stated in Lemma \ref{upperbound}. Let $h(\cdot,t)$ be a positive, smooth and uniformly convex solution to \eqref{uSF2}. 
				Then  \begin{eqnarray}\label{etaepsilon bonud}
					\frac{1}{C_2}\le\eta_\varepsilon(t)\le C_2\ \ \ \mathrm{for\ all}\  t\in[0,T),
				\end{eqnarray} where $C_2>0$ is a constant  depending only on $f$, $\psi$ and $\Omega_0$, but independent of $\varepsilon$ and $t$. \end{lem}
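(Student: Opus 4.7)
Since Lemma \ref{monotone1} guarantees $T(\Omega^{\varepsilon}_t)=T(\Omega_0)$ for every $t\in[0,T)$, the denominator of \eqref{eta epi def} is a fixed positive number, and it suffices to estimate the numerator $N_\varepsilon(t):=\int_{\mathbb{S}^{n-1}}f(x)\,\widehat{\psi}_\varepsilon(h(x,t))\,dx$ from above and below by positive constants depending only on $f$, $\psi$, and $\Omega_0$.

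The upper bound will be immediate from Lemma \ref{upperbound}: once $h(\cdot,t)\le C$ uniformly, inspection of the three branches of \eqref{hatpsi eps} shows that $\widehat{\psi}_\varepsilon(s)\le M:=\max\{C_0,\max_{s\in[0,C]}\psi(s)\}$ for all $s\in[0,C]$ and all $\varepsilon\in(0,1)$, so $N_\varepsilon(t)\le M\,\|f\|_{L^\infty}\,|\mathbb{S}^{n-1}|$.

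The main obstacle is the lower bound, because $h$ may degenerate to zero on large parts of $\mathbb{S}^{n-1}$ while $\widehat{\psi}_\varepsilon(0)=0$. The plan is to use the preservation of $T$ to keep $\max_x h(\cdot,t)$ bounded away from zero, and then exploit convexity (Lemma \ref{pro}) to upgrade this pointwise information to a uniform lower bound on a fixed spherical cap. Concretely, since the origin is interior to $\Omega^{\varepsilon}_t$ and $\max_x h(x,t)=\max_{\xi}r(\xi,t)$ by Lemma \ref{pro}, we have $\Omega^{\varepsilon}_t\subset\overline{B}_{\rho(t)}(0)$ with $\rho(t):=\max_x h(x,t)$. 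Combining the monotonicity of $T$ with its homogeneity then gives
\[
T(\Omega_0)=T(\Omega^{\varepsilon}_t)\le \rho(t)^{n+2}\,T(B_1),
\]
so $\rho(t)\ge c_1:=\bigl(T(\Omega_0)/T(B_1)\bigr)^{1/(n+2)}>0$. Picking $x_t\in\mathbb{S}^{n-1}$ with $h(x_t,t)=\rho(t)$ and setting $\Sigma_t:=\{x\in\mathbb{S}^{n-1}:\langle x,x_t\rangle\ge 1/2\}$, Lemma \ref{pro} yields $h(x,t)\ge c_1/2$ on $\Sigma_t$, with $|\Sigma_t|$ a fixed positive constant independent of $t$.

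Finally, for $\varepsilon$ sufficiently small (say $\varepsilon<c_1/8$), one has $h\ge c_1/2\ge 4\varepsilon>2\varepsilon$ on $\Sigma_t$, so by the top branch of \eqref{hatpsi eps}, $\widehat{\psi}_\varepsilon(h)=\psi(h)\ge m:=\min_{s\in[c_1/2,C]}\psi(s)$, which is strictly positive because $\psi\in\mathcal{B}$ is continuous and positive on $(0,\infty)$. Hence $N_\varepsilon(t)\ge(\min_{\mathbb{S}^{n-1}}f)\,m\,|\Sigma_t|>0$, and dividing by $(n+2)T(\Omega_0)$ together with the upper bound delivers the constant $C_2$ in \eqref{etaepsilon bonud}.
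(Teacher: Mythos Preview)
Your proof is correct and follows the same overall architecture as the paper's: fix the denominator via $T(\Omega_t^\varepsilon)=T(\Omega_0)$, bound the numerator above using the uniform upper bound on $h$ from Lemma~\ref{upperbound} together with the branch structure of $\widehat{\psi}_\varepsilon$, and bound it below by first showing $\max_x h(\cdot,t)\ge c>0$ and then using the cap $\Sigma_{1/2}$ (via Lemma~\ref{pro}) where $h\ge c/2>2\varepsilon$ so that $\widehat{\psi}_\varepsilon(h)=\psi(h)$ there.

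The only noteworthy difference is in how you obtain the lower bound on $\max_x h(\cdot,t)$. The paper invokes the gradient estimate $|\overline{\nabla}U|\le\mathrm{diam}(\Omega_t)$ of Lemma~\ref{Graes} to write $T(\Omega_0)=T(\Omega_t)\le(\mathrm{diam}\,\Omega_t)^2\,\mathrm{Vol}(\Omega_t)$, and then combines this with \eqref{ep} to deduce $\max r\ge c_0$. You instead use the cleaner chain $T(\Omega_0)=T(\Omega_t^\varepsilon)\le T(\overline{B}_{\rho(t)})=\rho(t)^{n+2}T(B_1)$, which relies only on monotonicity and $(n+2)$-homogeneity of $T$ and avoids Lemma~\ref{Graes} entirely. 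Both routes yield an $\varepsilon$-independent lower bound on $\rho(t)$ depending only on $\Omega_0$; your argument is arguably more direct. Note that your restriction $\varepsilon<c_1/8$ is harmless, since the paper's standing assumption \eqref{ep} already forces $\varepsilon$ to be small relative to $T(\Omega_0)$.
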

			
			\begin{proof} 
				By Lemmas \ref{Graes} and \ref{monotone1},
				\begin{eqnarray}\label{le1}T(\Omega_0) =T(\Omega_t)=\int_{\Omega_t}|\overline{\nabla}U|^2dX \leq 
					(\diam(\Omega_t))^2\text{Vol}(\Omega_t)
				\end{eqnarray}As $\varepsilon\in (0, 1)$ be arbitrarily small,
				by \eqref{ep} and \eqref{le1}, we can get
				\begin{eqnarray}\label{maxlow1}
					\max_{x\in \mathbb{S}^{n-1}}r(x,t)\ge c_0 \ge 10\varepsilon.
				\end{eqnarray}
				Again let $x_t\in\mathbb{S}^{n-1}$ satisfy that $h({x_t},t)=\displaystyle\max_{x\in \mathbb{S}^{n-1}}h(x,t)$. Similar to \eqref{comp-max-supp}, for $x\in \Sigma_{1/2}$,  one has,
				\begin{eqnarray} \label{comp-5-15} h(x,t)\ge\frac{1}{2}h({x_t},t)\geq\frac{c_0}{2} \ge5\varepsilon.\end{eqnarray} 
				From \eqref{hatpsi eps}, the following fact holds: for  $x\in \Sigma_{1/2}$, 
				\begin{eqnarray}\label{widetilde1}
					\widehat{\psi}_\varepsilon\big(h(x,t)\big)=\psi\big(h(x,t)\big).
				\end{eqnarray}
					By \eqref{upper bound2}, \eqref{comp-5-15} and  \eqref{widetilde1}, one gets  
					\begin{eqnarray*} \int_{\mathbb{S}^{n-1}}f\widehat{\psi}_\varepsilon(h(x,t)) dx\geq \!\! 
						\int_{\Sigma_{1/2}}\!\!\!\!\! f\psi(h(x,t))\,dx    \geq \!   \min_{h\in [\frac{c_0}{2}, C]}\psi\big(h\big) \cdot \min_{x\in \mathbb{S}^{n-1}} f(x)\cdot \int_{\Sigma_{1/2}}\!\! \!\! \!\! \,dx. \end{eqnarray*} 
					This, together with \eqref{eta epi def}, further imply that, for all $t\in (0, T]$, 
					\begin{eqnarray} 
						\eta_\varepsilon(t)&=&\frac{\int_{\S^{n-1}}f\widehat{\psi}_\varepsilon(h) dx}{(n+2)T(\Omega^{\varepsilon}_t)} \nonumber \\ &\geq& \frac{\min_{h\in [\frac{c_0}{2}, C]}\psi\big(h\big) \cdot \min_{x\in \mathbb{S}^{n-1}} f(x)\cdot \int_{\Sigma_{1/2}}\!\! \!\! \!\! \,dx}{(n+2)T(\Omega_0)}>0.  \label{first-ineq}
					\end{eqnarray}

						It follows from  \eqref{hatpsi eps} and \eqref{constant-c-0} that  $\widehat{\psi}_\varepsilon (s)\le C_0$ on $[0,2\varepsilon]$, where $ C_0=\max\{1, \displaystyle\max_{s\in[0,2]}\psi(s)\}$, and $\widehat{\psi}_\varepsilon(s)=\psi(s)$ on $[2\varepsilon,\infty)$.
						Hence, for all $t\in [0, T)$, 
						\begin{eqnarray*}  
							\int_{\mathbb{S}^{n-1}}f\widehat{\psi}_\varepsilon(h(x, t))\,dx&=& \int_{\{x\in\S^n: h(x)\in[0,2\varepsilon)\cup[2\varepsilon,C]\}} f\widehat{\psi}_\varepsilon(h(x, t))\,dx 
							\\&\le&\int_{\mathbb{S}^{n-1}} \Big(C_0+\psi\big(h(x,t)\big)\Big)f\,dx \\ &\le& \Big\{C_0+\max_{h\in [0,C]} \psi\big(h\big)\Big\}\cdot\max_{x\in\mathbb{S}^{n-1}} f(x)\cdot |\S^{n-1}|. 
						\end{eqnarray*} 
						One has, for all $t\in [0, T)$,
						\begin{eqnarray*} 
							\eta_\varepsilon(t)=\frac{\int_{\mathbb{S}^{n-1}}f\widehat{\psi}_\varepsilon(h(x,t)) dx}{(n+2)T(\Omega^{\varepsilon}_t)}\leq \frac{\Big\{C_0+\max_{h\in [0,C]} \psi\big(h\big)\Big\}\cdot\max_{x\in\mathbb{S}^{n-1}} f(x)\cdot |\mathbb{S}^{n-1}| 
							}{(n+2)T(\Omega_0)}. 
						\end{eqnarray*} In view of \eqref{first-ineq}, one can let $C_2>0$ be independent of $\varepsilon$ and $t\in [0, T)$ can be found so that 
						\eqref{etaepsilon bonud} holds.  
					\end{proof}
					
					Recall that the maximal and the  minimal width of $\Omega$ defined in \eqref{mmwide}, respectively, are
					\begin{eqnarray*} 
						w_\Omega^+=\max_{x\in\mathbb{S}^{n-1}}\{h_\Omega(x)+h_\Omega(-x)\}\ \ \mathrm{and}\ \ 
						w_\Omega^-=\min_{x\in\mathbb{S}^{n-1}}\{h_\Omega(x)+h_\Omega(-x)\}.
					\end{eqnarray*} 
					The following lemma gives upper and lower bounds, which are independent of $\varepsilon$, for the maximal and the minimal width of $\Omega_t$ on $t\in [0, T)$.
					
					\begin{lem}\label{widthbound}Let $f$ and $\psi$ satisfy conditions stated in Lemma \ref{upperbound}.   Let $h(\cdot,t)$ be a positive, smooth and uniformly convex solution to \eqref{uSF2}.  
						Then there is a constant $C_3>0$ depending only on $f$, $\psi$ and $\Omega_0$, but independent of $\varepsilon$ and $t\in [0, T)$, such that,  for all $t\in[0,T)$, 
						\begin{eqnarray}\label{width bonud}
							1/C_3 \le w_{\Omega_t}^-\le w_{\Omega_t}^+\le C_3.
						\end{eqnarray}
					\end{lem}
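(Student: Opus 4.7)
The plan is to obtain the upper bound on $w_{\Omega_t}^+$ directly from Lemma \ref{upperbound}, and to deduce the lower bound on $w_{\Omega_t}^-$ by combining the preservation of torsional rigidity (Lemma \ref{monotone1}) with the geometric fact that the volume of a convex body is controlled by its minimum width times a suitable power of its diameter.

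First, since for any convex body $w^+(\Omega)=\max_u(h_\Omega(u)+h_\Omega(-u))$ coincides with $\mathrm{diam}(\Omega)$, Lemma \ref{upperbound} immediately yields
\[
w_{\Omega_t}^+\le 2\max_{x\in\mathbb{S}^{n-1}}h(x,t)\le 2C,
\]
with $C$ depending only on $f$, $\psi$ and $\Omega_0$. The bound $w_{\Omega_t}^-\le w_{\Omega_t}^+$ is trivial, so only the lower bound remains.

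For the lower bound, let $x_0\in\mathbb{S}^{n-1}$ realize $w_{\Omega_t}^-=h(x_0,t)+h(-x_0,t)$. Then $\Omega_t$ is contained in a slab perpendicular to $x_0$ of width $w_{\Omega_t}^-$, while its diameter is bounded above by $w_{\Omega_t}^+$. Projecting onto $x_0^\perp$ gives a convex set of diameter at most $w_{\Omega_t}^+$, hence of $(n{-}1)$-dimensional volume at most $\omega_{n-1}(w_{\Omega_t}^+/2)^{n-1}$ (where $\omega_{n-1}$ is the volume of the unit ball in $\mathbb{R}^{n-1}$). Fubini then yields
\[
\mathrm{Vol}(\Omega_t)\le c_n\, w_{\Omega_t}^-\,(w_{\Omega_t}^+)^{n-1}.
\]
Combining this with Lemma \ref{Graes} (which gives $|\overline\nabla U|\le \mathrm{diam}(\Omega_t)=w_{\Omega_t}^+$) and the preservation of torsional rigidity from Lemma \ref{monotone1}, we find
\[
T(\Omega_0)=T(\Omega_t)=\int_{\Omega_t}|\overline\nabla U|^2\,dX\le (w_{\Omega_t}^+)^2\,\mathrm{Vol}(\Omega_t)\le c_n\,w_{\Omega_t}^-\,(w_{\Omega_t}^+)^{n+1}\le c_n(2C)^{n+1}w_{\Omega_t}^-.
\]
Rearranging and using that $T(\Omega_0)>0$ is a fixed constant determined by $\Omega_0$, we obtain $w_{\Omega_t}^-\ge T(\Omega_0)/[c_n(2C)^{n+1}]=:1/C_3'$, independent of $\varepsilon$ and $t\in[0,T)$. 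Taking $C_3=\max\{C_3',2C\}$ finishes the proof.

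The main (and only) nontrivial step is the lower bound, but once one spells out the slab/diameter geometry the estimate is routine; no part of the argument requires the full strength of the parabolic flow beyond the already-established invariance of $T$ and the a priori upper bound on $h$.
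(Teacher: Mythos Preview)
Your proof is correct and follows essentially the same approach as the paper's: the upper bound comes directly from Lemma \ref{upperbound}, and the lower bound from the chain $T(\Omega_0)=T(\Omega_t)\le(\mathrm{diam}\,\Omega_t)^2\mathrm{Vol}(\Omega_t)\le C\,w_{\Omega_t}^-$ using Lemmas \ref{Graes} and \ref{monotone1}. The paper's proof simply writes the final inequality $(\mathrm{diam}\,\Omega_t)^2\mathrm{Vol}(\Omega_t)\le C_0\,w_{\Omega_t}^-$ without spelling out the slab/projection argument you give, so your version is a faithful but more explicit rendition of the same idea.
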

					\begin{proof} 
						By Lemma \ref{upperbound}, one sees that $$w_{\Omega_t}^+=\max_{x\in\mathbb{S}^{n-1}}\{h_\Omega(x)+h_\Omega(-x)\}\leq 2C.$$
						
						On the other hand, by Lemmas \ref{Graes} and \ref{monotone1},
						\begin{eqnarray*}
							T(\Omega_0)=T(\Omega_t)=\int_{\Omega_t}|\overline{\nabla} U|^2dX \leq 
							(\diam(\Omega_t))^2\text{Vol}(\Omega_t)\leq C_0 w_{\Omega_t}^-.
						\end{eqnarray*}
						So, the desired constant $C_3$ can be obtained such that \eqref{width bonud} holds.
					\end{proof}
					
					\begin{remark}
						Let $U(\cdot,t)$ is the solution of \eqref{BVP} in $\Omega_t$ such that
						\begin{equation*}
							\left\{
							\begin{array}{ll}
								\Delta U(X,t)=-2, X\in \mathring{\Omega}_t ,\\\\
								U(X,t)=0,\quad   X\in \partial\Omega_t
							\end{array}
							\right.
						\end{equation*}
						we can get some results for $\Omega_t$ and $U(\cdot,t)$ as follows :
						\begin{itemize}\label{remark}
							\item[(i)] By Lemmas \ref{upperbound} and  \ref{widthbound}, we can get that there exist two constants $0<R_0<R_1$ such that $B_{R_0}\subset \Omega_t\subset B_{R_1}$ for all $t\in[0,T)$, where $B_{R_i}$ is a ball with radius $R_i,i=0,1$.
							\item[(ii)] Since $\Omega_t$ is smooth convex body, by global regularity theorem in \cite{GT98} we can get $U(\cdot,t)\in C^{\infty}(\Omega_t)$; 
							by the strong maximum principle, $U(\cdot,t)>0$ in $\mathring{\Omega}_t$.
							\item[(iii)]By (i), we can get that $\forall t\in[0,T), \forall x \in \partial\Omega_t$, there exist a ball $B$ such that $B\bigcap\partial\Omega_t={x}$ and $\Omega_t\subset B$ with radius $R$ which can be chosen to be independent of $t$ and $x$. Let $\bar{U}$ be the solution of \eqref{BVP} in $B_R$, by comparison pricinple, there exist a positive constant $\bar{c}$, independent of $t$, such that $U(\cdot,t)\leq\bar{U}(\cdot)\leq\bar{c}$ in $\Omega_t$.
							Similarly, there exists a ball $B$ such that $B\bigcap\partial\Omega_t={x}$ and $B\subset \Omega_t $ with radius $r$ which can be chosen to be independent of $t$ and $x$. Let $\widehat{U}$ is the solution of \eqref{BVP} in $B_r$, by comparison pricinple, $U(\cdot,t)\geq\widehat{U}(\cdot)$ in $B_r$, and $U(x,t)=\widehat{U}(x)$, we have $|\overline{\nabla} U(x,t)|\geq|\overline{\nabla}\widehat{U}(x)|\geq\widehat{c}$ for some $\widehat{c}>0$, independent of $t$, where the last inequality hold by Hopf lemma.
							\item[(iv)] $\forall t\in[0,T), \forall x \in \partial\Omega_t$, by Lemmas \ref{Graes}  and \ref{upperbound}, then there exists a positive constant $c>0$,  independent of $t$, such that $|\overline{\nabla} U(x,t)|\leq c$. Furthermore $|\overline{\nabla}^kU(x,t)|\leq \widetilde{c}$ by virtue of Schauder's theory(see Lemma 6.4 in \cite{GT98}) for the positive integer $k\geq2$, where $\widetilde{c}$ is a positive constant independent of $t$.
							
						\end{itemize}
						
					\end{remark}

					The following lemma provides a uniform lower bound for $h(x, t)$ on $t\in [0, T)$. 
					\begin{lem}\label{lowerbound} Let $f$ and $\psi$ satisfy conditions stated in Lemma \ref{upperbound}.  Let $h(\cdot,t)$ be a positive, smooth and uniformly convex solution to \eqref{uSF2}.  
						Then there is a constant $C_{\varepsilon}>0$ depending only on $\varepsilon$, $f$, $\psi$ and $\Omega_0$ but independent of $t$, such that, for all $t\in[0,T)$,
						\begin{eqnarray}\label{lower bound epsilon}
							\min_{x\in \mathbb{S}^{n-1}}h(x, t)\ge1/C_\varepsilon.
						\end{eqnarray}
					\end{lem}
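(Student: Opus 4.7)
The plan is to exploit the specific behavior of $\widehat{\psi}_\varepsilon$ near $0$ together with the estimates already established in Lemmas \ref{upperbound}, \ref{etaepsbound} and \ref{widthbound}. Let $h_{\min}(t):=\min_{x\in\mathbb{S}^{n-1}} h(x,t)$ and, for each $t$, fix a minimizer $x_t\in\mathbb{S}^{n-1}$. At $x_t$ the gradient $\nabla h$ vanishes and the Hessian $\nabla^2 h$ is positive semi-definite, so the matrix $b_{ij}=\nabla_{ij}h+h\delta_{ij}$ from \eqref{pcr} satisfies $b_{ij}(x_t,t)\ge h_{\min}(t)\,\delta_{ij}$. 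Combined with \eqref{curvature-formula-1} this yields the pointwise bound
\begin{equation*}
K(x_t,t)\;\le\;h_{\min}(t)^{-(n-1)}.
\end{equation*}

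Next I combine this with the a priori bounds already in hand. By Remark (iii) there exists $\widehat{c}>0$ independent of $t$ with $|\overline{\nabla} U(\cdot,t)|\ge \widehat{c}$ on $\partial\Omega_t$; since $X(x_t,t)=hx_t+\nabla h\in\partial\Omega_t$, this gives $|\overline{\nabla} U(hx_t+\nabla h,t)|^{-2}\le \widehat{c}^{-2}$. Lemma \ref{etaepsbound} supplies $\eta_\varepsilon(t)\ge 1/C_2$. As long as $h_{\min}(t)\le\varepsilon$, definition \eqref{hatpsi eps} gives $\widehat{\psi}_\varepsilon(h_{\min}(t))=h_{\min}(t)^{n+\varepsilon}$. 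Inserting these into \eqref{uSF2} evaluated at $x=x_t$ produces
\begin{equation*}
\partial_t h(x_t,t)\;\ge\;-C_4\,h_{\min}(t)^{1+\varepsilon}+\frac{h_{\min}(t)}{C_2}\;=\;h_{\min}(t)\Big(\frac{1}{C_2}-C_4\,h_{\min}(t)^{\varepsilon}\Big),
\end{equation*}
where $C_4:=\widehat{c}^{-2}\max_{\mathbb{S}^{n-1}}f$ is a constant independent of $t$. Here the essential cancellation is between the $n+\varepsilon$ power in $\widehat{\psi}_\varepsilon$ and the $n-1$ in the upper bound on $K$, leaving the net exponent $1+\varepsilon>1$ on the negative term.

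Set $\delta_\varepsilon:=(C_2C_4)^{-1/\varepsilon}$. Then the right-hand side above is non-negative whenever $h_{\min}(t)\le\min\{\varepsilon,\delta_\varepsilon\}$. Applying Hamilton's trick (the envelope theorem for $\min_x h(x,t)$) one obtains $D^{+}h_{\min}(t)\ge\partial_t h(x_t,t)\ge 0$ in that regime, so $h_{\min}$ cannot decrease through the threshold $\min\{\varepsilon,\delta_\varepsilon\}$ from above. Consequently
\begin{equation*}
h_{\min}(t)\;\ge\;\min\{h_{\min}(0),\,\varepsilon,\,\delta_\varepsilon\}\;=:\;1/C_\varepsilon\qquad\forall\,t\in[0,T),
\end{equation*}
and $C_\varepsilon$ depends on $\varepsilon,f,\psi,\Omega_0$ but not on $t$, which is \eqref{lower bound epsilon}. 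The main delicate point is the justification of the differential inequality for the non-smooth quantity $h_{\min}(t)$; this is handled by Hamilton's trick or, equivalently, by the parabolic weak maximum principle applied directly to \eqref{uSF2}. A secondary point is that the uniform positivity of $|\overline{\nabla} U|$ on $\partial\Omega_t$ (Remark (iii)) rests on the uniform interior/exterior ball condition obtained from the width bounds in Lemma \ref{widthbound}, which makes the argument genuinely uniform in $t$.
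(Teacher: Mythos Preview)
Your proposal is correct and follows essentially the same argument as the paper: evaluate \eqref{uSF2} at a spatial minimizer, use $\nabla^2h\ge0$ there to bound $K\le h_{\min}^{-(n-1)}$, invoke $\widehat{\psi}_\varepsilon(s)=s^{n+\varepsilon}$ for $s\le\varepsilon$ together with the bounds $|\overline{\nabla}U|\ge\widehat{c}$ and $\eta_\varepsilon\ge1/C_2$, and conclude that $h_{\min}$ cannot drop below $\min\{h_{\min}(0),\varepsilon,(\widehat{c}^{2}/(C_2\max f))^{1/\varepsilon}\}$. Your explicit mention of Hamilton's trick for the Lipschitz function $h_{\min}(t)$ is a welcome clarification that the paper leaves implicit.
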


					\begin{proof} Let $\bar{x}_t\in \mathbb{S}^{n-1}$ be such that $h(\bar{x}_t,t)= \min_{x\in \mathbb{S}^{n-1}}h(x,t)$. Without loss of generality, assume that  $h(\bar{x}_t,t) <\varepsilon$ holds for some $t\in [0, T)$.   
						It follows from \eqref{hatpsi eps} and  the fact that $\nabla^2 h(\bar{x}_t,t)$ is positive semi-definite that 
						\begin{eqnarray} \widehat{\psi}_\varepsilon(h(\bar{x}_t,t))=\left(h(\bar{x}_t,t)\right)^{n+\varepsilon},\nonumber \\  
							\det(\nabla^2 h(\bar{x}_t,t)+h(\bar{x}_t,t)I)\geq h(\bar{x}_t,t)^{n-1}. \label{estimate-curvature}\end{eqnarray} Together with \eqref{curvature-formula-1},\eqref{uSF2} and Remark \ref{remark}, one has
						\begin{eqnarray*}
							\partial_th(\bar{x}_t,t)&\ge&-\frac{f(\bar{x}_t)}{\widehat{c}^{2}} \left(h(\bar{x}_t,t)\right)^{n+\varepsilon}\left(h(\bar{x}_t,t)\right)^{1-n} +h(\bar{x}_t,t)\eta_\varepsilon(t)
							\\&\ge&\frac{f(\bar{x}_t)}{\widehat{c}^{2}}h(\bar{x}_t,t)\left(-\left(h(\bar{x}_t,t)\right)^\varepsilon+\frac{\widehat{c}^{2}\eta_\varepsilon(t)}{f(\bar{x}_t)}\right).
						\end{eqnarray*}
						This implies either $\partial_th(\bar{x}_t,t)\ge0$, or by \eqref{etaepsilon bonud}
						\[h(\bar{x}_t,t)>\left(\frac{\widehat{c}^{2}\eta_\varepsilon(t)\displaystyle}{\displaystyle\max_{x\in\mathbb{S}^{n-1}}f(x)}\right)^\frac{1}{\varepsilon}>\left(\frac{\widehat{c}^{2}\displaystyle}{C_2\displaystyle\max_{x\in\mathbb{S}^{n-1}}f(x)}\right)^\frac{1}{\varepsilon}.\]
						One gets 
						\begin{eqnarray}\label{lower}
							\min_{x\in \mathbb{S}^{n-1}}h(x, t)\ge\min\bigg\{\varepsilon,  \left(\frac{\widehat{c}^{2}\displaystyle}{C_2\displaystyle\max_{x\in\mathbb{S}^{n-1}}f(x)}\right)^\frac{1}{\varepsilon},h(\bar{x}_0,0)\bigg\}=:\frac{1}{C_\varepsilon},
						\end{eqnarray} where $C_{\varepsilon}>0$ is a constant independent of $t\in [0, T)$. This completes the proof.
					\end{proof}
					
					Note that the uniform lower bound for  $h(\cdot,t)$ in Lemma \ref{lowerbound} depends on $\varepsilon\in (0,1)$. However, such a lower bound may go to zero as $\varepsilon\to0$.

					The following two lemmas give the $C^0$ estimates for the flow \eqref{SSF}.

					\begin{lem}  Let $f$ is a smooth positive functions on $\mathbb{S}^{n-1}$. Let $\psi\in \mathcal{B}$ be a  smooth function such that   \eqref{good-condition-1} holds.     Let $h(\cdot,t)$ for $t\in[0,T)$ be a positive, smooth and uniformly convex solution to \eqref{uSSF}.
						Then there is a constant $C>0$ depending only on $f$, $\psi$ and $\Omega_0$ but independent of $\varepsilon$ and $t\in [0, T)$, such that, for all $t\in [0, T)$, 
						\begin{eqnarray*} 
							1/C\le \min_{x\in \mathbb{S}^{n-1}} h(x,t)\le\max_{x\in \mathbb{S}^{n-1}} h(x,t)\le C \ \ \mathrm{and} \ \  
							\max_{\mathbb{S}^{n-1}}|\nabla h(x, t)|\le C.
						\end{eqnarray*}
					\end{lem}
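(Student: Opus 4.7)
The plan is to parallel the estimates for the perturbed flow in Lemmas \ref{upperbound}--\ref{lowerbound}, upgrading the $\varepsilon$-dependent lower bound of Lemma \ref{lowerbound} into a uniform one by exploiting \eqref{good-condition-1}. For the upper bound, apply Lemma \ref{mono} with $\psi\in\mathcal{B}$: the functional $\mathcal{J}(h(\cdot,t))=\int_{\mathbb{S}^{n-1}}f\Psi(h)\,dx$ is non-increasing along \eqref{SSF}, and since $\Psi$ is strictly increasing with $\Psi(s)\to\infty$ as $s\to\infty$, the argument of Lemma \ref{upperbound}---pick the maximizer $x_t$, observe $h(\cdot,t)\ge\tfrac12 h(x_t,t)$ on $\Sigma_{1/2}$---yields $\max_{\mathbb{S}^{n-1}}h(\cdot,t)\le C$. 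The gradient bound then follows from $|\nabla h|^{2}\le r^{2}=h^{2}+|\nabla h|^{2}$ together with $\max r=\max h$ via \eqref{nablau} and Lemma \ref{pro}.

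Next I would secure the geometric control of $\Omega_t$ needed to render the flow uniformly parabolic. By Lemma \ref{Volume}, $T(\Omega_t)=T(\Omega_0)$, so Lemma \ref{Graes} gives $T(\Omega_0)\le(\diam\Omega_t)^{2}\,\vol(\Omega_t)$, and combined with the upper bound on $h$ this forces $\vol(\Omega_t)\ge c>0$, hence also $\max h(\cdot,t)\ge c'>0$. Repeating the argument of Lemma \ref{widthbound} supplies $w^{-}_{\Omega_t}\ge 1/C_3$, and the ball-comparison in Remark \ref{remark} then delivers uniform bounds $\widehat{c}\le|\overline{\nabla} U(\cdot,t)|\le c$ on $\partial\Omega_t$ independent of $t$. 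Bounds on $\eta(t)$ follow by the argument of Lemma \ref{etaepsbound}: the upper bound reads $\eta(t)\le\max f\cdot\max_{[0,C]}\psi\cdot|\mathbb{S}^{n-1}|/[(n+2)T(\Omega_0)]$, while the lower bound comes from $h(\cdot,t)\ge c'/2$ on $\Sigma_{1/2}$, giving $\int f\psi(h)\,dx\ge\min_{[c'/2,C]}\psi\cdot\min f\cdot|\Sigma_{1/2}|>0$ and hence $\eta(t)\ge 1/C$.

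With all of this in hand, evaluate \eqref{uSSF} at any minimizer $\bar{x}_t$ of $h(\cdot,t)$. Since $\nabla^{2}h(\bar{x}_t,t)\ge 0$ we have $\det(\nabla^{2}h+hI)\ge h^{n-1}$, so $K(\bar{x}_t,t)\le h(\bar{x}_t,t)^{1-n}$, and using $|\overline{\nabla}U|\ge\widehat{c}$ one obtains
\begin{equation*}
\partial_t h(\bar{x}_t,t)\,\ge\,h(\bar{x}_t,t)\left[\eta(t)-\frac{f(\bar{x}_t)\,\psi(h(\bar{x}_t,t))}{\widehat{c}^{\,2}\,h(\bar{x}_t,t)^{n}}\right].
\end{equation*}
Condition \eqref{good-condition-1} says $\psi(s)/s^{n}\to 0$ as $s\to 0^{+}$, so there is a threshold $s_0>0$, depending only on $\psi$, $\max f$, $\widehat{c}$, and the lower bound of $\eta(t)$, for which the bracket is strictly positive whenever $h(\bar{x}_t,t)<s_0$. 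The parabolic maximum principle then traps $\min_{\mathbb{S}^{n-1}}h(\cdot,t)\ge\min\{s_0,\min h_0\}=:1/C$.

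The main obstacle is the ordering: the lower bound of $\eta(t)$ and the lower bound of $|\overline{\nabla}U|$ must both be secured before the maximum-principle step, but securing $|\overline{\nabla}U|\ge\widehat{c}$ requires a uniform inner-ball radius for $\Omega_t$, which only becomes available after the width estimate. Condition \eqref{good-condition-1} is invoked precisely once---and decisively---to make the bracketed quantity above positive for small $h$, compensating for the absence of the $\varepsilon$-regularization that played the corresponding role in Lemma \ref{lowerbound}.
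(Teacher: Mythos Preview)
Your proposal is correct and follows essentially the same route as the paper: the upper bound and gradient bound via the monotone functional $\mathcal{J}$ (as in Lemma~\ref{upperbound}), the width bound from conservation of $T(\Omega_t)$, the Hopf-type lower bound on $|\overline{\nabla}U|$ from Remark~\ref{remark}, the two-sided bound on $\eta(t)$, and finally the maximum-principle step at the spatial minimizer where \eqref{good-condition-1} forces $\partial_t h(\bar{x}_t,t)\ge 0$ once $h(\bar{x}_t,t)$ is small. The ordering issue you flag is precisely what the paper navigates, and your resolution---securing the width estimate (hence the interior-ball barrier and the $\eta$ bounds) \emph{before} the maximum-principle step---matches the paper's logic.
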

					\begin{proof} Consider the flow defined in \eqref{SSF} or \eqref{uSSF}.  
						By the proofs in   Lemmas \ref{upperbound} and \ref{etaepsbound},  the following statements hold for $t\in [0, T)$: 
						\begin{eqnarray}
							\label{upper11}\max_{x\in \mathbb{S}^{n-1}}h(x,t)\le C,\\
							\label{gradient11}\max_{x\in \mathbb{S}^{n-1}}|\nabla h|(x,t)\le C,\\
							1/C\le\eta(t)\le C,\label{etabound1}
						\end{eqnarray}
						where the constant $C>0$ (abuse of notation) depends only on $f, \psi$ and  $\Omega_0$ but is independent of $t\in [0, T)$.  
						
						Let $\bar{x}_t\in\mathbb{S}^{n-1}$ satisfy $h(\bar{x}_t, t)= \min_{x\in \mathbb{S}^{n-1}}h(x,t)$. Clearly, $\nabla h(\bar{x}_t,t)=0$ and $\nabla^2 h(\bar{x}_t,t)\ge 0$. A calculation similar to that in Lemma \ref{lowerbound}, combining \eqref{estimate-curvature} and Remark \ref{remark}, shows that \begin{eqnarray}
							\nonumber\partial_t h(\bar{x}_t,t) &\geq&-\frac{f(\bar{x}_t)}{\widehat{c}^{2}}\psi(h(\bar{x}_t,t))\left(h(\bar{x}_t,t)\right)^{1-n} +h(\bar{x}_t,t)\eta(t) \\&=&\frac{\psi(h(\bar{x}_t,t))}{h^{n-1}(\bar{x}_t,t)}\eta(t)\left(\frac{h^{n}(\bar{x}_t,t)}{\psi(h(\bar{x}_t,t))}-\frac{f(\bar{x}_t)}{\widehat{c}^{2}\eta(t)}\right).\label{p11}
						\end{eqnarray}
						
						Recall that $\psi$ satisfy \eqref{good-condition-1}, i.e., 
							$\displaystyle\liminf_{s\to0^+}\frac{s^n}{\psi(s)}=\infty$.
						Let $M$ be a finite constant such that $$M>\max_{(x,t)\in  \mathbb{S}^{n-1}\times[0,T)}\Big(\frac{f(x)}{\widehat{c}^{2}\eta(t)}\Big).$$
						Then, there exists $\delta_0$ depending on $M$ so that for $s\in(0,\delta_0)$,
						\begin{equation}\label{M}
							\frac{s^n}{\psi(s)}\geq M.
						\end{equation}
						Without loss of generality, let $h(\bar{x}_t,t)<\delta_0$.  By \eqref{etabound1}, \eqref{p11} and \eqref{M}, one has 
						\begin{eqnarray*}
							\partial_t h(\bar{x}_t,t) \geq \frac{\psi(h(\bar{x}_t,t))}{h^{n-1}(\bar{x}_t,t)}\eta(t)\left(M-\frac{f(\bar{x}_t)}{\widehat{c}^{2}\eta(t)}\right)\geq0.
						\end{eqnarray*} This further yields, for all $t\in [0, T)$,  \begin{eqnarray}
							h(\bar{x}_t,t)=\min_{ \mathbb{S}^{n-1}}h(\cdot,t)\ge\min\big\{\delta_0, h(\bar{x}_0,0)\big\}=: 1/C. \label{lower11}
						\end{eqnarray} 
					\end{proof}

					\begin{lem}{\label{LogC0} } Let $f$ is an even, smooth positive functions on $\mathbb{S}^{n-1}$. Let $\psi\in \mathcal{C}$ be a smooth function. Let $h(\cdot,t)$ for $t\in[0,T)$ be a positive, even, smooth and uniformly convex solution to \eqref{uSSF}.
						Then there is a constant $C>0$ depending only on $f$, $\psi$ and $\Omega_0$, such that, for all $t\in [0, T)$, 
						\begin{eqnarray*} 
							1/C\le \min_{x\in \mathbb{S}^{n-1}} h(x,t)\le\max_{x\in \mathbb{S}^{n-1}} h(x,t)\le C \ \ \mathrm{and} \ \  
							\max_{\mathbb{S}^{n-1}}|\nabla h(x, t)|\le C.
						\end{eqnarray*}
					\end{lem}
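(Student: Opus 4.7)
The plan is to mirror the structure of Lemma \ref{upperbound}, Lemma \ref{etaepsbound}, and Lemma \ref{widthbound}, replacing $\widehat{\Psi}_\varepsilon$ by $\Psi$ and exploiting evenness together with condition (iii) in the class $\mathcal{C}$ to handle the fact that $\Psi$ is no longer non-negative. The argument splits into an upper bound for $h$, a gradient bound, and a lower bound for $h$; the real work is in the upper bound, since the lower bound follows from preservation of $T(\Omega_t)$.

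\textbf{Upper bound for $h$.} By Lemma \ref{mono}, the functional $\mathcal{J}(h(\cdot,t))=\int_{\mathbb{S}^{n-1}} f\,\Psi(h)\,dx$ is non-increasing along \eqref{SSF}, so it is bounded by $C_1:=\mathcal{J}(h_0)$. Let $R_t=\max_{x\in\mathbb{S}^{n-1}} h(x,t)$ attained at $u_t\in\mathbb{S}^{n-1}$. Because $h(\cdot,t)$ is even, the supporting hyperplanes of $\Omega_t$ at $\pm R_t u_t$ give $h(x,t)\ge R_t|u_t\cdot x|$ for all $x\in\mathbb{S}^{n-1}$. Since $\Psi$ is increasing (as $\psi>0$),
\[
C_1\;\ge\;\int_{\mathbb{S}^{n-1}} f(x)\Psi(R_t|u_t\cdot x|)\,dx.
\]
I decompose $\Psi(R_t|u_t\cdot x|)=\Psi(|u_t\cdot x|)+\int_{|u_t\cdot x|}^{R_t|u_t\cdot x|}\psi(\tau)/\tau\,d\tau$. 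Condition (iii) in the definition of $\mathcal{C}$ bounds the first piece from below by $-\widehat{C_1}$ uniformly in $u_t$. For the second piece, the plan is to restrict to the belt $A=\{x:|u_t\cdot x|\ge 1/2\}$, whose measure is a positive constant independent of $u_t$ by rotational invariance. For $R_t\ge 2$ and $x\in A$, the interval $[1,R_t/2]$ lies inside $[|u_t\cdot x|,R_t|u_t\cdot x|]$ and the integrand is positive, so
\[
\int_{|u_t\cdot x|}^{R_t|u_t\cdot x|}\frac{\psi(\tau)}{\tau}\,d\tau\;\ge\;\Psi(R_t/2).
\]
Combining, and using $\psi>0$ outside $A$ to discard that contribution,
\[
C_1\;\ge\;-\widehat{C_1}+\Psi(R_t/2)\cdot\min_{\mathbb{S}^{n-1}}f\cdot|A|.
\]
Since $\Psi(s)\to\infty$ as $s\to\infty$ by property (ii) of $\mathcal{C}$, this forces $R_t\le C$. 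Then $|\nabla h(x,t)|\le r(x,t)\le\max h\le C$ by Lemma \ref{pro} and \eqref{nablau}, giving the gradient bound.

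\textbf{Lower bound for $h$.} Here I use that $T(\Omega_t)=T(\Omega_0)>0$ by Lemma \ref{Volume} (which applies verbatim to \eqref{SSF} with $\psi$ in place of $\widehat{\psi}_\varepsilon$). By Lemma \ref{Graes} and the upper bound just obtained,
\[
T(\Omega_0)=T(\Omega_t)=\int_{\Omega_t}|\overline{\nabla}U|^2\,dX\;\le\;(\mathrm{diam}\,\Omega_t)^2\,\mathrm{Vol}(\Omega_t)\;\le\;C'\,w_{\Omega_t}^{-},
\]
since a body of bounded diameter has volume controlled by (diameter)$^{n-1}$ times minimum width. Because $\Omega_t$ is origin-symmetric, $w_{\Omega_t}^{-}=2\min_{x}h(x,t)$, so
\[
\min_{x\in\mathbb{S}^{n-1}}h(x,t)\;\ge\;\frac{T(\Omega_0)}{2C'}\;=:\;1/C,
\]
independently of $t$. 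This yields the desired two-sided bound.

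\textbf{Main obstacle.} The only substantive point is the upper bound, where $\Psi$ is unbounded below (unlike $\widehat{\Psi}_\varepsilon\ge 0$ used in Lemma \ref{upperbound}); the trick is that evenness produces the pointwise inequality $h\ge R_t|u_t\cdot x|$, while condition (iii) in $\mathcal{C}$ is tailor-made to absorb the (possibly very negative) term $\int f\,\Psi(|u_t\cdot x|)\,dx$ uniformly in the direction $u_t$. Without either of these inputs the $C^0$ bound would fail, which is exactly the reason Theorem \ref{main4} is stated only in the even case and under assumption (iii).
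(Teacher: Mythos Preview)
Your proposal is correct and follows essentially the same approach as the paper's proof: both use the monotonicity of $\mathcal{J}$ from Lemma \ref{mono}, the pointwise inequality $h(x,t)\ge R_t|u_t\cdot x|$ coming from evenness, condition (iii) of class $\mathcal{C}$ to absorb the negative contribution of $\Psi$ near $0$, and the divergence $\Psi(s)\to\infty$ to force the upper bound; then both use preservation of $T(\Omega_t)$ together with $T\le(\mathrm{diam})^2\mathrm{Vol}$ and origin-symmetry for the lower bound. The only cosmetic difference is that you decompose $\Psi(R_t|u_t\cdot x|)=\Psi(|u_t\cdot x|)+\int_{|u_t\cdot x|}^{R_t|u_t\cdot x|}\psi(\tau)/\tau\,d\tau$ additively before splitting the domain, whereas the paper splits the domain into $\Sigma_{1/2}$ and its complement first and then bounds each piece; the underlying estimates are identical.
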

					
					\begin{proof}
						Since $\psi\in \mathcal{C}$, that $\Psi(s)$ is non-decreasing, and
						
						\begin{equation*}
							\left\{
							\begin{array}{ll}
								\Psi(s)\geq 0,\quad  s\geq1 ,\\\\
								\Psi(s)\leq0,\quad   0<s\leq1.
							\end{array}
							\right.
						\end{equation*}

						Let $x_t\in\mathbb{S}^{n-1}$ satisfy  $h(x_t, t) =\max_{x\in \mathbb{S}^{n-1}} h(x, t)$. Without loss of generality, we assume that $h(x_t, t)>10$ holds for some $t\in [0, T)$. Otherwise, we have done. Let $$\Sigma_{\beta}=\{x\in \mathbb{S}^{n-1}: |\langle x, x_t\rangle |\ge \beta\}.$$
						
						Combining with \eqref{function} and Lemma \ref{mono},  there exists a constant $C_0>0$, such that, 
						\begin{eqnarray}\label{intbound1}
							C_0\geq 
							\mathcal{J} \big(h_0\big)&\ge&\mathcal{J}\big(h(\cdot, t)\big)=\int_{\mathbb{S}^{n-1}}f(x)\Psi(h)dx\nonumber\\
							&\geq&\int_{\mathbb{S}^{n-1}}f(x)\Psi(h(x_t,t)|x\cdot x_t|)dx\nonumber\\
							&\geq&\Psi(\frac{1}{2}h(x_t,t))\int_{\Sigma_{1/2}}f(x)dx+\int_{\mathbb{S}^{n-1}\setminus \Sigma_{1/2}}f(x)\Psi(|x\cdot x_t|)dx\nonumber\\
							&\geq&\Psi(\frac{1}{2}h(x_t,t))\int_{\Sigma_{1/2}}f(x)dx+\int_{\mathbb{S}^{n-1}}f(x)\Psi(|x\cdot x_t|)dx\nonumber\\
							&\geq&\Psi(\frac{1}{2}h(x_t,t))\int_{\Sigma_{1/2}}f(x)dx-\widehat{C_1}\nonumber.
						\end{eqnarray}
						where the last inequality is because of $\psi\in \mathcal{C}$. Since $\lim_{s\rightarrow +\infty}\Psi(s)=+\infty$, it implies that $h(x, t)$ is uniformly bounded above.
						
						From the proof of Lemma \ref{widthbound}, we can get $\omega_{\Omega_t}^{-}\geq 1/C_3$, since $\Omega_t$ is O-symmetric, so $h(x, t)$ has a uniformly lower bound. The $C^0$ estimate can implies $C^1$ estimate, the lower and upper bounds of $\eta(t)$ and Remark \ref{remark}.

					\end{proof}

					\section{Solutions to the extended Orlicz problem for  Torsional Rigidity} \label{section-4} 
					This section is devoted to prove the existence of solutions to the extended Orlicz-Minkowski problem for torsional rigidity. 
					We first prove Theorem \ref{main3} in Section \ref{sec-main-3-1}, 
					which provides a smooth solution to the extended Orlicz-Minkowski problem for torsional rigidity, and Theorem \ref{main4} can aslo be proved in this section. 
					By the technique of approximation, Theorem \ref{main1} is proved in Section \ref{sec-main-1-1}. 
					This gives a solution to the extended Orlicz-Minkowski problem for torsional rigidity for general data.

					\subsection{Proof of Theorems \ref{main3} and  \ref{main4}} \label{sec-main-3-1} 
					{~}
					
					With the help of the $C^2$-estimates established in Lemmas \ref{lowerboundsigma_n} and \ref{boundsprincipleradii} in Section \ref{section-5}, we can prove Theorem \ref{main3}.  
					
					\begin{proof} [Proof of Part $(\mathrm{ii})$ in Theorem \ref{main3}.] Following the notations in Section \ref{section-3}, for  $\Omega_0\in\mathcal{K}_0$, let 
						$\mathcal{M}_0=\partial \Omega_0$ be a closed, smooth and uniformly convex hypersuface. The support function of $\Omega_0$ is $h_0$ and assume \eqref{ep}, namely $T(\Omega_0)\ge (2c_0)^{n+2}\ge(20\varepsilon)^{n+2}$ for some constant $c_0>0$ and for small $\varepsilon \in (0, 1)$. Let $X_\varepsilon(\cdot,t)$ be the solution to \eqref{SF2}, $h_\varepsilon(\cdot,t)$ be the support function of $\Omega_t^\varepsilon$ and  $\mathcal{M}_t^\varepsilon=\partial \Omega_t^\varepsilon$. Let $T$ be the maximal time such that $h_\varepsilon(\cdot,t)$ is positive, smooth and uniformly convex for all $t\in[0,T)$.  
						
						First of all, under the conditions on $f$ and $\psi$ as stated in  Part $(\mathrm{ii})$ of Theorem \ref{main3}, one can find a constant $\overline{C}_\varepsilon>0$ (depending on $\varepsilon$ but independent of $t$), such that, the principal curvature radii of $\mathcal{M}^{\varepsilon}_t$ is bounded from above and below. That is, for all $t\in [0, T)$, one has, 
						\begin{eqnarray}\label{sigma lower bonud}
							\overline{C}_\varepsilon^{-1} I\le\nabla^2h_{\varepsilon}(\cdot, t)+h_{\varepsilon}(\cdot, t)I\le \overline{C}_\varepsilon I.
						\end{eqnarray} In fact, conditions \eqref{cond1}, \eqref{cond2} and \eqref{cond3} follow from Lemmas \ref{upperbound}-\ref{lowerbound}.   Hence,  \eqref{sigma lower bonud} is a direct consequence of Lemmas \ref{lowerboundsigma_n} and \ref{boundsprincipleradii}, where $h(\cdot, t)$, $\eta(t)$ in Lemmas \ref{lowerboundsigma_n} and \ref{boundsprincipleradii} are replaced by $h_{\varepsilon} =h_{\varepsilon}(\cdot, t)$,  $\eta_\varepsilon(t)$.

						In view of \eqref{sigma lower bonud} and Lemmas \ref{upperbound}-\ref{lowerbound}, the flow \eqref{uSF2} is uniformly parabolic and then $|\partial_th_\varepsilon|_{L^\infty( \mathbb{S}^{n-1}\times[0,T))}\le \overline{C}_\varepsilon$ (abuse of notation $\overline{C}_\varepsilon$). It follows from results of Krylov and Safonov\cite{KS81} that  H\"{o}lder continuity estimates for $\nabla^2h_{\varepsilon}$ and $\partial_th_{\varepsilon}$  can be obtained. Hence, the standard theory of linear uniformly parabolic equations can be used to get the higher derivatives estimates, which further implies the the long time existence of a positive, smooth and uniformly convex solution to the flow \eqref{uSF2}. Moreover, 
						\begin{eqnarray}
							\label{est1}&&1/C_\varepsilon\le h_\varepsilon(x,t)\le C\ \ \ \mathrm{for\ all}\ (x,t)\in \mathbb{S}^{n-1}\times[0,\infty),\\
							\label{est2}&&1/C\le|\eta_\varepsilon(t)|\le C\ \ \ \mathrm{for\ all}\ t\in[0,\infty),\\
							\label{est3}&&\nabla^2h_\varepsilon+h_\varepsilon I\ge1/C_\varepsilon\ \ \ \mathrm{on}\ \, \mathbb{S}^{n-1}\times[0,\infty),\\
							\label{est4}&&|h_\varepsilon|_{C_{x,t}^{k,l}( \mathbb{S}^{n-1}\times[0,\infty))}\le C_{\varepsilon,k,l},
						\end{eqnarray}
						where $C_\varepsilon$ and $ C_{\varepsilon,k,l}$ are constants depending on $\varepsilon,\psi, f$ and  $\Omega_0$. Together with Lemma \ref{monotone1},  one has $0\leq \mathcal{J}_\varepsilon(h_{\varepsilon}(\cdot,t)) \le \mathcal{J}_\varepsilon(h_{0})$ for all $t\in[0,\infty)$, and hence   \begin{eqnarray}\label{J equ}  \int_{0}^{\infty} \Big|\frac{d}{dt}\mathcal{J}_\varepsilon(h_{\varepsilon}(\cdot,s))\Big| \,ds =\mathcal{J}_\varepsilon(h_{0})-
							\lim_{t\to\infty}\mathcal{J}_\varepsilon(h_{\varepsilon}(\cdot,t))\leq \mathcal{J}_\varepsilon(h_{0}).
						\end{eqnarray} This further yields the existence of a sequence $t_i\to\infty$ such that
						\[\frac{d}{dt}\mathcal{J}_\varepsilon(h_{\varepsilon}(\cdot,t_i))\to 0\ \ \ \mathrm{as}\,\,t_i\to\infty.\]
						
						By virtue of \eqref{est1}-\eqref{est4} and Lemma \ref{monotone1}, $\{h_\varepsilon(\cdot, t_i)\}_{i\in \mathbb{N}}$ is uniformly bounded and then a subsequence of $\{h_\varepsilon(\cdot, t_i)\}_{i\in \mathbb{N}}$ (which will be still denoted by $\{h_\varepsilon(\cdot, t_i)\}_{i\in \mathbb{N}}$)  converges to a positive and uniformly convex function $h_{\varepsilon,\infty}\in C^\infty(\mathbb{S}^{n-1})$ satisfying  that  \begin{equation}  {h_{\varepsilon,\infty}}(x)  |\overline{\nabla}U\big(r_{\varepsilon,\infty}(\xi)\cdot\xi\big)|^2 \det\!\big(\nabla^2h_{\varepsilon,\infty}(x)\!+\! h_{\varepsilon,\infty}(x)I\big)\!=\!\gamma_{\varepsilon}f(x)\widehat{\psi}_\varepsilon(h_{\varepsilon,\infty}), \label{equation-infty-1}\end{equation} 
						where $\xi=\alpha^*_{\varepsilon, \infty}(x)$, $r_{\varepsilon,\infty}(\xi)=(h_{\varepsilon, \infty}^2(x)+|\nabla h_{\varepsilon,\infty}(x)|^2)^{1/2}$ and   $\gamma_\varepsilon$ is a constant given by
						\[\gamma_\varepsilon=\lim_{t_i\to\infty}\frac{1}{\eta_\varepsilon(t_i)}=\frac{(n+2)T(\Omega_{\varepsilon,\infty})}{\int_{ \mathbb{S}^{n-1}}f(x)\widehat{\psi}_\varepsilon(h_{\varepsilon,\infty})dx}.\] Clearly, $h_{\varepsilon,\infty}\geq 1/C_{\varepsilon}$ on $ \mathbb{S}^{n-1}$ and then  $\Omega_{\varepsilon,\infty}\in\mathcal{K}_0$ (where $\Omega_{\varepsilon,\infty}$ is the convex body determined by $h_{\varepsilon,\infty}$). It follows from \eqref{equation-infty-1} that, for any Borel set $\omega\subset  \mathbb{S}^{n-1}$,  \begin{eqnarray}\label{varepsilon eq}
							\int_{\omega }h_{\varepsilon,\infty}(x) \,d\mu_{tor}(\Omega_{\varepsilon,\infty},x)=\gamma_\varepsilon\int_{\omega} \widehat{\psi}_\varepsilon(h_{\varepsilon,\infty})\,d\mu(x).
						\end{eqnarray} 
						
						Recall that Lemma \ref{monotone1} then implies that $T(\Omega_0)=T(\Omega_t^{\varepsilon})$, and by Lemma \ref{TVC}, we have $$T(\Omega_0)=\lim_{i\rightarrow \infty} T(\Omega_{t_i}^{\varepsilon})=T(\Omega_{\varepsilon,\infty}).$$ That is, $\Omega_{\varepsilon,\infty}\in \mathcal{K}_V$ with  
						\begin{eqnarray}\label{KV}
							\mathcal{K}_V=\Big\{\mathbb{K}\in\mathcal{K}: T(\mathbb{K})=T(\Omega_0)\Big\}.
						\end{eqnarray}  Together with \eqref{functional} and   Lemma \ref{monotone1}, one sees that $\Omega_{\varepsilon,\infty}$ solves the following optimization problem:  \[\inf\Big\{\int_{ \mathbb{S}^{n-1}}f(x)\widehat{\Psi}_\varepsilon(h_\mathbb{K}(x))\,dx: \mathbb{K}\in\mathcal{K}_V\Big\}.\]  
						
						It follows from Lemmas \ref{etaepsbound} and \ref{widthbound} that $\frac{1}{C}\le w_{\Omega_{\varepsilon,\infty}}^-\le w_{\Omega_{\varepsilon,\infty}}^+\le C$ and  $\frac{1}{C}\le\gamma_\varepsilon\le C$, respectively, where $C$ is a constant independent of $\varepsilon$.  Hence, a constant $\gamma_0>0$ and  a sequence $\varepsilon_i\to 0$ can be found so that  $\gamma_{\varepsilon_i}\to\gamma_0$, by Lemma \ref{TVC}, one can even assume that $\Omega_{\varepsilon_i,\infty}$ converges to a $\Omega_\infty\in\mathcal{K}_V$ in the Hausdorff metric. Note that, by Lemma \ref{TMC},  $ \mu_{tor}(\Omega_{\varepsilon_i,\infty})\rightarrow  \mu_{tor}(\Omega_{\infty})$ weakly. It follows from \eqref{varepsilon eq}, the fact that $\widehat{\psi}_{\varepsilon}(s)$ is bounded on $[0, C]$ and $\varepsilon\in (0, 1)$, and the dominated convergence theorem that, for each Borel set $\omega\subset \S^n$,  
						\begin{equation}\label{equ-solved-1} \int_{\omega}h_{\Omega_{\infty}} \,d\mu_{tor}(\Omega_{\infty},x)=\gamma_0\int_{\omega}\  \psi(h_{\Omega_{\infty}})\,d\mu(x). \end{equation}  Moreover, $\Omega_\infty$ satisfies
						\begin{eqnarray}\label{opti-123}
							\int_{\S^n}f\Psi(h_{\Omega_{\infty}})dx=\inf\Big\{\int_{ \mathbb{S}^{n-1}}f(x)\Psi(h_\mathbb{K})dx: \mathbb{K}\in\mathcal{K}_V\Big\}.
						\end{eqnarray}
						Clearly, equation \eqref{equ-solved-1} can be reformulated as $$  \,h_{\Omega_{\infty}} \,d\mu_{tor}(\Omega_{\infty},\cdot)=\gamma_0\psi(h_{\infty})\,d\mu,$$ where the constant $\gamma_0$, namely,\[\gamma_0=\frac{(n+2)T(\Omega_{\infty})}{\int_{ \mathbb{S}^{n-1}}f(x)\psi(h_{\Omega_{\infty}})dx}.\] 
						So $\Omega_{\infty}$ satisfies \eqref{OMPT}, and this concludes the proof of Part $(\mathrm{ii})$ in Theorem \ref{main3}. \end{proof}

					\begin{proof}[Proof of Part $(\mathrm{i})$ in Theorem \ref{main3}.] In this case, we assume that \eqref{good-condition-1}  holds, i.e., 
						\[\liminf_{s\to0^+}\frac{s^n}{\psi(s)}=\infty.\]  
					It follows from \eqref{upper11}-\eqref{etabound1} and \eqref{lower11} that \begin{eqnarray}\label{C2esti1}
						C^{-1}I\le\nabla^2h+hI\le CI. 
					\end{eqnarray} This is  a direct consequence of Lemmas \ref{lowerboundsigma_n} and \ref{boundsprincipleradii}.\\
					\indent Following the same argument as those in the proof of Part $(\mathrm{ii})$ in Theorem \ref{main3}, we can show that $h(\cdot,t)$ remains a positive, smooth and uniformly convex solution of the flow \eqref{uSSF} for all time $t>0$, due to the above priori estimates \eqref{upper11}-\eqref{etabound1}, \eqref{lower11} and \eqref{C2esti1}.  By Lemma \ref{mono},  
					\begin{eqnarray}
						\frac{d}{dt}\mathcal{J}(h(\cdot,t))\le0\ \ \ \mathrm{for\ all} \  t\in[0, \infty).
					\end{eqnarray}
					Hence, a sequence $t_i\to\infty$ can be found so that 
					\[\frac{d}{dt}\mathcal{J} (h(\cdot,t_i))\to 0\ \ \ \mathrm{as}\,\,t_i\to\infty,\] and  $h(\cdot,t_i)$ converges smoothly to a positive, smooth and uniformly convex function $h_\infty$ solving \eqref{elliptic}, where 
					\[\gamma=\displaystyle\lim_{t_i\to\infty}\frac{1}{\eta(t_i)}=\frac{(n+2)T(\Omega_{\infty})}{\int_{ \mathbb{S}^{n-1}}f(x)\psi(h_{\infty})dx},\] where $h_{\infty}$ is the support function of the convex body $\Omega_{\infty}\in \mathcal{K}_0$. In particular, $\Omega_{\infty}$ satisfies  \eqref{OMPT} with $\tau=\gamma$ and \eqref{opti-123}.
				\end{proof}

				\begin{proof}[Proof of Theorem \ref{main4}.]
					In this case, we assume $\Omega_t$ is O-symmetric. By Lemma \ref{LogC0}, the same as Proof of Part $(\mathrm{i})$ in Theorem \ref{main3}, we can get a  positive, even, smooth and uniformly convex function $h_\infty$ solving \eqref{elliptic}.

				\end{proof}

				\subsection{Proof of Theorem \ref{main1} } \label{sec-main-1-1}
				{~}
				
				Recall the definition of $\mathcal{K}_V$ by \eqref{KV}. Suppose that $\psi$ satisfy the conditions in Theorem \ref{main1}, one can obtain the existence of convex bodies in the following corollary.
				\begin{cor}\label{Part ii}
					Let $\psi$ be as in Theorem \ref{main1} and $f$ be a smooth positive function on $\S^{n-1}$, then there exist $\gamma>0$ and $\Omega\in\mathcal{K}_V$ such that $\Omega$ satisfies 
					\[\gamma \int_{\omega} \psi(h_{\Omega})\,d\mu(x)= 
					\int_{\omega}\, \,h_{\Omega} \,d\mu_{tor}(\Omega,x),\,\forall\, \textrm{Borel set}\,\,\omega\subset  \mathbb{S}^{n-1},\]and \[\int_{\S^{n-1}}f\Psi(h_\Omega)dx=\inf\Big\{\int_{ \mathbb{S}^{n-1}}f\Psi(h_\mathbb{K})dx: \mathbb{K}\in\mathcal{K}_V\Big\}.\]
				\end{cor}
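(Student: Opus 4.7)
The plan is to extract the statement directly from Theorem~\ref{main3}, since both cases of that theorem already produce a convex body with the required equation, torsional-rigidity constraint, and minimization property; the corollary is essentially a consolidation. I would split the argument according to whether $\psi$ satisfies \eqref{good-condition-1} or \eqref{good-condition-2}.

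First I would invoke Theorem~\ref{main3}. In case \eqref{good-condition-1}, Part~(i) yields a positive, smooth, uniformly convex support function $h_\infty$ on $\S^{n-1}$ solving \eqref{elliptic}, giving $\Omega_\infty\in\mathcal{K}_0\subset\mathcal{K}$, and the constant $\gamma=(n+2)T(\Omega_\infty)/\int_{\S^{n-1}}f\,\psi(h_\infty)\,dx>0$. In case \eqref{good-condition-2}, Part~(ii) constructs, via the approximation $\widehat{\psi}_\varepsilon\to\psi$ and the subsequential limit $\varepsilon_i\to 0$, a convex body $\Omega_\infty\in\mathcal{K}$ satisfying $h_{\Omega_\infty}\,d\mu_{tor}(\Omega_\infty,\cdot)=\gamma_0\,\psi(h_{\Omega_\infty})\,d\mu$ (cf.~\eqref{equ-solved-1}). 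Integrating this measure identity over an arbitrary Borel set $\omega\subset\S^{n-1}$ produces the first displayed formula of the corollary with $\gamma=\gamma_0>0$.

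Next I would verify $\Omega_\infty\in\mathcal{K}_V$ in both cases, which is already built into the proof of Theorem~\ref{main3}. Along the flow \eqref{SSF} (respectively \eqref{SF2}), the torsional rigidity is conserved by Lemma~\ref{Volume} (respectively Lemma~\ref{monotone1}); hence every $\Omega_{t_i}$ lies in $\mathcal{K}_V$, and by the continuity of $T$ under Hausdorff convergence (Lemma~\ref{TVC}) the limit $\Omega_{\varepsilon,\infty}$ and, in case~(ii), the further limit $\Omega_\infty$ satisfy $T(\Omega_\infty)=T(\Omega_0)$.

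Finally, for the minimization property I would use the monotonicity in Lemmas~\ref{mono} and \ref{monotone1}: the functionals $\mathcal{J}$ and $\mathcal{J}_\varepsilon$ are non-increasing along the flow while the torsional rigidity stays fixed, so the flow drives the support function to a critical point of $\mathcal{J}$ on $\mathcal{K}_V$; in case~(ii) the identity \eqref{opti-123} is recorded explicitly, and the parallel identity in case~(i) is obtained by the same Lagrange-type reading of the stationary equation \eqref{elliptic} under the constraint $T(\cdot)=T(\Omega_0)$. The one step I would treat with care, and which I expect to be the main (though mild) obstacle, is the passage $\varepsilon_i\to 0$ in case~(ii): to move from a $\widehat{\Psi}_{\varepsilon_i}$-minimizer over $\mathcal{K}_V$ to a $\Psi$-minimizer, I would invoke dominated convergence, using the uniform two-sided width bounds for $\Omega_{\varepsilon_i,\infty}$ coming from Lemmas~\ref{upperbound} and \ref{widthbound} together with the pointwise convergence $\widehat{\Psi}_{\varepsilon_i}\to\Psi$ on compact subsets of $(0,\infty)$; the rest is simply collecting what Theorem~\ref{main3} has already established.
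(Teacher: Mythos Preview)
There is a genuine gap. The hypothesis of the corollary is ``$\psi$ as in Theorem~\ref{main1}'', i.e.\ $\psi\in\mathcal{B}$, which only requires continuity; Theorem~\ref{main3}, on the other hand, assumes $\psi$ is \emph{smooth}. Your proposal treats the corollary as a mere consolidation of the two cases of Theorem~\ref{main3} and invokes that theorem (and the internal $\varepsilon_i\to 0$ limit of its Part~(ii)) directly. But you cannot apply Theorem~\ref{main3} at all without first upgrading $\psi$ to a smooth function. The paper's proof is precisely this missing step: one takes a sequence of smooth approximants $\psi_k\to\psi$, applies (the proof of) Theorem~\ref{main3} to each $\psi_k$ to obtain $\Omega_k\in\mathcal{K}_V$ with the corresponding equation and minimization property, and then passes to the limit $k\to\infty$ using the $\varepsilon$-independent width bounds (Lemma~\ref{widthbound}), Blaschke selection, and Lemmas~\ref{TVC}--\ref{TMC}.

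Everything you wrote about $T(\Omega_t)=T(\Omega_0)$, the monotonicity of $\mathcal{J}_\varepsilon$, and the $\varepsilon_i\to 0$ passage is already contained in the proof of Theorem~\ref{main3} and is not what the corollary adds; the sole content of the corollary is dropping the smoothness hypothesis on $\psi$ via approximation. Once you add that outer approximation layer, the stability argument you sketched for the minimization property (dominated convergence with uniform width bounds) is the right idea, but it should be applied to the sequence $\psi_k\to\psi$, not to the internal $\widehat{\psi}_{\varepsilon_i}\to\psi$ sequence.
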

				\begin{proof}
					Using standard approximation for the function $\psi$ in Theorem \ref{main1}, we can easily obtained the result by the proof of Theorem \ref{main3} directly.
				\end{proof}
				
				The proof of the following result can be found in e.g., \cite[Lemma 3.7]{CL19}.  
				\begin{lem}\label{mu}
					Let $\mu$ be a non-zero finite Borel measure on $ \mathbb{S}^{n-1}$. There is a sequence of positive smooth functions $f_j$ defined on $ \mathbb{S}^{n-1},$ such that $\,d\mu_j=f_j\,d\xi$  converges to $\mu$ weakly.
				\end{lem}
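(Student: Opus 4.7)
The plan is to construct the approximating densities by convolving $\mu$ against a smooth rotationally invariant mollifier on the sphere and then adding a small strictly positive perturbation to guarantee positivity. Concretely, I would fix a non-negative smooth kernel $\Phi_j \colon [-1,1]\to [0,\infty)$ supported in $[1-1/j, 1]$, normalized so that $\int_{\mathbb{S}^{n-1}}\Phi_j(\langle x,y\rangle)\,d\xi(y)=1$ for every $x\in\mathbb{S}^{n-1}$ (rotational symmetry makes this integral independent of $x$). One could equivalently take $\Phi_j$ to be the heat kernel on $\mathbb{S}^{n-1}$ at time $1/j$; the essential point is smoothness and concentration near the diagonal.

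Next I would set
\begin{equation*}
g_j(x)=\int_{\mathbb{S}^{n-1}}\Phi_j(\langle x,y\rangle)\,d\mu(y),\qquad f_j(x)=g_j(x)+\tfrac{1}{j}.
\end{equation*}
Smoothness of $g_j$ follows by differentiating under the integral sign, using that $\Phi_j$ is $C^\infty$ in $x$ with all derivatives bounded and $\mu$ is finite. Since $\Phi_j\ge 0$, we have $g_j\ge 0$, so $f_j\ge 1/j>0$, giving the required positivity.

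To verify weak convergence $d\mu_j=f_j\,d\xi\rightharpoonup d\mu$, I would test against an arbitrary $\varphi\in C(\mathbb{S}^{n-1})$. Fubini's theorem gives
\begin{equation*}
\int_{\mathbb{S}^{n-1}}\varphi(x) g_j(x)\,d\xi(x)=\int_{\mathbb{S}^{n-1}}\Big(\int_{\mathbb{S}^{n-1}}\varphi(x)\Phi_j(\langle x,y\rangle)\,d\xi(x)\Big)\,d\mu(y),
\end{equation*}
and the inner integral equals the spherical convolution $(\varphi*\Phi_j)(y)$, which converges uniformly on $\mathbb{S}^{n-1}$ to $\varphi(y)$ as $j\to\infty$ (standard mollification, using uniform continuity of $\varphi$ on the compact sphere). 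Since $\mu$ is finite, the dominated convergence theorem yields $\int\varphi g_j\,d\xi\to\int\varphi\,d\mu$. The extra term contributes $\tfrac{1}{j}\int\varphi\,d\xi\to 0$, so $\int\varphi\,d\mu_j\to\int\varphi\,d\mu$, which is exactly weak convergence.

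There is no substantial obstacle: the only technical point is producing a smooth, rotationally invariant, properly normalized mollifier on $\mathbb{S}^{n-1}$, which is standard. The perturbation $1/j$ is needed purely to upgrade non-negativity to strict positivity, a requirement that is harmless for the weak limit because its contribution vanishes. If desired, $f_j$ can be further rescaled by $\mu(\mathbb{S}^{n-1})/\int f_j\,d\xi$ to match the total mass exactly, but this is unnecessary for weak convergence.
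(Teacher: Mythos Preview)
Your proof is correct and is the standard mollification argument on the sphere. The paper does not give its own proof of this lemma; it simply cites \cite[Lemma 3.7]{CL19}, so there is nothing to compare against beyond noting that your construction (spherical convolution against a concentrating zonal kernel, plus a vanishing positive constant to force strict positivity) is exactly the expected approach and almost certainly what the cited reference does.
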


				\begin{proof}[Proof of Theorem \ref{main1}]
					Let $f_j$ be a sequence of positive and smooth functions on $ \mathbb{S}^{n-1}$ such that the measures $\mu_j$ converge to $\mu$ weakly as $j\to\infty$. By  Theorem \ref{main3},  there are $\gamma_j>0$ and $\Omega_j\in\mathcal{K}_V$
					such that
					\begin{eqnarray}\label{gdomp}
						\gamma_j \int_{\omega} \psi(h_{\Omega_j})\,d\mu_j(x)= 
						\int_{\omega}\, \,h_{\Omega_j} \,d\mu_{tor}(\Omega_j,x), 
					\end{eqnarray}  for any Borel set $\omega\subset  \mathbb{S}^{n-1}$, and \begin{eqnarray}\label{kv}
						\int_{ \mathbb{S}^{n-1}}f_j\Psi(h_{\Omega_j})dx&=&\inf\Big\{\int_{ \mathbb{S}^{n-1}}f_j\Psi(h_\mathbb{K})dx:\mathbb{K}\in\mathcal{K}_V\Big\}.
					\end{eqnarray}
					Clearly, the constants $\gamma_j$ can be calculated by 
					\begin{eqnarray}\label{constant-g-j} \gamma_j=\frac{(n+2)T(\Omega_j)}{\int_{ \mathbb{S}^{n-1}} \psi(h_{\Omega_j})d\mu_j(x)}.
					\end{eqnarray} 
					
					Let ${h_j}_{\max}=\displaystyle\max_{x\in \mathbb{S}^{n-1}}h_{{\Omega}_j}(x)$ and $\bar{x}_j\in \mathbb{S}^{n-1}$ satisfy that  ${h_j}_{\max}=h_{{\Omega}_j}(\bar{x}_j)$. Then
					\begin{eqnarray}
						h_{{\Omega}_j}(x)\ge \langle x, \bar{x}_j\rangle {h_j}_{\max} \ \ \ \ \mathrm{for\ all}\ x\in \mathbb{S}^{n-1}\ \mathrm{with}\ \langle x, \bar{x}_j\rangle >0.
					\end{eqnarray}
					Denote by $\textbf{B}^{n}_t$  the origin-symmetric Euclidean ball with radius $t>0$. Note that
					\begin{eqnarray}
						T(\textbf{B}^{n}_t)=\int_{\textbf{B}^{n}_t}|\overline{\nabla}U|^2dx,
					\end{eqnarray} and the function $t \mapsto T(\textbf{B}^{n}_t)$ is continuous and monotone increasing, then there exists a $t_0>0$ such that $T(\textbf{B}^{n}_{t_0})=T(\Omega_0)$. That is, $\textbf{B}^{n}_{t_0}\in\mathcal{K}_V$, which further yields, by \eqref{kv}, that for all $j\in \mathbb{N}$, 
					\begin{eqnarray}\label{le}
						\int_{ \mathbb{S}^{n-1}}f_j\Psi(h_{\textbf{B}^{n}_{t_0}})dx \ge\int_{\mathbb{S}^n}f_j\Psi(h_{\Omega_j})dx.
					\end{eqnarray}
					Since $f_jdx\rightarrow \,d\mu$ weakly and $\mu$ is not concentrated on any closed hemisphere, there exists a constant  $c_{\mu}>0$, such that
					\begin{eqnarray}\label{NCHj}
						\int_{ \mathbb{S}^{n-1}}\langle x,\bar{x}_j\rangle_{+}f_jdx>c_{\mu} \ \ \ \mathrm{for\ all}\ j\in \mathbb{N},
					\end{eqnarray} where $a_+=\max\{a, 0\}$ for $a\in \mathbb{R}$. To this end, assume the contrary, there exists a subsequence of $j$ (which will still be denoted by $j$) such that $x_j\rightarrow x_0\in  \mathbb{S}^{n-1}$ and 
					\begin{eqnarray}
						\int_{ \mathbb{S}^{n-1}}\langle x,x_0\rangle_{+}d\mu= \lim_{j\rightarrow \infty} \int_{ \mathbb{S}^{n-1}}\langle x,\bar{x}_j\rangle_{+}f_jdx = 0.
					\end{eqnarray}
					Consequently, one gets 
					\begin{eqnarray*}
						0=\int_{ \mathbb{S}^{n-1}}\langle x,x_0\rangle_{+}d\mu \ge\frac{\mu(\{x\in \mathbb{S}^{n-1}: \langle x,  x_0\rangle>1/N\})}{N}. 
					\end{eqnarray*} This  implies $\mu(\{x\in \mathbb{S}^{n-1}: \langle x, x_0\rangle>1/N\})=0$ for all $N>1$, which contradicts with, after taking $N\rightarrow \infty$,  the fact that $\mu$ is not concentrated on any closed hemisphere. 
					
					By  \eqref{NCHj},  for any $\zeta\in (0, 1)$, one has 
					\begin{eqnarray*}
						c_{\mu} \le\int_{ \mathbb{S}^{n-1}}\langle x,\bar{x}_j\rangle_+f_jdx
						&=&\int_{\{x\in \mathbb{S}^{n-1}:  \langle x, \bar{x}_j\rangle>\zeta\}}\langle x,\bar{x}_j\rangle_+f_jdx+\int_{\{x\in \mathbb{S}^{n-1}:   \langle x, \bar{x}_j\rangle\leq \zeta\}}\langle x,\bar{x}_j\rangle_+f_jdx
						\\&\le&\int_{\{x\in \mathbb{S}^{n-1}:  \langle x, \bar{x}_j\rangle>\zeta\}}f_jdx+\zeta \int_{ \mathbb{S}^{n-1}} f_jdx,
					\end{eqnarray*} since $0<\langle x,\bar{x}_j\rangle_+<1$.  
					Hence
					\begin{eqnarray}
						\int_{\{x\in \mathbb{S}^{n-1}:  \langle x, \bar{x}_j\rangle>\zeta\}}f_jdx \ge c_{\mu}-\zeta \int_{ \mathbb{S}^{n-1}}f_jdx.
					\end{eqnarray}
					As $\,d\mu_j= f_jdx$ converges to $\mu$ weakly, one can choose $\zeta_1 \in (0, 1)$ to be a small enough constant such that, for all $j\in \mathbb {N}$,   $$\int_{\{x\in \mathbb{S}^{n-1}:  \langle x, \bar{x}_j\rangle>\zeta_1\}}f_jdx\geq c_{\mu}-\zeta_1  \int_{ \mathbb{S}^{n-1}}f_jdx>\frac{c_{\mu}}{2}.$$ 
					Consequently, it follows that
					\begin{eqnarray}\label{ge}
						\int_{ \mathbb{S}^{n-1}}\Psi(h_{\Omega_j})f_jdx&\ge& \int_{\{x\in \mathbb{S}^{n-1}:  \langle x, \bar{x}_j\rangle>\zeta_1\}} \Psi({h_j}_{\max}  \langle x, \bar{x}_j\rangle)f_jdx
						\nonumber\\&\ge& \Psi(\zeta_1 {h_j}_{\max}) \int_{\{x\in \mathbb{S}^{n-1}:  \langle x, \bar{x}_j\rangle>\zeta_1\}}f_jdx \nonumber\\&\ge&\frac{c_{\mu}}{2} \cdot\Psi(\zeta_1 {h_j}_{\max}).
					\end{eqnarray}
					
					As $\,d\mu_j= f_jdx$ converges to $\mu$ weakly, \eqref{le}  implies that for all $j\in \mathbb{N}$, 
					\begin{eqnarray*} \int_{ \mathbb{S}^{n-1}}f_j\Psi(h_{\Omega_j})dx\leq \sup_{j\in \mathbb{N}}
						\int_{ \mathbb{S}^{n-1}}f_j\Psi(h_{\textbf{B}^{n}_{t_0}})dx <\infty,
					\end{eqnarray*} this together with \eqref{ge} and the fact that  $\Psi(s)\to\infty$ as $s\to\infty$, one gets  that ${h_j}_{\max}\leq C$, where we use $C>0$ to denote a constant
					which depends only on $\mu$ and $\psi$, but it may change from line to line.\\
					Since $\Omega_j\in\mathcal{K}_V$, using the same argument as in Lemma \ref{widthbound}, we obtain
					\begin{equation}\label{width2}
						1/C \le w_{\Omega_j}^-\le w_{\Omega_j}^+\le C, \quad \forall j.
					\end{equation} 
					\indent By \eqref{width2} and the Blaschke selection theorem, we obtain the existence of a subsequence $\{\Omega_j\}_{j\in \mathbb{N}}$ (which will still be denoted by $\{\Omega_j\}_{j\in \mathbb{N}}$) and a convex body $\Omega\in\mathcal{K}_V$ such that $\Omega_j\rightarrow \Omega.$ 
					It follows from \eqref{gdomp}, \eqref{constant-g-j}, Lemma \ref{TMC} and the dominated convergence theorem we have $\gamma_j\to\gamma>0$ and \begin{eqnarray*}
						\gamma\int_{\omega} \psi(h_{\Omega})\,d\mu(x)= 
						\int_{\omega}\, \,h_{\Omega} \,d\mu_{tor}(\Omega,x),  
					\end{eqnarray*}  for each Borel set $\omega\subseteq  \mathbb{S}^{n-1}$. This  concludes that $\Omega$ satisfies \eqref{OMPT} as desired.
				\end{proof}

				\section{Appendix: $C^2$-estimates} \label{section-5}
				This section is devoted to the second order derivative estimates. Since the equation studied in this paper is not quite the same as the equation in the appendix  of \cite{LSYY21}. In fact, the Lemma A.1. in \cite{LSYY21}, $\Phi$ is smooth function independent of $\Omega_t$, but in this paper the $\overline{\nabla} U$ depends on $\Omega_t$. We can reduce the current case to that one in \cite{LSYY21}. 
				
				\begin{lem}\label{lowerboundsigma_n} Let $T>0$ be a constant  and $h(\cdot, t)$ be a positive, smooth and uniformly convex solution to
					\begin{eqnarray}\label{geneflow}
						\frac{\partial h}{\partial t}(x,t)=-f(x)\psi(h) |\overline{\nabla} U(\overline{\nabla}h,t)|^{-2}\big(\det(\nabla^2 h+hI)\big)^{-1}+h\eta(t).
					\end{eqnarray}  If there is a  positive constant $C_0$ such that,  for all $x\in \mathbb{S}^{n-1}$ and all $t\in [0,T)$,  
					\begin{eqnarray}
						\label{cond1}1/C_0\le h(x,t)\le C_0, \\
						\label{cond2}|\nabla h|(x,t)\le C_0, \\
						\label{cond3}0\le|\eta(t)|\le C_0,
					\end{eqnarray} then the following statement holds: for all $(x, t)\in\mathbb{S}^{n-1}\times [0, T)$, 
					\begin{eqnarray}\label{lower bound sigma}
						\det(\nabla^2h+hI)\ge1/C,
					\end{eqnarray}
					where $C>0$ is a constant depending on $\Omega_0$ (the initial hypersurface), $C_0$, $|\psi|_{L^\infty(D)}$, $|\psi^{'}|_{L^\infty(D)}$with $D=[1/C_0,C_0]$, $\min_{t\in[0,T)}|\overline{\nabla}U|_{L^\infty(\partial\Omega_t)}$ and $\max_{t\in[0,T)}|\overline{\nabla}^k U|_{L^\infty(\partial\Omega_t)}, k=1,2$.
				\end{lem}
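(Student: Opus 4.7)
The plan is to reduce \eqref{geneflow} to the template flow treated in Lemma A.1 of \cite{LSYY21}. Introduce the shorthand
\[\Phi(x,t) := f(x)\,\psi\!\bigl(h(x,t)\bigr)\,\bigl|\overline{\nabla} U(hx+\nabla h,t)\bigr|^{-2},\]
so that \eqref{geneflow} reads $\partial_t h = -\Phi K + h\eta(t)$, which is structurally identical to the flow in the appendix of \cite{LSYY21}, the only new feature being that here $\Phi$ depends on $\Omega_t$ through $\overline{\nabla} U$. The first step is to verify that this nonlocal dependence does not spoil the structural hypotheses: under \eqref{cond1}-\eqref{cond3}, Lemma \ref{Graes}, and the hypothesised bounds on $|\overline{\nabla} U|$ from below and $|\overline{\nabla}^k U|$ from above for $k=1,2$ on $\partial\Omega_t$, the function $\Phi$ and its first two derivatives in the variables $(x, h, \nabla h)$ are uniformly bounded above, while $\Phi$ itself is uniformly bounded below by a positive constant, with all bounds depending only on the quantities listed in the statement.

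Once this reduction is in place, the proof proceeds by the maximum principle applied to an auxiliary function of the form
\[W(x,t) := \log\!\bigl(-\partial_t h + h\eta(t)\bigr) - A\,h(x,t),\]
equivalently $W = \log\Phi + \log K - Ah$, for a sufficiently large constant $A>0$. Differentiating $W$ along the flow and using the linearised Monge-Amp\`ere operator $\mathcal{L} := \Phi K\, b^{ij}\nabla_{ij}$, where $b^{ij}$ is the inverse of $b_{ij} = \nabla_{ij}h + h\delta_{ij}$, one obtains a parabolic inequality for $W$. At an interior maximum of $W$ on $\mathbb{S}^{n-1}\times[0,T)$, the conditions $\nabla W = 0$ and $\nabla^2 W \le 0$, combined with the lower bound on $h$ in \eqref{cond1} and the uniform upper and lower bounds on $\Phi$, force $\log K$ to be bounded above by a constant depending only on the quantities listed in the statement. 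This is equivalent to the desired lower bound $\det(\nabla^2 h + hI) = 1/K \ge 1/C$.

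The main obstacle is the careful bookkeeping of the derivatives of $\Phi$ arising from its dependence on $\overline{\nabla} U$. The chain rule through the boundary parametrisation $y = hx+\nabla h\in\partial\Omega_t$ introduces factors of $\nabla^2 h$ in computing $\nabla \Phi$, which produces additional terms involving $b_{ij}$ on the right-hand side of the evolution of $W$. These terms can be absorbed by enlarging $A$, using precisely the hypothesised uniform bound on $|\overline{\nabla}^2 U|$, in exactly the manner that gradient-of-$\Phi$ terms are handled in \cite[Lemma A.1]{LSYY21}; this is the content of the sentence "We can reduce the current case to that one in \cite{LSYY21}" preceding the lemma. Apart from this additional step, the computation is a routine transcription of the maximum-principle argument in that reference.
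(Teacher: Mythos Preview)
Your proposal is correct and follows exactly the approach of the paper: package the coefficient $f(x)\psi(h)|\overline{\nabla}U(hx+\nabla h,t)|^{-2}$ as a function $\Phi(x,h,\nabla h)$ (with $t$ entering only as a parameter through $U(\cdot,t)$), observe that the hypothesised bounds on $|\overline{\nabla}U|$ and $|\overline{\nabla}^2 U|$ give uniform control of $\Phi$ and its first derivatives in $(x,h,p)$, and then invoke Lemma~A.1 of \cite{LSYY21}. The paper's proof is in fact just this one-line reduction; your additional sketch of the auxiliary function $W=\log(\Phi K)-Ah$ and the maximum-principle computation is precisely the content of that cited lemma, so you have simply unpacked what the paper leaves as a black-box citation.
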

				\begin{proof} Since $\overline{\nabla}h(x, t)=\nabla h(x, t)+h(x, t)x$, we may denote
					$$ f(x)\psi(h) |\overline{\nabla} U(\overline{\nabla}h,t)|^{-2}=f(x)\psi(h) |\overline{\nabla} U(\nabla h(x, t)+h(x, t)x,t)|^{-2}=:\Phi (x, h, \nabla h),$$
					which is just the same as $\Phi$ in Lemma A. 1. in \cite{LSYY21}. So we can get the result.
					
				\end{proof}

					The following lemme provides the boundedness (from both above and below) of the principal curvature radii of $\mathcal{M}_t$. Its proof also comes from \cite{LSYY21}.
					\begin{lem}\label{boundsprincipleradii} Let $T$ be as stated in Lemma \ref{lowerboundsigma_n}. Let  $h(x, t)$ be a positive, smooth and uniformly convex solution to \eqref{geneflow} and satisfy conditions  \eqref{cond1}-\eqref{cond3}. Then, for all $t\in[0,T)$ and $x\in \mathbb{S}^{n-1}$, one has
						\begin{eqnarray}\label{upperlower}
							(\nabla^2h+hI)(x,t)\le C I,\,\,\forall (x, t)\in \mathbb{S}^{n-1}\times [0, T),
						\end{eqnarray}
						where $C$ is a positive constant depending on $C_0$, $h(\cdot,0)$, $|\psi|_{L^\infty(D)}$, $|\psi^{'}|_{L^\infty(D)}$with $D=[1/C_0,C_0]$, $\max_{t\in[0,T)}|\overline{\nabla}^k U|_{L^\infty(\partial\Omega_t)}, k=1,2,3$ and $\min_{t\in[0,T)}|\overline{\nabla} U|_{L^\infty(\partial\Omega_t)}$.
					\end{lem}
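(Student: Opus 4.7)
The plan is to reduce this upper bound on the principal curvature radii to the framework already established in the proof of Lemma \ref{lowerboundsigma_n} (and in the appendix of \cite{LSYY21}). Define the space-dependent factor
$$\Phi(x, h, \nabla h, t) := f(x)\,\psi(h)\,|\overline{\nabla} U(\nabla h + hx, t)|^{-2},$$
so that the flow \eqref{geneflow} becomes $\partial_t h = -\Phi\cdot\bigl(\det(\nabla^2 h + hI)\bigr)^{-1} + \eta(t)\,h$. Under assumptions \eqref{cond1}--\eqref{cond3}, together with the estimates on $\overline{\nabla}^k U$, $k=1,2,3$, from Remark \ref{remark}(iii)(iv) and the width bounds of Lemma \ref{widthbound}, the function $\Phi$ is uniformly $C^2$ in $(x,h,p)$ on the relevant range, with all its derivatives bounded by a constant depending only on the data listed in the statement. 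This puts us in the exact setting of the auxiliary lemma used in \cite{LSYY21}.

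The core estimate then follows the standard parabolic maximum-principle argument. Let $b_{ij} = \nabla^2_{ij} h + h\delta_{ij}$, and consider the test function
$$W(x,t) := \log\lambda_{\max}(b_{ij}(x,t)) - A\log h(x,t) + B\tfrac{1}{2}|\nabla h|^2,$$
with constants $A,B>0$ to be chosen. At a point $(x_0,t_0)$ where $W$ attains a new spatial maximum, pick a local frame diagonalising $b_{ij}$ so that $\lambda_{\max} = b^{11}$. Differentiate the evolution equation twice in the first coordinate, use $\nabla W(x_0,t_0)=0$, $\nabla^2 W(x_0,t_0)\le 0$, and exploit the concavity of $-\log\det$ to absorb the bad quadratic terms produced by the second derivatives of $\Phi$ in $(h,p)$; the lower bound $\det b \geq 1/C$ from Lemma \ref{lowerboundsigma_n} controls the denominators and prevents degeneracy. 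The penalty $-A\log h$ supplies a good term proportional to $\lambda_{\max}$ via the $\eta(t)h$ contribution, while $B|\nabla h|^2/2$ handles the cross derivatives between $\Phi$ and $h$. Choosing $A,B$ large enough forces $\lambda_{\max}(x_0,t_0) \leq C$, which combined with $\lambda_{\max}(\cdot,0)\leq C$ gives \eqref{upperlower} on $\mathbb S^{n-1}\times[0,T)$.

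The main obstacle is controlling the $p$-derivatives of $\Phi$, since $|\overline{\nabla} U|^{-2}$ is evaluated at the moving boundary point $\nabla h + h x \in \partial \Omega_t$; its derivatives in $p$ reduce to tangential derivatives of $\overline{\nabla} U$ along $\partial \Omega_t$. This is precisely where Remark \ref{remark} is essential: part (iii) gives the uniform positive lower bound $|\overline{\nabla} U|\ge \widehat{c}>0$ so that $|\overline{\nabla}U|^{-2}$ is smooth, and part (iv) gives uniform-in-$t$ bounds on $|\overline{\nabla}^k U|_{L^\infty(\partial\Omega_t)}$ for $k\le 3$, which are enough to bound $\Phi, D\Phi, D^2\Phi$ uniformly. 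Once these hypotheses are verified, the calculation is structurally identical to the one in \cite{LSYY21}, and one simply invokes that argument; thus the proof reduces to recording the above reduction and citing the corresponding lemma.
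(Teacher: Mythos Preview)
Your proposal is correct and follows essentially the same approach as the paper: both reduce to \cite{LSYY21} by packaging $f(x)\psi(h)|\overline{\nabla}U(\nabla h+hx,t)|^{-2}$ into a single function $\Phi(x,h,\nabla h)$ and observing that the bounds on $|\overline{\nabla}^kU|$ from Remark~\ref{remark} make $\Phi$ fit the hypotheses of the corresponding lemma there. The paper in fact gives even less detail than you do---it simply records the identification of $\Phi$ in the proof of Lemma~\ref{lowerboundsigma_n} and then states that the proof of Lemma~\ref{boundsprincipleradii} ``also comes from \cite{LSYY21}''---so your sketch of the underlying maximum-principle argument with the test function $W$ is an elaboration rather than a departure.
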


						\newpage


					\end{document}